\definecolor{rouge}{rgb}{0.85,0.1,.4}
\definecolor{bleu}{rgb}{0.1,0.2,0.9}
\definecolor{violet}{rgb}{0.7,0,0.8}
\theoremstyle{plain}
\newtheorem{theorem}{Theorem}[section]
\newtheorem{lemma}[theorem]{Lemma}
\newtheorem{coro}[theorem]{Corollary}
\newtheorem{prop}[theorem]{Proposition}
\newtheorem{theo}[theorem]{Theorem}
\theoremstyle{definition}
\newtheorem{defi}[theorem]{Definition}
\theoremstyle{remark}
\newtheorem{claim}[theorem]{Claim}
\def\quest_alpha{\Alph{quest_alpha}}
\def\g{{\mathfrak{g}}}
\def\k{{\Bbbk}}
\def\rg{\ell}
\def\leq{\leqslant}
\def\N{\mathbb{N}}
\def\no{n$^{\circ}$}
\def\poie#1#2#3#4#5#6#7#8#9{\def\un{#5#6#7#8#9}\def\deux{#6#7#8#9}\def\trois{#2#4#8#9}
\def\quatre{#8#9}\def\cinq{#5#6#7}\def\six{#6#7}\def\sept{#2#4}
\ifx\un\empty {#1}_{#2}{#3 \hskip 0.15em}{#1}_{#4} \else \ifx\deux\empty 
{#5}(#1_{#2}){#3 \hskip 0.15em}{#5}(#1_{#4})
\else \ifx\trois\empty {#5}_{#6}(#1){#3 \hskip 0.15em}{#5}_{#7}(#1) 
\else \ifx\quatre\empty {#5}_{#6}(#1_#2){#3 \hskip 0.15em}{#5}_{#7}(#1_#4) 
\else \ifx\cinq\empty {#1}_{#2}^{#8}{#3 \hskip 0.15em}#1_#4^{#9} 
\else \ifx\six\empty {#5}(#1_{#2}^{#8}){#3 \hskip 0.15em}{#5}(#1_{#4}^{#9}) 
\else \ifx\sept\empty {#5}_{#6}^{#8}(#1){#3 \hskip 0.15em}{#5}_{#7}^{#9}(#1) \else
{#5}_{#6}(#1_{#2}^{#8})^{#9}{#3 \hskip 0.15em}{#5}_{#7}(#1_{#4}^{#8})^{#9} 
\fi \fi \fi \fi \fi \fi \fi}
\def\poi#1#2#3#4#5#6#7{\def\un{#5#6#7}\def\deux{#6#7}
\def\trois{#2#4} \def\cinq{#3#4#5}
\ifx\un\empty {#1}_{#2}{#3 \hskip 0.15em}{#1}_{#4} \else
\ifx\deux\empty {#5}(#1_{#2}){#3 \hskip 0.15em}{#5}(#1_{#4}) \else
\ifx\trois\empty {#5}_{#6}(#1){#3 \hskip 0.15em}{#5}_{#7}(#1) \else
{#5_{#6}}(#1_{#2}){#3 \hskip 0.15em}{#5_{#7}}(#1_{#4}) \fi \fi \fi}
\def\rond{\raisebox{.3mm}{\scriptsize$\circ$}}
\def\tens{\raisebox{.3mm}{\scriptsize$\otimes$}}
\def\dv#1#2{\langle {#1}\,,{#2}\rangle}
\def\tk#1#2{{#2}\otimes _{#1}}
\def\ec#1#2#3#4#5{\def\un{#3#4#5}\def\deux{#3#5}\def\trois{#3}
\def\four{#2#4#5}\def\five{#2#5}\def\six{#2}\def\seven{#3#4}
\def\eight{#2#4} \def\nine{#2#3#4}
\ifx\nine\empty {\rm #1}_{#5} \else
\ifx\un\empty {\rm #1}({\goth #2}) \else
\ifx\deux\empty {\rm #1}({\goth #2}_{#4}) \else
\ifx\trois\empty {\rm #1}_{#5}({\goth #2}_{#4}) \else
\ifx\four\empty {\rm #1}(#3) \else
\ifx\five\empty {\rm #1}(#3_{#4}) \else
\ifx\six\empty {\rm #1}_{#5}(#3_{#4}) \else
\ifx\seven\empty {\rm #1}_{#5} ({\goth#2})\else
\ifx\eight\empty {\rm #1}_{#5}({#3})
\fi \fi \fi \fi \fi \fi \fi \fi \fi}
\def\hec#1#2#3#4#5{\def\un{#3#4#5}\def\deux{#3#5}\def\trois{#3}
\def\four{#2#4#5}\def\five{#2#5}\def\six{#2}\def\seven{#3#4}
\def\eight{#2#4} \def\nine{#2#3#4}
\ifx\nine\empty \hat{{\rm #1}}_{#5} \else
\ifx\un\empty \hat{{\rm #1}}({\goth #2}) \else
\ifx\deux\empty \hat{{\rm #1}}({\goth #2}_{#4}) \else
\ifx\trois\empty \hat{{\rm #1}}_{#5}({\goth #2}_{#4}) \else
\ifx\four\empty \hat{{\rm #1}}(#3) \else
\ifx\five\empty \hat{{\rm #1}}(#3_{#4}) \else
\ifx\six\empty \hat{{\rm #1}}_{#5}(#3_{#4}) \else
\ifx\seven\empty \hat{{\rm #1}}_{#5} ({\goth#2})  \else
\ifx\eight\empty \hat{{\rm #1}}_{#5}({#3})
\fi \fi \fi \fi \fi \fi \fi \fi \fi}
\def\e#1#2{\ec {#1}#2{}{}{}}
\def\es#1#2{\ec {#1}{}{#2}{}{}}
\def\ai#1#2#3{\def\deux{#2#3} \def\trois{#3} \def\quatre{#2} 
\ifx\deux\empty \es S{{\goth #1}}^{{\goth #1}} \else
\ifx\trois\empty \es S{{\goth #1}^{#2}}^{{\goth #1}^{#2}} \else
\ifx\quatre\empty \es S{{\goth #1}_{#3}}^{{\goth #1}_{#3}} \else
\es S{{\goth #1}_{#3}^{#2}}^{{\goth #1}_{#3}^{#2}} \fi \fi \fi}
\def\aii#1#2#3#4{\def\deux{#2#3} \def\trois{#3} \def\quatre{#2} 
\ifx\deux\empty \sy {#4}{{\goth #1}}^{{\goth #1}} \else
\ifx\trois\empty \sy {#4}{{\goth #1}^{#2}}^{{\goth #1}^{#2}} \else
\ifx\quatre\empty \sy {#4}{{\goth #1}_{#3}}^{{\goth #1}_{#3}} \else
\sy {#4}{{\goth #1}_{#3}^{#2}}^{{\goth #1}_{#3}^{#2}} \fi \fi \fi}
\def\Bbb{\mathbb}
\def\goth{\mathfrak}
\def\cal{\mathcal}
\def\gi#1#2#3#4{\def\trois{#3#4} \def\quatre{#4}\def\cinq{#3}\ifx\trois\empty {\rm i}_{#1,{\goth #2}}
\else \ifx\quatre\empty {\rm i}_{#1_{#3},{\goth #2}} \else\ifx\cinq\empty {\rm i}_{#1,{\goth #2}_{#4}} \else {\rm i}_{#1_{#3},{\goth #2}_{#4}} \fi \fi \fi}
\def\j#1#2{\def\deux{#2} \ifx\deux\empty {\rm rk}\hskip .125em{{\goth #1}} \else {\rm rk}\hskip .125em{{\goth #1}_{#2}} \fi}
\def\aj#1#2{\def\deux{#2} \ifx\deux\empty {\rm j}_{{\goth #1}} \else {\rm j}_{{\goth #1}_{#2}} \fi}
\def\an#1#2{\def\deux{#2} \ifx\deux\empty {\cal O}_{#1} \else {\cal O}_{#1,#2} \fi }
\def\han#1#2{\def\deux{#2} \ifx\deux\empty {\hat{{\cal O}}}_{#1} \else {\hat{{\cal O}}}_{#1,#2} \fi }
\def\dim{{\rm dim}\hskip .125em}
\def\dd{{\rm d}}
\def\ad{{\rm ad}\hskip .1em}
\def\tr{{\rm tr}\hskip .125em}
\def\deg{{\rm deg}\hskip .125em}
\def\rk{{\rm rk}\hskip .125em}
\def\gm{{\rm G}_{\rm m}}
\def\sy#1#2{{\rm S}^{#1}(#2)}
\def\ie#1{\hskip .125em ^{e}\hskip -.125em{#1}}
\begin{document}

\title[The symmetric invariants]
{The symmetric invariants of centralizers and Slodowy grading II}

\author[Jean-Yves Charbonnel]{Jean-Yves Charbonnel}
\address{Jean-Yves Charbonnel, Universit\'e Paris Diderot - CNRS \\
Institut de Math\'ematiques de Jussieu - Paris Rive Gauche\\
UMR 7586 \\ Groupes, repr\'esentations et g\'eom\'etrie \\
B\^atiment Sophie Germain \\ Case 7012 \\ 
75205 Paris Cedex 13, France}
\email{jean-yves.charbonnel@imj-prg.fr}

\author[Anne Moreau]{Anne Moreau}
\address{Anne Moreau, Laboratoire de Math\'ematiques et Applications\\
T\'el\'eport 2 - BP 30179\\
Boulevard Marie et Pierre Curie\\
86962 Futuroscope Chasseneuil Cedex, France}
\email{anne.moreau@math.univ-poitiers.fr}

\subjclass
{17B35,17B20,13A50,14L24}

\keywords{symmetric invariant, centralizer, polynomial algebra, Slodowy grading}

\date\today

\begin{abstract}
Let $\g$ be a finite-dimensional simple Lie algebra of rank $\rg$ over an 
algebraically closed field $\k$ of characteristic zero, and let 
$(e,h,f)$ be an $\mathfrak{sl}_2$-triple of $\g$. 
Denote by $\g^{e}$ the centralizer of $e$ in $\g$ 
and by $\ai g{e}{}$ the algebra of symmetric invariants of $\g^{e}$. 
We say that $e$ is good if the nullvariety of some $\rg$ homogeneous elements of 
$\ai g{e}{}$ in $({\goth g}^{e})^{*}$ has codimension $\rg$. If $e$ is good then 
$\ai g{e}{}$ is a polynomial algebra. 
In this paper, we prove that the converse of the  main result of \cite{CM1} 
is true. Namely, we prove that $e$ is good if and only if for some
homogeneous generating sequence $\poi q1{,\ldots,}{\rg}{}{}{}$ of $\ai g{}{}$, 
the initial homogeneous components of their restrictions to $e+\g^{f}$ are algebraically 
independent over~$\k$. 
\end{abstract}

\maketitle

\tableofcontents

\section{Introduction} \label{i}

\subsection{}\label{i1}
Let $\g$ be a finite-dimensional simple Lie algebra of rank $\rg$ over an 
algebraically closed field $\k$ of characteristic zero, let $\dv ..$ be the Killing form 
of $\g$ and let $G$ be the adjoint group of $\g$. If ${\goth a}$ is a subalgebra of $\g$,
we denote by S$({\goth a})$ the symmetric algebra of ${\goth a}$. For $x\in\g$, we 
denote by $\g^{x}$ the centralizer of $x$ in $\g$ and by $G^{x}$ the stabilizer of $x$ 
in $G$. Then 
${\rm Lie}(G^{x})={\rm Lie}(G^{x}_{0})=\g^x$ where $G^{x}_{0}$ is the 
identity component of $G^{x}$.  Moreover, $\es S{\g^{x}}$ is a $\g^x$-module and 
$\ai gx{}=\es S{\g^{x}}^{G^{x}_{0}}$. 

In \cite{CM1}, we continued the works of \cite{PPY} 
and we studied the question on whether the algebra 
$\es S{\g^{x}}^{\g^{x}}$ is polynomial in $\rg$ variables; 
see \cite{Y3,CM,JS,Y4} for other references related to the topic. 

\subsection{}
Let us first summarize the main results of \cite{CM1}. 
\begin{defi}[{\cite[Definition 1.3]{CM1}}] \label{di}
An element $x\in{\goth g}$ is called a {\em good element of ${\goth g}$} if for some 
homogeneous sequence $(\poi p1{,\ldots,}{\rg}{}{}{})$ in $\ai g{x}{}$, the nullvariety of 
$\poi p1{,\ldots,}{\rg}{}{}{}$ in $({\goth g}^{x})^{*}$ has codimension $\rg$ in 
$({\goth g}^{x})^{*}$. 
\end{defi}
Thus an element $x\in{\goth g}$ is good if the nullcone of $\es S{\g^{x}}$,  
that is, the nullvariety in $({\goth g}^{x})^{*}$ of the augmentation ideal 
$\ai g{x}{}_+$ of $\ai g{x}{}$, is a complete intersection in $({\goth g}^{x})^{*}$ 
since the transcendence degree over $\k$ of the fraction 
field of $\ai g{x}{}$ is~$\ell$ by the main result of \cite{CM}.  

For example, regular nilpotent elements are good; see the introduction of \cite{CM1} 
for more details and other examples. 

\begin{theorem}[{\cite[Theorem 3.3]{CM1}}] \label{ti3}
Let $x$ be a good element of ${\goth g}$. Then $\ai gx{}$ is a polynomial algebra and 
$\es S{{\goth g}^{x}}$ is a free extension of $\ai gx{}$. 
\end{theorem}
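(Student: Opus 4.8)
The plan is to set $A:=\es S{\g^{x}}$, which is a graded polynomial algebra (grade $\g^{x}$ in degree one), and to regard $C:=\ai gx{}$ as a graded subalgebra, exploiting the good sequence $\poi p1{,\ldots,}{\rg}{}{}{}\in C_{+}$ whose nullvariety in $(\g^{x})^{*}$ has codimension $\rg$; recall that by the main result of \cite{CM} the fraction field of $C$ has transcendence degree $\rg$ over $\k$. First I would prove that $\poi p1{,\ldots,}{\rg}{}{}{}$ are algebraically independent. The morphism $\pi=(\poi p1{,\ldots,}{\rg}{}{}{})\colon (\g^{x})^{*}\to\k^{\rg}$ has a zero-fibre $\pi^{-1}(0)$ of dimension $\dim\g^{x}-\rg$; by the theorem on the dimension of the fibres of a morphism, $\dim\overline{\pi((\g^{x})^{*})}\geq\rg$, so $\pi$ is dominant and the $p_{i}$ are algebraically independent. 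Set $B:=\k[\poi p1{,\ldots,}{\rg}{}{}{}]$, a graded polynomial subalgebra of $C$.

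Next, since $A$ is a polynomial algebra it is Cohen--Macaulay, and the ideal generated by $\poi p1{,\ldots,}{\rg}{}{}{}$ has height $\rg$, the codimension of its zero set; in a Cohen--Macaulay ring an ideal of height $\rg$ generated by $\rg$ elements is generated by a regular sequence, so $\poi p1{,\ldots,}{\rg}{}{}{}$ is $A$-regular. Because $B$ is polynomial, the Koszul complex on $\poi p1{,\ldots,}{\rg}{}{}{}$ resolves $\k=B/B_{+}$, whence $\mathrm{Tor}^{B}_{i}(\k,A)$ is computed by the Koszul homology of the $p_{i}$ acting on $A$; by $A$-regularity it vanishes for $i>0$. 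Thus $A$ is $B$-flat, and being graded with $A_{0}=\k$, it is a free $B$-module.

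The decisive step is the passage from $B$ to $C$. Here I would first observe that $C$ is normal: an element of $A$ lying in $\mathrm{Frac}(C)$ is a quotient of invariants, hence $G^{x}_{0}$-invariant, so $C=A\cap\mathrm{Frac}(C)$ inside $\mathrm{Frac}(A)$, and normality of $A$ forces that of $C$. Then I would show that $\poi p1{,\ldots,}{\rg}{}{}{}$ is a homogeneous system of parameters of $C$, i.e.\ $\sqrt{(\poi p1{,\ldots,}{\rg}{}{}{})C}=C_{+}$: a positive-dimensional component of the common zeros of the $p_{i}$ in $\mathrm{Spec}(C)$ would pull back, through the generic fibre of dimension $\dim\g^{x}-\rg$, to a subset of $(\g^{x})^{*}$ of dimension exceeding $\dim\g^{x}-\rg$, contradicting the codimension hypothesis. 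Graded Nakayama then promotes $\dim_{\k}C/(\poi p1{,\ldots,}{\rg}{}{}{})C<\infty$ to the statement that $C$ is a finitely generated $B$-module, in particular a finitely generated $\k$-algebra. Once $C$ is known to be minimally generated by $\rg$ homogeneous elements $\poi r1{,\ldots,}{\rg}{}{}{}$, it is forced to be polynomial, since a surjection of $\k[\poi r1{,\ldots,}{\rg}{}{}{}]$ onto the domain $C$, both of Krull dimension $\rg$, must be an isomorphism. The generators then satisfy $(\poi r1{,\ldots,}{\rg}{}{}{})=C_{+}$, so their common zero set in $(\g^{x})^{*}$ is the whole nullcone, again of codimension $\rg$; rerunning the Cohen--Macaulay height argument and the Koszul computation with $\poi r1{,\ldots,}{\rg}{}{}{}$ in place of $\poi p1{,\ldots,}{\rg}{}{}{}$ shows $A$ is a free $C$-module, the asserted free extension.

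The main obstacle is concentrated in this last step, because $G^{x}$ is in general not reductive: there is no Reynolds operator, so neither the finite generation of $C$, nor the control of its minimal number of generators is automatic. The genuinely hard points are to establish that the good sequence is a system of parameters of the a priori badly behaved algebra $C$ and to pin down that $C$ needs exactly $\rg$ generators; I expect the latter to require a Hilbert-series comparison, using the identity $H_{A}=H_{B}\,H_{A/(\poi p1{,\ldots,}{\rg}{}{}{})A}$ afforded by $B$-freeness together with its $C$-analogue to force the numerator of $H_{C}$ to be trivial. Everything surrounding this is formal commutative algebra.
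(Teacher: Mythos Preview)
This theorem is not proven in the present paper; it is quoted verbatim from \cite[Theorem~3.3]{CM1}, so there is no in-paper proof to compare against. I therefore comment on the internal soundness of your sketch.

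The opening moves are correct: the $p_{i}$ are algebraically independent, they form a regular sequence in the Cohen--Macaulay ring $A=\es S{\g^{x}}$, and $A$ is graded-free over $B=\k[\poi p1{,\ldots,}{\rg}{}{}{}]$. Normality of $C=\ai gx{}$ also goes through as you describe.

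Two gaps remain, and you are right that they cluster around the passage from $B$ to $C$. The first is your claim that $\poi p1{,\ldots,}{\rg}{}{}{}$ is a system of parameters of $C$. Your fibre-dimension argument implicitly assumes that every prime of $C$ containing the $p_{i}$ lifts to a prime of $A$; but $A$ is not integral over $C$ (their Krull dimensions are $r$ and $\rg$), and at this point $C$ is not even known to be Noetherian, so lying-over is not available. One needs a separate device to establish $\sqrt{B_{+}C}=C_{+}$ and hence finite generation of $C$.

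The second and more serious gap is the polynomiality of $C$. The Hilbert-series comparison you propose is circular: writing $H_{A}=H_{C}\cdot H_{A/C_{+}A}$ requires $A$ to be free over $C$, and by the standard criterion (finite Cohen--Macaulay over a base) that needs $C$ to be regular---precisely the conclusion sought. Even if one grants that $C$ is finite free over $B$ (hence Cohen--Macaulay), a graded normal Cohen--Macaulay domain of dimension $\rg$ is by no means forced to be polynomial; the numerator of $H_{C}$ relative to $H_{B}$ can perfectly well be a nontrivial polynomial with nonnegative coefficients. Thus nothing in your outline pins down the minimal number of generators of $C$ as $\rg$. The argument in \cite{CM1} supplies an additional ingredient to close this; your sketch, as written, does not.
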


An element $x$ is good if and only if so is its nilpotent 
component in the Jordan decomposition \cite[Proposition 3.5]{CM1}.  
As a consequence, we can restrict the study to the case of nilpotent elements. 

Let $e$ be a nilpotent element of $\g$. 
By the Jacobson-Morosov Theorem, $e$ is embedded into an 
${\goth {sl}}_2$-triple 
$(e,h,f)$ of $\g$. 
Identify $\g$ with $\g^*$, and 
${\goth g}^{f}$ with $({\g}^{e})^*$, through the Killing isomorphism 
$\g \to \g^*, \, x \mapsto \dv{x}{.}$. 
Thus we have the following algebra 
isomorphisms: $\e Sg \simeq\k[\g^*]\simeq\k[\g]$ 
and $\es S{{\goth g}^{e}}\simeq\k[(\g^{e})^*]\simeq\k[\g^{f}]$. 
Denote by ${\cal S}_e:=e+\g^{f}$ the  
{\em Slodowy slice associated  with $e$}, and let 
$T_e \colon \g \to \g, \, x \mapsto e+x$ be the translation 
map. It induces an isomorphism of affine varieties $\g^{f} \simeq 
{\cal S}_e$, and 
the comorphism $T_e^{*}$ induces an isomorphism 
between the coordinate algebras $\k[{\cal S}_e]$ and $\k[\g^{f}]$. 

Let $p$ be a homogeneous element of $\e Sg \simeq\k[\g]$. 
Then its restriction to ${\cal S}_e$ is an element of 
$\k[{\cal S}_e] \simeq \k[\g^{f}] \simeq \es S{{\goth g}^{e}}$ 
through the above isomorphisms. For $p$ in $\e Sg$, we denote by $\kappa (p)$ its 
restriction to ${\cal S}_e$ so that $\kappa(p) \in {\rm S}(\g^{e})$. Denote by $\ie{p}$ 
the initial homogeneous component of $\kappa (p)$. 
According to~\cite[Proposition~0.1]{PPY}, if $p$ is in $\ai g{}{}$, then $\ie{p}$ is in 
$\ai ge{}{}$. 

\begin{theorem}[{\cite[Theorem 1.5]{CM1}}]  \label{ti4}
Suppose that for some homogeneous generators $\poi q1{,\ldots,}{\rg}{}{}{}$ of 
$\ai g{}{}$, the polynomial functions $\poi {\ie q}1{,\ldots,}{\rg}{}{}{}$ are 
algebraically independent over $\k$. Then $e$ is a good element of ${\goth g}$. In 
particular, $\ai ge{}$ is a polynomial algebra and $\es S{{\goth g}^{e}}$ is a free 
extension of $\ai ge{}$. Moreover, $\poi {\ie q}1{,\ldots,}{\rg}{}{}{}$ is a regular 
sequence in $\es S{{\goth g}^{e}}$.
\end{theorem}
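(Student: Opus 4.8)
The plan is to verify that $e$ is good by taking the explicit sequence $\ie{q}_1,\dots,\ie{q}_{\rg}$ and then to read off the remaining assertions from Theorem~\ref{ti3}. By the result of \cite{PPY} recalled above, each $\ie{q}_i$ is a homogeneous element of $\ai g{e}{}$, so in view of Definition~\ref{di} it suffices to prove that the nullvariety $\mathcal{Z}:=V(\ie{q}_1,\dots,\ie{q}_{\rg})$ in $(\g^e)^{*}$ has codimension $\rg$. By Krull's height theorem this codimension is at most $\rg$, so the whole content is the reverse bound $\dim\mathcal{Z}\leq\dim\g^{e}-\rg$. Once it is established, $e$ is good by Definition~\ref{di}; Theorem~\ref{ti3} then yields that $\ai g{e}{}$ is a polynomial algebra and that $\es S{\g^{e}}$ is a free extension of it; and, since $\es S{\g^{e}}$ is a polynomial algebra, hence Cohen--Macaulay, the equality of codimension forces $\ie{q}_1,\dots,\ie{q}_{\rg}$ to be a regular sequence, giving the last assertion.

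The geometric input I would use is Kostant's transversality on the Slodowy slice. Under the identifications $\es S{\g^{e}}\simeq\k[\g^{f}]\simeq\k[{\cal S}_e]$, the common zeros of the full restrictions $\kappa(q_1),\dots,\kappa(q_{\rg})$ are exactly ${\cal S}_e\cap\mathcal{N}$, where $\mathcal{N}$ is the nilpotent cone of $\g$; because the adjoint quotient is flat in restriction to ${\cal S}_e$, this intersection is equidimensional of dimension $\dim\g^{e}-\rg$. Now $\ie{q}_i$ is the initial component of $\kappa(q_i)$, that is, its leading term for the contracting $\k^{*}$-action on ${\cal S}_e$ defining the Slodowy grading. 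Consequently $\mathcal{Z}$ contains the tangent cone of ${\cal S}_e\cap\mathcal{N}$ at $e$, which is cut out by the initial ideal and has dimension $\dim\g^{e}-\rg$. This reproves the lower bound, but, more importantly, it identifies $\mathcal{Z}$ as a degeneration of ${\cal S}_e\cap\mathcal{N}$, the variety whose dimension we already control.

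The main obstacle is precisely the upper bound $\dim\mathcal{Z}\leq\dim\g^{e}-\rg$. Algebraic independence of the $\ie{q}_i$ alone does not suffice: for $\rg$ homogeneous functions it only guarantees that the associated morphism $(\g^{e})^{*}\to\k^{\rg}$ is dominant, and the fibre over $0$ may a priori be larger than the generic fibre, since passing to initial forms can strictly increase the dimension of a common zero locus. The extra structure I would exploit is $G^{e}_{0}$-invariance. Every irreducible component $Z$ of $\mathcal{Z}$ is stable under the coadjoint action and under the contracting $\k^{*}$-action, and, because the index of $\g^{e}$ equals $\rg$, every coadjoint orbit in $(\g^{e})^{*}$ has dimension at most $\dim\g^{e}-\rg$; hence a component of dimension exceeding $\dim\g^{e}-\rg$ would necessarily carry non-constant invariants. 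I would rule this out by using that $\ai g{e}{}$ has transcendence degree $\rg$, so every homogeneous invariant is algebraically dependent on the algebraically independent family $\ie{q}_1,\dots,\ie{q}_{\rg}$; a homogeneous such relation, evaluated where all $\ie{q}_i$ vanish, should force the invariant itself to vanish on $Z$, whence $Z$ lies in the full nullcone of $\ai g{e}{}$, and the degeneration of ${\cal S}_e\cap\mathcal{N}$ bounds $\dim Z$ by $\dim\g^{e}-\rg$.

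Equivalently, the crux is to prove that $\kappa(q_1),\dots,\kappa(q_{\rg})$ is a \emph{standard basis} of the ideal it generates, so that $\mathcal{Z}$ coincides with the tangent cone of the previous paragraph rather than strictly containing it. Converting the algebraic independence of the initial components into this standard-basis property — by means of the invariance under $G^{e}_{0}$ and the flatness of the adjoint quotient along ${\cal S}_e$ — is where I expect the real difficulty to lie; everything else is then formal, the regular-sequence statement following from the codimension equality together with the Cohen--Macaulayness of $\es S{\g^{e}}$.
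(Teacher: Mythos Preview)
First, note that Theorem~\ref{ti4} is not proved in the present paper: it is quoted from \cite{CM1} as the input that the current paper upgrades to an equivalence. So there is no proof here to compare against directly; what one can compare against is the machinery of \cite{CM1} that is recalled and reused in Section~\ref{sg}.

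Your outline correctly locates the issue: everything reduces to showing $\dim\mathcal{Z}\le r-\rg$ for $\mathcal{Z}=V(\ie q_1,\dots,\ie q_\rg)$, and you rightly observe that algebraic independence of the $\ie q_i$ alone does not yield this. But the argument you propose to close the gap does not work, for two reasons.

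First, the step ``a homogeneous such relation, evaluated where all $\ie q_i$ vanish, should force the invariant itself to vanish on $Z$'' needs \emph{integrality} of the invariant $f$ over $\k[\ie q_1,\dots,\ie q_\rg]$, not mere algebraic dependence. For a monic homogeneous relation $f^{n}+\sum_{j<n}a_j(\ie q)\,f^{j}=0$ with each $a_j$ of positive degree, evaluation on $\mathcal{Z}$ indeed gives $f^{n}|_{\mathcal Z}=0$; but for a non-monic relation the leading coefficient may itself lie in the ideal $(\ie q_1,\dots,\ie q_\rg)$, and you learn nothing on $\mathcal Z$. You have not shown that $\ai g e{}$ is integral over $\k[\ie q_1,\dots,\ie q_\rg]$, so this step is unjustified.

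Second, even granting that every element of $\ai g e{}_+$ vanishes on $\mathcal Z$, your final sentence bounds $\dim Z$ by appealing to ``the degeneration of $\mathcal S_e\cap\mathcal N$''. But that degeneration \emph{is} $\mathcal Z$, so this is circular: what you would really be using is that the nullcone of $\ai g e{}$ has dimension at most $r-\rg$, which is precisely ``$e$ is good''. Similarly, the claim that a $G^{e}_0$-stable irreducible cone of dimension $>r-\rg$ must support a non-constant \emph{polynomial} invariant restricted from $\ai g e{}$ is not justified: Rosenlicht gives non-constant rational invariants on $Z$, but promoting these to restrictions of elements of $\ai g e{}$ requires an argument you do not supply.

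The proof in \cite{CM1} bypasses these issues with a one-parameter degeneration rather than an invariant-theoretic short-cut. In the notation of Section~\ref{sg}, one forms the algebra $A\subset\es S{\g^{e}}[t]$ interpolating between $\kappa(\ai g{}{})$ at $t=1$ and the algebra generated by the $\ie q_i$ at $t=0$. Under your hypothesis, Proposition~\ref{psg1}(i) gives $A=R[Q_1,\dots,Q_\rg]$, a polynomial algebra, so the total family $\mathcal V=V(A_+)\subset\g^{f}\times\k$ is cut out by $\rg$ equations; its part $\mathcal V_*$ over $t\neq 0$ is a $\k^{*}$-family of copies of $\mathcal S_e\cap\mathcal N$ and is equidimensional of dimension $r+1-\rg$ (Lemma~\ref{lsg2}, i.e.\ \cite[Corollary~4.4]{CM1}). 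The substantive step carried out in \cite{CM1} is to show that $\mathcal V$ has no irreducible component contained in $\{t=0\}$; this uses the algebraic independence of the $\ie q_i$ in an essential way. Once that is known, the fibre at $t=0$ has dimension $\le r-\rg$, and the remaining assertions follow exactly as you say. In short, your diagnosis of where the difficulty lies is accurate, and your final paragraph even names it; but the coadjoint-orbit argument you sketch does not close it, and a genuine flatness/degeneration argument is what is actually required.
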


In other words, Theorem \ref{ti4} provides a sufficient condition for that  
$\ai ge{}$ is polynomial. By \cite{PPY}, one knows that  for homogeneous 
elements $q_1,\ldots,q_\ell$ of  
$\ai g{}{}$, the polynomial functions $\ie{q_1},\ldots,\ie{q_\ell}$ are algebraically 
independent if and only if 
\begin{align} \label{eq:alg_indep}
\sum_{i=1}^{\ell} \deg \ie{q_i}=\frac{\dim\g^{e}+\ell}{2}.
\end{align}
So we have a practical criterion to verify the sufficient condition of Theorem \ref{ti4}. 
However, even if the condition of Theorem \ref{ti4} holds, that is, 
if (\ref{eq:alg_indep}) holds, 
$\ai ge{}$ is not necessarily generated by the polynomial functions 
$\poi {\ie q}1{,\ldots,}{\rg}{}{}{}$. 
As a matter of fact, there are nilpotent elements $e$ satisfying this condition 
and for which ${\rm S}(\g^{e})^{\g^{e}}$ is not generated by some 
$\poi {\ie q}1{,\ldots,}{\rg}{}{}{}$, for any 
choice of homogeneous generators $\poi q1{,\ldots,}{\rg}{}{}{}$ of 
$\ai g{}{}$ (cf.~\cite[Remark 2.25]{CM1}). 

Theorem \ref{ti4} can be applied to a great number of nilpotent orbits in the simple 
classical Lie algebras, and for some nilpotent orbits in the 
exceptional Lie algebras, see \cite[Sections 5 and 6]{CM1}. 
We also provided in \cite[Example 7.8]{CM1} an example of a nilpotent element $e$ 
for which ${\rm S}(\g^{e})^{\g^{e}}$ is not polynomial, with $\g$ of type D$_7$. 

\subsection{} In this note, we prove that the converse of Theorem \ref{ti4} 
also holds. Namely, our main result is the following theorem. 

\begin{theo}\label{ti5}
The nilpotent element $e$ of ${\goth g}$ is good if and only if for some
homogeneous generating sequence $\poi q1{,\ldots,}{\rg}{}{}{}$ of $\ai g{}{}$, 
the elements $\poi {\ie q}1{,\ldots,}{\rg}{}{}{}$ are algebraically independent over $\k$.
\end{theo}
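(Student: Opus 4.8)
The ``if'' implication is exactly Theorem~\ref{ti4}, so I concentrate on the converse: assuming $e$ good, I must construct a homogeneous generating sequence $\poi q1{,\ldots,}{\rg}{}{}{}$ of $\ai g{}{}$ whose initial components $\poi {\ie q}1{,\ldots,}{\rg}{}{}{}$ are algebraically independent, equivalently, by \eqref{eq:alg_indep}, one for which $\sum_{i=1}^{\ell}\deg\ie{q_i}=(\dim\g^{e}+\ell)/2$. Two facts frame the problem. First, the restrictions $\kappa(q_1),\ldots,\kappa(q_\ell)$ are \emph{always} algebraically independent in $\es S{\g^{e}}$: the adjoint quotient restricts to a dominant (in fact flat) morphism $\mathcal S_{e}\to\g/\!/G\simeq\k^{\ell}$, and the $\kappa(q_i)$ are the pullbacks of the coordinate functions. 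Hence the whole difficulty is concentrated in the passage from $\kappa(q_i)$ to its initial component $\ie{q_i}$, an operation that is multiplicative but not additive and so may destroy independence. Second, since $e$ is good, Theorem~\ref{ti3} makes $\ai ge{}$ a polynomial algebra $\k[r_1,\ldots,r_\ell]$ and $\es S{\g^{e}}$ a free $\ai ge{}$-module.

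I would then transport the goal inside the invariant ring. Because $\ie{q_i}\in\ai ge{}$ and $\ai ge{}$ is a polynomial algebra of Krull dimension $\ell$, the elements $\poi {\ie q}1{,\ldots,}{\rg}{}{}{}$ are algebraically independent if and only if they form a homogeneous system of parameters of $\ai ge{}$, that is, if and only if their common nullvariety in $(\g^{e})^{*}$ equals the nullcone $\mathcal N_{e}=V(\ai ge{}_+)$. Goodness enters through freeness: the quotient morphism $(\g^{e})^{*}\to{\rm Spec}\,\ai ge{}=\k^{\ell}$ is then flat, with equidimensional fibres, so $\mathcal N_{e}$ has codimension exactly $\ell$ and every homogeneous system of parameters of $\ai ge{}$ pulls back to a regular sequence cutting out $\mathcal N_{e}$. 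Since $\mathcal N_{e}\subseteq V(\ie{q_1},\ldots,\ie{q_\ell})$ for any choice of generators, it suffices to arrange $V(\ie{q_1},\ldots,\ie{q_\ell})=\mathcal N_{e}$; the problem thus becomes the internal one of choosing the $q_i$ so that their initial components cut out the irrelevant ideal of $\k[r_1,\ldots,r_\ell]$ up to radical.

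To produce such a sequence I would use the freeness of $\es S{\g^{e}}$ over $\ai ge{}$ together with the two compatible gradings carried by the Slodowy slice: the standard grading and the Kazhdan grading, for which each $\kappa(q_i)$ is homogeneous, so that $\ie{q_i}$ is precisely its lowest standard-degree (equivalently, deepest $\ad h$-weight) bihomogeneous component. The idea is to pass to associated graded algebras and show that freeness descends, pinning the degrees $\deg\ie{q_i}$ through the bigraded Hilbert series of $\es S{\g^{e}}$, of $\ai ge{}$ and of the free complement. In parallel, I would run a modification scheme: using the multiplicativity $\ie{q^{\alpha}}=\prod_i\ie{q_i}^{\alpha_i}$, a nontrivial homogeneous relation among the $\ie{q_i}$ is carried to an invariant $P=\sum_{\alpha}c_{\alpha}q^{\alpha}$ on whose restriction to $\mathcal S_{e}$ the lowest-degree terms cancel, and one substitutes $P$ for a generator so as to strictly improve the position of the initial components; freeness is what guarantees the substitution is admissible, and the scheme terminates with algebraically independent $\ie{q_i}$.

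The main obstacle --- and the reason this converse genuinely goes beyond Theorem~\ref{ti4} --- is precisely this last point: one must show that every algebraic relation among the initial components can be eliminated by an admissible change of homogeneous generators, i.e.\ that the codimension-$\ell$ property of $\mathcal N_{e}$ transfers from the flat family $\kappa(q_i)$ to the initial-component family $\ie{q_i}$. This transfer is not formal, since taking initial forms only yields ${\rm codim}\,V(\ie{q_1},\ldots,\ie{q_\ell})\le\ell$, with strict inequality for every choice when $e$ is not good. The freeness of $\es S{\g^{e}}$ over $\ai ge{}$ furnished by Theorem~\ref{ti3} is exactly what controls the degree drop in the Kazhdan bigrading and forces the initial forms to remain a regular sequence; making this control quantitative, presumably via the Hilbert-series identity issuing from freeness, is where the substance of the argument lies.
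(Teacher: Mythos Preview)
Your reduction is on target: after invoking Theorem~\ref{ti4} for one direction, the converse amounts to producing homogeneous generators $q_1,\ldots,q_\ell$ of $\ai g{}{}$ whose initial forms cut out exactly the nullcone $\mathcal N_e$, and you correctly observe that goodness, via Theorem~\ref{ti3}, makes $\ai ge{}$ polynomial and $\mathcal N_e$ of codimension $\ell$, so that ``algebraically independent'' becomes ``homogeneous system of parameters''. The paper establishes the analogue of your containment $\mathcal N_e\subseteq V(\ie{q_1},\ldots,\ie{q_\ell})$ at the level of the whole initial-form algebra: denoting by $A_0$ the subalgebra generated by all $\ie p$, $p\in\ai g{}{}$, it shows (Lemma~\ref{lproof}, Corollary~\ref{cproof}) via an integral-closure and Zariski's-main-theorem argument that the nullvariety $\mathcal N_0$ of $A_{0,+}$ coincides with $\mathcal N_e$ when $e$ is good. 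This is close in spirit to your first step.

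The genuine gap is your second step. Your ``modification scheme'' is only a heuristic: a homogeneous relation $\sum_\alpha c_\alpha\,\ie q^\alpha=0$ need not be linear in the generators, so there is no evident admissible substitution $q_i\mapsto P$ that strictly improves a well-founded invariant, and you yourself concede that ``making this control quantitative \ldots\ is where the substance of the argument lies''. Nothing in freeness of $\es S{\g^{e}}$ over $\ai ge{}$ by itself forces initial forms of a generating set to remain a regular sequence; what is needed is a structural statement about the \emph{family} of all initial forms, not an ad hoc repair of a single bad choice.

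The paper supplies that structure by a one-parameter degeneration rather than by modification. It forms the Rees-type subalgebra $A\subset \es S{\g^{e}}[t]$ generated (after localisation in $t$) by $\tau\!\circ\!\kappa(\ai g{}{})$, so that $A[t^{-1}]$ is polynomial and $\varepsilon(A)=A/tA$ is the associated graded of $\kappa(\ai g{}{})$ for the standard filtration (Lemma~\ref{lsg1}). The key step is Theorem~\ref{ti6}: if the special fibre $\mathcal N=V(\varepsilon(A)_+)$ has dimension $r-\ell$ (which the previous paragraph guarantees from goodness), then $A$ is Cohen--Macaulay, and a general commutative-algebra result (Proposition~\ref{pgf2}) forces the localisation $A_*$ to be polynomial over $R_*$, with generators of the correct Slodowy degrees $2d_1,\ldots,2d_\ell$. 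Corollary~\ref{csg1} then unwinds these generators as $t^{-2d_i}\tau\!\circ\!\kappa(q_i)$ for a genuine homogeneous generating sequence $q_1,\ldots,q_\ell$ of $\ai g{}{}$, and evaluating at $t=0$ gives $\varepsilon(A)=\k[\ie q_1,\ldots,\ie q_\ell]$ with $\ie q_i$ algebraically independent. In short, the paper replaces your termination problem by a Cohen--Macaulayness/flat-degeneration argument that simultaneously controls all initial forms; your Hilbert-series intuition resurfaces there (Lemma~\ref{lgf2}(iii), Lemma~\ref{lsg1}(iii)--(iv)), but as a consequence of the structure of $A$, not as a stand-alone tool.
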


Theorem \ref{ti5} was conjectured in \cite[Conjecture 7.11]{CM1}. 
Notice that it may happen that for some $r_1,\ldots,r_\ell$ in $\ai g{}{}$, 
the elements $\poi {\ie r}1{,\ldots,}{\rg}{}{}{}$ are algebraically independent over 
$\k$, and that however $e$ is not good. This is the case for instance for the nilpotent 
elements in $\mathfrak{so}(\k^{12})$ associated with the partition $(5,3,2,2)$, 
cf.~\cite[Example 7.6]{CM1}. In fact, according to \cite[Corollary 2.3]{PPY}, for any 
nilpotent element $e$ of $\g$, there exist $\poi r1{,\ldots,}{\rg}{}{}{}$ in $\ai g{}{}$ 
such that $\poi {\ie r}1{,\ldots,}{\rg}{}{}{}$ are algebraically independent over $\k$.  
So the assumption that $\poi q1{,\ldots,}{\rg}{}{}{}$ generate $\ai g{}{}$ 
is crucial. 

\subsection{} \label{ss:plan}
We introduce in this subsection the main notations of the paper 
and we outline our strategy to prove Theorem \ref{ti5}. 

First of all, recall that ${\goth g}^{f}$ identifies with the dual of ${\goth g}^{e}$ 
through 
the Killing isomorphism so that $\es S{{\goth g}^{e}}$ is the algebra 
$\k[\g^{f}]$ of polynomial functions on ${\goth g}^{f}$, and 
that $\k[\g^{f}]$ identifies with the coordinate algebra 
of the Slodowy slice ${\cal S}_e=e+{\goth g}^{f}$.

Let $\poi x1{,\ldots,}{r}{}{}{}$ be a basis of ${\goth g}^{e}$ such that for 
$i=1,\ldots,r$, $[h,x_{i}] = n_{i}x_{i}$ with $n_{i}$ a nonnegative integer. For 
${\bf j}=(\poi j1{,\ldots,}{r}{}{}{})$ in ${\Bbb N}^{r}$, set:
$$ \vert {\bf j} \vert := \poi j1{+\cdots +}{r}{}{}{}, \quad 
\vert {\bf j} \vert _{e} := j_{1}(n_{1}+2) + \cdots + j_{r}(n_{r}+2), \quad 
x^{{\bf j}} := \poie x1{\cdots }{r}{}{}{}{j_{1}}{j_{r}}.$$
There are two gradings on $\es S{{\goth g}^{e}}$: the 
standard one and the Slodowy grading. For all ${\bf j}$ in ${\Bbb N}^{r}$, 
$x^{{\bf j}}$ has standard degree $\vert {\bf j} \vert$ and, by 
definition, it has Slodowy degree 
$\vert {\bf j} \vert_{e}$. Denoting by $t\mapsto \rho (t)$ 
the one-parameter subgroup of 
$G$ generated by $\ad h$, the Slodowy slice $e+{\goth g}^{f}$ is invariant 
under the one-parameter subgroup $t \mapsto t^{-2}\rho (t)$ of 
$G$. Hence the one-parameter subgroup $t\mapsto t^{-2}\rho (t)$ 
induces an action on $\k[{\cal S}_e]$. 
Let $j \in\{1,\ldots,r\}$, $y$ in ${\goth g}^{f}$ and $t$ in $\k^{*}$. 
Viewing the element $x_j$ of $\g^{e} \subset \es S{{\goth g}^{e}}$ 
as an element $\k[{\cal S}_e]$, we have:
$$x_{j}(t^{-2}\rho (t)(e+y)) = x_{j}(e+t^{-2}\rho (t)(y)) = 
t^{-2}\rho (t^{-1})(x_{j})(e+y) = t^{-2-n_{j}}x_{j}(e+y) ,$$  
whence for all ${\bf j}$ in ${\Bbb N}^{r}$ and for all $y$ in ${\goth g}^{f}$, 
$$ x^{{\bf j}}(t^{-2}\rho(t)(e+y)) = t^{-\vert {\bf j} \vert_{e}} x^{{\bf j}}(e+y) .$$
This means that $x^{{\bf j}}$, as a regular function on ${\cal S}_e$, 
is homogeneous of degree $\vert {\bf j} \vert_{e}$ for the Slodowy grading. 

Let $t$ be an indeterminate and let $R$ be the polynomial algebra $\k[t]$. The 
polynomial algebra 
$$\es S{{\goth g}^{e}}[t] :=\tk \k{\k[t]}\es S{{\goth g}^{e}}$$ 
identifies with the algebra of polynomial 
functions on ${\goth g}^{f}\times \k$. The grading of $\es S{{\goth g}^{e}}$ induces 
a grading of $\es S{{\goth g}^{e}}[t]$ such that $t$ has degree $0$. 
Denote by $\varepsilon $ the evaluation map at $t=0$ so that $\varepsilon $ is a graded 
morphism from $\es S{{\goth g}^{e}}[t]$ onto $\es S{{\goth g}^{e}}$. 
Let $\tau $ be the embedding of $\es S{{\goth g}^{e}}$ into $\es S{{\goth g}^{e}}[t]$ 
such that $\tau (x_{i}):=tx_{i}$ for $i=1,\ldots,r$. 

Recall that for $p$ in $\e Sg$, 
$\kappa (p)$ denotes the restriction to ${\cal S}_e$ 
of $p$ so that $\kappa(p) \in {\rm S}(\g^{e})$. 
Denote by $A$ the intersection 
of  $\es S{{\goth g}^{e}}[t]$ with the sub-$\k[t,t^{-1}]$-module of
$$\es S{{\goth g}^{e}}[t,t^{-1}]:= \tk \k{\k[t,t^{-1}]}\es S{{\goth g}^{e}}$$ 
generated by $\tau \rond \kappa(\e Sg^\g)$, 
and let $A_+$ be its augmentation ideal. 
Let ${\cal V}$ be the nullvariety of $A_{+}$ in ${\goth g}^{f}\times \k$ 
and ${\cal V}_{*}$ the union of the irreducible 
components of ${\cal V}$ which are not contained in ${\goth g}^{f}\times \{0\}$.   
Let ${\cal N}$ be the nullvariety of $\varepsilon (A)_{+}$ in ${\goth g}^{f}$, 
with $\varepsilon (A)_+$ the augmentation ideal of $\varepsilon (A)$. 
Then ${\cal V}$ is the union of ${\cal V}_{*}$ and ${\cal N}\times \{0\}$. 

The properties of the varieties ${\cal V}$ and ${\cal V}_{*}$ allow us  
to prove the following result. 

\begin{theo}\label{ti6}
Suppose that ${\cal N}$ has dimension $r-\rg$. Then for some homogeneous generating 
sequence $\poi q1{,\ldots,}{\rg}{}{}{}$ of $\ai g{}{}$, the elements 
$\poi {\ie q}1{,\ldots,}{\rg}{}{}{}$ are algebraically independent over $\k$.
\end{theo}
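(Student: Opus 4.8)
The plan is to identify the graded algebra $\varepsilon (A)$ with the subalgebra of $\es S{\g^{e}}$ generated by the initial components $\ie{p}$, $p\in\ai g{}{}$, and then to show that the dimension hypothesis forces this algebra to be polynomial on generators that are the initial components of a suitable generating sequence of $\ai g{}{}$. First I would record the behaviour of $\kappa$ and of $\ie{\cdot}$ on invariants. If $p\in\ai g{}{}$ is homogeneous of degree $d$, then, arguing as in \S\ref{ss:plan} with the one-parameter subgroup $t\mapsto t^{-2}\rho (t)$ and the homogeneity of $p$, every monomial $x^{{\bf j}}$ occurring in $\kappa (p)$ satisfies $\vert{\bf j}\vert_{e}=2d$; hence $\kappa (p)$ is homogeneous of Slodowy degree $2d$, and $\ie{p}$ is its component of lowest \emph{standard} degree. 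Consequently, with $m=\deg\ie{p}$, one has $\ie{p}=\varepsilon (t^{-m}\tau \rond \kappa (p))\in\varepsilon (A)$, and since $\ie{\cdot}$ is multiplicative on the domain $\es S{\g^{e}}$, the algebra $\varepsilon (A)$ is exactly the graded subalgebra generated by the family $\{\ie{p}\}_{p}$. By the criterion (\ref{eq:alg_indep}) it will then suffice to produce a generating sequence $\poi p1{,\ldots,}{\rg}{}{}{}$ of $\ai g{}{}$ with $\deg p_{i}=d_{i}$ (the fundamental degrees) whose initial components generate $\varepsilon (A)$.

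Next I would analyse the family $A$ over $R=\k[t]$. As $A\subseteq\es S{\g^{e}}[t]$ and $\k[t]$ is a principal ideal domain, $A$ is torsion-free, hence flat, over $\k[t]$. The fibre over $t=c\neq 0$ is a rescaling of $\kappa (\ai g{}{})$, which is isomorphic to $\ai g{}{}$ because $\kappa$ is injective on invariants (the slice meets the generic orbits); with respect to the Slodowy grading its Hilbert series is $\prod_{i=1}^{\rg}(1-s^{2d_{i}})^{-1}$. Geometrically these generic fibres are rescaled copies of the intersection ${\cal S}_{e}\cap{\cal C}$ of the Slodowy slice with the nilpotent cone ${\cal C}$ of $\g$, which has pure dimension $r-\rg$. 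Combined with the hypothesis $\dim{\cal N}=r-\rg$ on the special fibre, this makes the projection ${\cal V}\to\k$ equidimensional of relative dimension $r-\rg$.

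The heart of the argument, and the step I expect to be the main obstacle, is to upgrade equidimensionality to the equality of Hilbert series $H_{\varepsilon (A)}(s)=\prod_{i=1}^{\rg}(1-s^{2d_{i}})^{-1}$. Flatness of $A$ gives $H_{A/tA}=\prod_{i}(1-s^{2d_{i}})^{-1}$, but the surjection $A/tA\twoheadrightarrow\varepsilon (A)$ yields only an inequality $H_{\varepsilon (A)}\leq\prod_{i}(1-s^{2d_{i}})^{-1}$; equality amounts to the saturation statement $A\cap t\,\es S{\g^{e}}[t]=tA$, equivalently to $\es S{\g^{e}}[t]/A_{+}\es S{\g^{e}}[t]$ having no $t$-torsion. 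This is where the properties of ${\cal V}$ and ${\cal V}_{*}$ established above enter: the hypothesis $\dim{\cal N}=r-\rg$ forces ${\cal N}\times\{0\}\subseteq{\cal V}_{*}$, so that ${\cal V}={\cal V}_{*}$ is a complete intersection of pure dimension $r-\rg+1$, cut out by a regular sequence extracted from $A_{+}$. Then $\es S{\g^{e}}[t]$ is a free, in particular $t$-torsion-free, extension of the corresponding polynomial subalgebra, and specialising at $t=0$ gives the desired equality of Hilbert series. A graded domain with Hilbert series $\prod_{i}(1-s^{2d_{i}})^{-1}$ and Krull dimension $\rg$ is forced to be polynomial, whence $\varepsilon (A)=\k[u_{1},\ldots,u_{\rg}]$ with $u_{i}$ homogeneous of Slodowy degree $2d_{i}$.

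Finally I would descend these generators to invariants. Since $\varepsilon (A)$ is generated by the $\ie{p}$ and the only generators of Slodowy degree $2d_{i}$ arise from invariants of standard degree $d_{i}$, I can extract from the spanning family $\{\ie{p}\}$ a minimal generating set of the form $u_{i}=\ie{p_{i}}$ with $p_{i}\in\ai g{}{}$ homogeneous of degree $d_{i}$. The $\ie{p_{1}},\ldots,\ie{p_{\rg}}$, being a minimal generating set of the polynomial algebra $\varepsilon (A)$, are algebraically independent, and hence so are the $p_{i}$ (a nontrivial relation among the $p_{i}$ would induce one among their initial components). As $\poi p1{,\ldots,}{\rg}{}{}{}$ are then $\rg$ algebraically independent homogeneous invariants whose degrees are the fundamental degrees $d_{1},\ldots,d_{\rg}$, comparing Hilbert series with $\ai g{}{}=\k[\poi q1{,\ldots,}{\rg}{}{}{}]$ gives $\k[\poi p1{,\ldots,}{\rg}{}{}{}]=\ai g{}{}$. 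Thus $(\poi p1{,\ldots,}{\rg}{}{}{})$ is a homogeneous generating sequence of $\ai g{}{}$ whose initial components $\ie{p_{1}},\ldots,\ie{p_{\rg}}$ are algebraically independent over $\k$, as required.
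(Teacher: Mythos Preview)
Your proposal has a genuine gap, and it also misidentifies where the difficulty lies.

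\textbf{The saturation step is not the obstacle.} The equality $A\cap t\,\es S{\g^{e}}[t]=tA$ (equivalently, $\varepsilon(A)=A/tA$) holds \emph{unconditionally}, without any hypothesis on $\dim{\cal N}$. If $a\in A$ and $a=tb$ with $b\in\es S{\g^{e}}[t]$, then $b=t^{-1}a\in A[t^{-1}]=\k[t,t^{-1},\poi a1{,\ldots,}{\rg}{}{}{}]$, and $b\in\es S{\g^{e}}[t]$, so $b\in A$ by the very definition of $A$ as an intersection. Consequently $P_{\varepsilon(A),\k}(T)=\prod_{i}(1-T^{2d_{i}})^{-1}$ always holds (this is Lemma~\ref{lsg1}(iv) in the paper). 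Your detour through ${\cal V}={\cal V}_{*}$ and regular sequences is both unnecessary for this conclusion and confused: you are mixing up $t$-torsion in $\es S{\g^{e}}[t]/A_{+}\es S{\g^{e}}[t]$ with $t$-torsion in $\es S{\g^{e}}[t]/A$, which are different objects.

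\textbf{The real gap is Step 5.} Your claim that ``a graded domain with Hilbert series $\prod_{i}(1-s^{2d_{i}})^{-1}$ and Krull dimension $\rg$ is forced to be polynomial'' is false. A counterexample: take $B=\k[a,b,c]/(a^{3}+b^{2})$ with $\deg a=2$, $\deg b=3$, $\deg c=6$. This is a graded domain of Krull dimension $2$ with Hilbert series $(1-t^{6})/((1-t^{2})(1-t^{3})(1-t^{6}))=1/((1-t^{2})(1-t^{3}))$, yet it is not polynomial (the degree-$2$ and degree-$3$ pieces are one-dimensional, forcing putative generators to be scalar multiples of $a$ and $b$, but $c\notin\k[a,b]$). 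It even embeds as a graded subalgebra of a polynomial ring via $a\mapsto x^{2}$, $b\mapsto \sqrt{-1}\,x^{3}$, $c\mapsto y$, so the ambient embedding of $\varepsilon(A)$ into $\es S{\g^{e}}$ does not save the argument. Since the Hilbert series identity holds for \emph{every} nilpotent $e$, your Step~5 would prove the theorem with no hypothesis at all --- but the theorem is known to fail for some $e$ (e.g.\ the example in \cite[Example 7.8]{CM1}).

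\textbf{Where the hypothesis is actually used.} The paper exploits $\dim{\cal N}=r-\rg$ not to compute a Hilbert series but to manufacture Cohen--Macaulayness. One adjoins to $A$ a homogeneous sequence $\poi p1{,\ldots,}{r-\rg}{}{}{}\in\es S{\g^{e}}_{+}$ cutting ${\cal N}$ down to $\{0\}$ (this is exactly where the dimension hypothesis enters), so that $C:=A[\poi p1{,\ldots,}{r-\rg}{}{}{}]$ has $\es S{\g^{e}}[t]_{+}$ as the radical of the ideal it generates. General results on graded algebras (Proposition~\ref{p2gf1}) then yield that $C$, and hence $A$, is Cohen--Macaulay. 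With $A$ now satisfying conditions (C1)--(C3), Proposition~\ref{pgf2} shows that the localization $A_{*}$ is polynomial over $R_{*}$, and Corollary~\ref{csg1} extracts generators of the form $t^{-2d_{i}}\tau\rond\kappa(q_{i})$ with $(\poi q1{,\ldots,}{\rg}{}{}{})$ a genuine generating sequence of $\ai g{}{}$. The Cohen--Macaulay step is the substantive one, and it is precisely what your Hilbert-series shortcut tries to bypass.
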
 

The key point is to show that, under the hypothesis of Theorem \ref{ti6}, 
$\varepsilon(A)$ is the subalgebra of $\es S{{\goth g}^{e}}$ generated by the family
$\ie p,\; p \in \ai g{}{}$, and hence that ${\cal N}$ coincides with the nullvariety in 
$\g^{f}$ of $\poi {\ie q}1{,\ldots,}{\rg}{}{}{}$. So, if ${\cal N}$ has dimension 
$r -\ell$, then the elements $\poi {\ie q}1{,\ldots,}{\rg}{}{}{}$ must be algebraically 
independent over $\k$. 

The remainder of the paper is organized as follows. 
In Section~\ref{gf}, we state useful results on commutative algebra 
of independent interest. 
Some of these results are probably well-known. 
Since we have not found appropriate references, proofs are provided. 
Moreover, we formulate them as they are used in the paper. 
We study in Section \ref{sg} properties of the varieties ${\cal V}$ and ${\cal V}_{*}$.  
The proof of Theorem \ref{ti6} is achieved 
in Section \ref{sg}. Theorem \ref{ti5} is 
a consequence of Theorem \ref{ti6},  
and it is proven in Section \ref{proof}. 

\subsection*{Acknowledgments}
The second author is partially 
supported by the ANR Project GeoLie Grant number ANR-15-CE40-0012. 
We thank the referee for valuable comments and suggestions. 

\section{Some results on commutative algebra} \label{gf}
In this section $t$ is an indeterminate and the base ring $R$ is $\k$, $\k[t]$ or 
$\k[[t]]$. For $M$ a graded space over ${\Bbb N}$ and for $j$ in ${\Bbb N}$, denote by 
$M^{[j]}$ the space of degree $j$ and by $M_{+}$ the sum of $M^{[j]},j>0$. Let $A$ be a 
finitely generated graded $R$-algebra over ${\Bbb N}$ such that $A^{[0]}=R$ and such that 
$A^{[j]}$ is a free $R$-module of finite rank for any $j \in \N$. 
Moreover, $A$ is an integral domain.
Denote by $\dim A$ the Krull dimension of $A$ and 
set\footnote{Since the Lie algebra $\g$ does not appear 
in this section, there will be no possible confusion between 
$\rg$ and the rank of $\g$, denoted $\rg$, in the introduction too. 
However, the notation will be justified in the next sections.}:
$$ \rg := \left \{ \begin{array}{lcl} \dim A & \mbox{ if } & R = \k \\
\dim A -1 & \mbox{ if } & R = \k[t] \text{ or }\k[t]]. \end{array}  \right.$$
As a rule, for $B$ an integral domain, 
we denote by $K(B)$ its fraction field.

The one-dimensional multiplicative 
group of $\k$ is denoted by $\gm$. 

\subsection{} \label{gf1}
Let $B$ be a graded subalgebra of $A$. 

\begin{lemma}\label{lgf1}
\begin{enumerate}
\item[{\rm (i)}] Let $\poi {{\goth p}}1{,\ldots,}{m}{}{}{}$ be pairwise different graded
prime ideals contained in $A_{+}$. If they are the minimal prime ideals containing their
intersection, then for some homogeneous element $p$ of $A_{+}$, 
the element $p$ is not in the union 
of $\poi {{\goth p}}1{,\ldots,}{m}{}{}{}$.
\item[{\rm (ii)}] For some homogeneous sequence $\poi p1{,\ldots,}{\rg}{}{}{}$ in 
$A_{+}$, $A_{+}$ is the radical of the ideal generated by $\poi p1{,\ldots,}{\rg}{}{}{}$.
\item[{\rm (iii)}] Suppose that $A_{+}$ is the radical of $AB_{+}$. Then for some 
homogeneous sequence $\poi p1{,\ldots,}{\rg}{}{}{}$ in $B_{+}$, $A_{+}$ is the radical of
the ideal generated by $\poi p1{,\ldots,}{\rg}{}{}{}$.
\end{enumerate}
\end{lemma}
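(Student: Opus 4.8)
The plan is to prove the three assertions in sequence, using (i) as the main engine and leveraging the graded structure throughout. Recall that $A$ is a finitely generated graded $R$-algebra over $\N$, integral domain, with $A^{[0]}=R$ and each $A^{[j]}$ a free $R$-module of finite rank.

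For part (i), I would argue by a prime-avoidance argument adapted to the graded setting. The minimality hypothesis means that no $\goth{p}_i$ is contained in the union of the others (else it would contain the intersection and contradict minimality by the standard fact that a prime containing a finite intersection of primes contains one of them). So for each $i$ there is an element in $\goth{p}_i$ not lying in $\bigcup_{k\neq i}\goth{p}_k$; taking homogeneous components, and using that the $\goth{p}_i$ are graded (hence a product or sum of suitable homogeneous pieces stays outside the other primes), one can assemble a single homogeneous element $p\in A_{+}$ avoiding all of them simultaneously. The one subtlety is that ordinary prime avoidance must be upgraded so that the witness is homogeneous of positive degree; this is where I would invoke that each $\goth{p}_i$ is contained in $A_{+}$ and that $A$ is graded over $\N$, multiplying homogeneous witnesses to fix degrees.

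For part (ii), I would build the sequence $p_1,\dots,p_{\rg}$ one element at a time, so that the radical of $(p_1,\dots,p_j)$ has height exactly $j$, i.e. its minimal primes all have height $j$. At each stage I apply part (i) to the minimal graded primes $\goth{q}_1,\dots,\goth{q}_m$ over $(p_1,\dots,p_j)$ that are not already equal to $A_{+}$: these are pairwise distinct graded primes contained in $A_{+}$ and (after possibly discarding redundant ones) minimal over their intersection, so (i) produces a homogeneous $p_{j+1}\in A_{+}$ avoiding all of them. By Krull's principal ideal theorem the height then increases, so after $\rg=\dim A$ (or $\dim A-1$ over $R=\k[t]$) steps the only minimal prime over the ideal, up to radical, is $A_{+}$ itself. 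The bookkeeping on height versus the value of $\rg$ — in particular handling the $R=\k[t]$ case where $A_{+}$ has height $\dim A - 1$ because of the transcendental direction $t$ — is the delicate point, and is precisely why $\rg$ was defined piecewise.

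For part (iii), the extra hypothesis is that $A_{+}=\sqrt{AB_{+}}$, and I would simply run the construction of (ii) but restrict all chosen homogeneous elements to lie in $B_{+}$ rather than in $A_{+}$. Since $A_{+}$ is the radical of the ideal generated by $B_{+}$, the minimal graded primes over $AB_{+}$ are contained in $A_{+}$, and part (i) applied to these primes yields homogeneous avoiding elements which, by the same reasoning as in (ii), can be taken in $B_{+}$: concretely, at each stage one intersects with $B_{+}$ and uses that $B_{+}$ is not contained in the union of the relevant primes because otherwise $\sqrt{AB_{+}}$ would be strictly smaller than $A_{+}$. I expect the main obstacle across all three parts to be ensuring at every step that the avoiding element can be chosen both homogeneous and of positive degree (and in $B_{+}$ for (iii)); once the graded prime-avoidance of (i) is correctly established, parts (ii) and (iii) follow by the inductive height argument combined with Krull's principal ideal theorem.
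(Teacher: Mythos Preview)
Your plan for (i) and (ii) matches the paper's argument: graded prime avoidance proved by induction on the number of primes, then an inductive construction of $p_{1},\ldots,p_{\rg}$ using Krull's height theorem. One inaccuracy in (i): the device for producing a \emph{homogeneous} avoider is not ``multiplying homogeneous witnesses to fix degrees'' but rather an additive trick. The paper takes $r\in\bigcap_{k<j}\goth{p}_{k}\setminus\goth{p}_{j}$ homogeneous (this is where minimality is used) and $q\in A_{+}\setminus\goth{p}_{j}$ homogeneous, and forms $p_{j-1}^{\,n}+(rq)^{m}$ with $m,n$ chosen so the two summands have the same degree. A product alone cannot work: anything you multiply by $r$ lands in each $\goth{p}_{k}$ for $k<j$.

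For (iii) your approach and the paper's differ. The paper does not rerun the proof of (ii) with elements constrained to $B_{+}$; instead it chooses homogeneous $a_{1},\ldots,a_{m}\in B_{+}$ generating $AB_{+}$, passes to the finitely generated subalgebra $B'=R[a_{1},\ldots,a_{m}]$ (which now satisfies the standing hypotheses of the section), applies (ii) directly to $B'$ to get $p_{1},\ldots,p_{d}\in B'_{+}\subset B_{+}$ with $B'_{+}$ the radical of $(p_{1},\ldots,p_{d})$, and then a dimension comparison forces $d=\rg$. This sidesteps the one genuine gap in your sketch: part (i) as stated only yields an avoider in $A_{+}$, and its proof uses the auxiliary element $r\in\bigcap_{k<j}\goth{p}_{k}$, which need not lie in $B$. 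Your route can be salvaged by contracting the minimal primes to $B$, setting $\goth{q}_{i}=\goth{p}_{i}\cap B\subsetneq B_{+}$ (strictness because $B_{+}\subset\goth{p}_{i}$ would give $A_{+}=\sqrt{AB_{+}}\subset\goth{p}_{i}\subsetneq A_{+}$), discarding redundancies, and rerunning the proof of (i) inside $B$; but this extra step is exactly what the paper's reduction to $B'$ avoids.
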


\begin{proof}
(i) Prove by induction on $j$ that for some homogeneous element $p_{j}$ of 
$A_{+}$, $p_{j}$ is not in the union of $\poi {{\goth p}}1{,\ldots,}{j}{}{}{}$.
Since ${\goth p}_{1}$ is a graded ideal strictly contained in $A_{+}$, it is true
for $j=1$. Suppose that it is true for $j-1$. If $p_{j-1}$ is not in ${\goth p}_{j}$, 
there is nothing to prove. Suppose that $p_{j-1}$ is in ${\goth p}_{j}$. According to the 
hypothesis, ${\goth p}_{j}$ is stricly contained in $A_{+}$ and it does not
contain the intersection of $\poi {{\goth p}}1{,\ldots,}{j-1}{}{}{}$. So, since
$\poi {{\goth p}}1{,\ldots,}{j}{}{}{}$ are graded ideals, for some homogeneous sequence 
$r,q$ in $A_{+}$,  
$$ r \in \bigcap _{k=1}^{j-1} {\goth p}_{k}\setminus {\goth p}_{j}, 
\quad  \text{and} \quad q \in A_{+}\setminus {\goth p}_{j} .$$
Denoting by $m$ and $n$ the respective degrees of $p_{j-1}$ and $rq$, 
$p_{j-1}^{n}+(rq)^{m}$ is homogeneous of degree $mn$ and it is not in 
$\poi {{\goth p}}1{,\ldots,}{j}{}{}{}$ since these ideals are prime.

(ii) Prove by induction on $i$ that for some homogeneous sequence 
$\poi p1{,\ldots,}{i}{}{}{}$ in $A_{+}$, the minimal prime ideals of $A$
containing $\poi p1{,\ldots,}{i}{}{}{}$ have height $i$. Let $p_{1}$ be in 
$A_{+}\setminus \{0\}$. By~\cite[Ch. 5, Theorem 13.5]{Mat}, all minimal prime ideal
containing $p_{1}$ has height $1$. Suppose that it is true for $i-1$. Let 
$\poi {{\goth p}}1{,\ldots,}{m}{}{}{}$ be the minimal prime 
ideals containing $\poi p1{,\ldots,}{i-1}{}{}{}$. Since $A_{+}$ has height $\rg > i-1$,
$A_{+}$ strictly contains $\poi {{\goth p}}1{,\ldots,}{m}{}{}{}$. By (i), there exists
a homogeneous element $p_{i}$ in $A_{+}$ not in the union of 
$\poi {{\goth p}}1{,\ldots,}{m}{}{}{}$. Then, by~\cite[Ch. 5, Theorem 13.5]{Mat}, the 
minimal prime ideals containing $\poi p1{,\ldots,}{i}{}{}{}$ have height $i$. For 
$i=\rg$, the minimal prime ideals containing $\poi p1{,\ldots,}{\rg}{}{}{}$ have height 
$\rg$. Hence they are equal to $A_{+}$ since $A_{+}$ is a prime ideal of height $\rg$ 
containing $\poi p1{,\ldots,}{\rg}{}{}{}$, whence the assertion.

(iii) The ideal $AB_{+}$ is generated by a homogeneous sequence 
$\poi a1{,\ldots,}{m}{}{}{}$ in $B_{+}$. Denote by $B'$ the subalgebra of $A$ generated 
by $\poi a1{,\ldots,}{m}{}{}{}$. Then $B'$ is a finitely generated graded subalgbera of 
$A$ such that $A_{+}$ is the radical of $AB'_{+}$. If $R=\k$, denote by $d$ its dimension
and if $t\in R$, denote by $d+1$ its dimension. By (ii),
for some homogeneous sequence $\poi p1{,\ldots,}{d}{}{}{}$ in $B'_{+}$, $B'_{+}$ is the 
radical of the ideal generated by $\poi p1{,\ldots,}{d}{}{}{}$. Then $A_{+}$ is the 
radical of the ideal of $A$ generated by $\poi p1{,\ldots,}{d}{}{}{}$. Since $A_{+}$ has 
height $\rg$, $\rg \leq d$ by~\cite[Ch. 5, Theorem 3.5]{Mat}. Since $B'$ is a subalgebra 
of $A$, its dimension is at most $\dim A$. Hence $d=\rg$.
\end{proof}

\begin{prop}\label{pgf1}
Suppose that $A_{+}$ is the radical of $AB_{+}$. Then $B$ is finitely generated and 
$A$ is a finite extension of $B$.
\end{prop}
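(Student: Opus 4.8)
The plan is to reduce the hypothesis on the possibly-infinitely-generated algebra $B$ to a finitely generated subalgebra, and then to run a graded Nakayama argument. First I would invoke Lemma~\ref{lgf1}(iii): since $A_{+}$ is the radical of $AB_{+}$, there is a homogeneous sequence $\poi p1{,\ldots,}{\rg}{}{}{}$ in $B_{+}$ such that $A_{+}$ is the radical of the ideal $(\poi p1{,\ldots,}{\rg}{}{}{})$ of $A$. Let $C:=R[\poi p1{,\ldots,}{\rg}{}{}{}]$ be the graded $R$-subalgebra of $B$ they generate. Every element of $C_{+}$ lies in $(\poi p1{,\ldots,}{\rg}{}{}{})C$, so $AC_{+}=(\poi p1{,\ldots,}{\rg}{}{}{})A$ and hence $A_{+}=\sqrt{AC_{+}}$. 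The point of this step is that $C$, unlike $B$, is visibly a finitely generated $R$-algebra, which is what makes the rest of the argument available.

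Next I would prove that $A$ is a finite $C$-module. Consider the graded quotient $D:=A/AC_{+}$. Since $AC_{+}$ is generated in positive degree, $D^{[0]}=R$, and each $D^{[j]}$ is a quotient of the finite free $R$-module $A^{[j]}$, hence finite over $R$. I claim $D$ is a finite $R$-module. Indeed, $D$ is generated as an $R$-algebra by finitely many homogeneous elements $\poi {\bar y}1{,\ldots,}{s}{}{}{}$ of positive degree; since each $y_{i}$ lies in $A_{+}=\sqrt{AC_{+}}$, a power $y_{i}^{N_{i}}$ lies in $AC_{+}$, i.e. $\bar y_{i}^{N_{i}}=0$ in $D$. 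Thus the finitely many monomials $\bar y_{1}^{a_{1}}\cdots \bar y_{s}^{a_{s}}$ with $a_{i}<N_{i}$ span $D$ over $R$, so $D$ is a finite $R$-module; being graded, it is concentrated in finitely many degrees and admits a finite system of homogeneous $R$-module generators. Lifting these to homogeneous elements $\poi z0{,\ldots,}{m}{}{}{}$ of $A$ (with $z_{0}=1$), a degree induction—the graded Nakayama lemma for the $\N$-graded $C$-module $A$—shows that $\poi z0{,\ldots,}{m}{}{}{}$ generate $A$ over $C$: if $A=\sum_{i} Cz_{i}+C_{+}A$, then any homogeneous $a$ of degree $d$ writes $a=\sum_{i} c_{i}z_{i}+\sum_{k} c'_{k}a_{k}$ with $c'_{k}\in C_{+}$ and $\deg a_{k}<d$, and the inductive hypothesis applied to the $a_{k}$ finishes it. Hence $A$ is a finite $C$-module.

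Finally I would deduce both conclusions. As $C$ is a finitely generated algebra over the Noetherian ring $R$, it is Noetherian, and $A$ is then a Noetherian $C$-module; its $C$-submodule $B$ is therefore a finite $C$-module. Adjoining a finite $C$-module generating set of $B$ to the algebra generators $\poi p1{,\ldots,}{\rg}{}{}{}$ of $C$ exhibits $B$ as a finitely generated $R$-algebra. Moreover, since $C\subseteq B$, any $C$-module generating set of $A$ is a fortiori a $B$-module generating set, so $A$ is a finite $B$-module, i.e. a finite extension of $B$.

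The main obstacle is the middle step: showing that $A$ is module-finite over the finitely generated subalgebra $C$. This rests on combining two points with care—that the radical hypothesis forces the positive part of $D=A/AC_{+}$ to be nil, whence $D$ is finite over $R$ by the nilpotent-generator count, and the graded Nakayama lifting, which is legitimate only because the grading is bounded below and $t$ sits in degree $0$ when $R=\k[t]$ or $\k[[t]]$.
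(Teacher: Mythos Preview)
Your proof is correct but takes a genuinely different route from the paper. The paper argues geometrically: it fixes homogeneous generators $a_{1},\ldots,a_{m}$ of the ideal $AB_{+}$, sets $C=R[a_{1},\ldots,a_{m}]$, passes to the integral closures $\overline{C}\subset\overline{A}$ inside $K(A)$, and uses the $\gm$-action together with Zariski's main theorem to show that the morphism ${\mathrm{Specm}}(\overline{A})\to{\mathrm{Specm}}(\overline{C})$ is a quasi-finite birational open immersion whose image contains all $\gm$-fixed points, hence an isomorphism; from $\overline{A}=\overline{C}$ it then deduces that $A$ and $B$ are finite $C$-modules. You instead prove finiteness of $A$ over $C$ directly by a graded Nakayama argument: the quotient $A/AC_{+}$ has nilpotent algebra generators (because $A_{+}=\sqrt{AC_{+}}$), hence is finite over $R$, and a degree induction lifts finitely many homogeneous representatives to $C$-module generators of $A$. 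Your approach is more elementary---it avoids integral closures and Zariski's main theorem entirely and uses only the $\N$-grading and Noetherianity of $C$---while the paper's approach makes the geometric picture (the $\gm$-equivariant contraction to the vertex) explicit, which is the viewpoint recurring throughout the paper. One small remark: invoking Lemma~\ref{lgf1}(iii) to get exactly $\rg$ generators is unnecessary for your argument, since you never use the bound on their number; any finite homogeneous generating set of $AB_{+}$ (as in the paper's first line) would serve.
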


\begin{proof}
Since $A$ is a noetherian ring, for some homogeneous sequence $\poi a1{,\ldots,}{m}{}{}{}$
in $B_{+}$, $AB_{+}$ is the ideal generated by this sequence. Denote by $C$ the 
$R$-subalgebra of $A$ generated by $\poi a1{,\ldots,}{m}{}{}{}$. Then $C$ is a graded
subalgebra of $A$. Denote by $\pi $ the morphism 
$$\xymatrix{ {\mathrm {Specm}}(A) \ar[r]^{\pi }  &  
{\mathrm {Specm}}(C)  } $$
whose comorphism is the canonical injection $C 
\hookrightarrow A$. Let $\overline{A}$ and
$\overline{C}$ be the respective 
integral closures of $A$ and $C$ in $K(A)$. Since $C$ is contained 
in $A$, $\overline{C}$ is contained in $\overline{A}$. Let $\alpha $ and $\beta $ be the 
morphisms  
$$ \xymatrix{ {\mathrm {Specm}}(\overline{A}) \ar[r]^{\alpha }  & 
{\mathrm {Specm}}(A)  } \quad  \text{and} \quad
\xymatrix{ {\mathrm {Specm}}(\overline{C}) \ar[r]^{\beta }  & 
{\mathrm {Specm}}(C)  } $$
whose comorphisms are the canonical injections 
$A\hookrightarrow \overline{A}$ and 
$C\hookrightarrow \overline{C}$ respectively. Then there is a commutative diagram
$$ \xymatrix{ {\mathrm {Specm}}(\overline{A}) \ar[rr]^{\overline{\pi }}  
\ar[d] _{\alpha } & & 
{\mathrm {Specm}}(\overline{C}) \ar[d]^{\beta } \\ 
 {\mathrm {Specm}}(A) \ar[rr]^{\pi }  & & {\mathrm {Specm}}(C)  }$$
with $\overline{\pi }$ the morphism whose comorphism is the canonical injection 
$\overline{C}\rightarrow \overline{A}$. 

The action of $\gm$ in $A$ extends to an action of $K(A)$, 
and $\overline{A}$ is 
invariant under this action. Denoting by $\overline{R}$ the integral closure of $R$ in 
$K(A)$, $\overline{R}$ is the set of fixed points under the action of $\gm$ in 
$\overline{A}$. Since $C$ is invariant under $\gm$ so is $\overline{C}$. For 
${\goth m}$ a maximal ideal of $\overline{R}$, 
the ideal ${\goth m}+\overline{C}_{+}$ is 
the maximal ideal of $\overline{C}$ containing ${\goth m}$ and invariant under 
$\gm$. Then, for ${\goth p}$ a maximal ideal of $\overline{C}$, 
${\goth p}\cap \overline{R}+\overline{C}_{+}$ is in the closure of the orbit of
${\goth p}$ under $\gm$. Moreover, 
$$ \{{\goth m}+\overline{A}_{+}\} = \overline{\pi }^{-1}\{{\goth m}+\overline{C}_{+}\} $$
for all maximal ideal ${\goth m}$ of $\overline{R}$. Hence $\overline{\pi }$ is quasi 
finite. Moreover $\overline{\pi }$ is birational. Then, by Zariski's main 
theorem~\cite{Mu}, $\overline{\pi }$ is an open immersion. 
The image of $\overline{\pi }$ contains fixed points 
for the $\gm$-action, and the closure of each $\gm$-orbit 
contains fixed points. 
As a result, $\overline{\pi }$ 
is surjective since it is $\gm$-equivariant. 
Hence $\overline{\pi }$ is an isomorphism and $\overline{A}=\overline{C}$.
As a result, $\overline{A}$ is a finite extension of $C$ since $\beta $ is a finite 
morphism. As submodules of the finite module $\overline{A}$ over the noetherian ring $C$,
$A$ and $B$ are finite $C$-modules. Hence $A$ is a finite extension of $B$. Denoting by 
$\poi {\omega }1{,\ldots,}{d}{}{}{}$ a generating family of the $C$-module $B$, $B$ is 
the subalgebra of $A$ generated by $\poi a1{,\ldots,}{m}{}{}{}$, 
$\poi {\omega }1{,\ldots,}{d}{}{}{}$.
\end{proof}

Denote by $\k[t]_{*}$ the localization of $\k[t]$ at the prime ideal $t\k[t]$ and set: 
$$ R_{*} := \left \{ \begin{array}{lcl} \k & \mbox{ if } & R = \k \\
\k[t]_{*} & \mbox{ if } & R = \k[t] \\ 
\k[[t]] & \mbox{ if } & R = \k[[t]] \end{array}
\right.  \qquad
\widehat{R} := \left \{ \begin{array}{lcl} \k & \mbox{ if } & R = \k \\
\k[[t]] & \mbox{ if } & R = \k[t] \\ 
 \k[[t]] & \mbox{ if } & R = \k[[t]]. \end{array}
\right. $$ 
For $M$ a $R$-module, set $\widehat{M} := \tk R{\widehat{R}}M$.

\begin{lemma}\label{l2gf1}
Suppose $R=\k[t]$. Let $M$ be a torsion free $R$-module and let $N$ be a submodule 
of $M$. Then for $a$ in $\widehat{N}\cap M$, $ra$ is in $N$ for some $r$ in $R$ such
that $r(0)\neq 0$.
\end{lemma}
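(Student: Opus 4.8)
The plan is to realize the intersection $\widehat{N}\cap M$ inside the common overmodule $\widehat{M}$, and then to reduce the whole statement to a computation in the quotient $M/N$. First I would record the flatness facts that make everything work: since $R=\k[t]$ is a principal ideal domain, a torsion-free $R$-module is flat, so $M$ is flat; moreover $\widehat{R}=\k[[t]]$ is a domain containing $R$, hence torsion-free and therefore flat over $R$. Flatness of $M$ (resp. of the submodule $N$, which is again torsion-free) applied to the injection $R\hookrightarrow\widehat{R}$ shows that the canonical maps $M\to\widehat{M}$ and $N\to\widehat{N}$ are injective; flatness of $\widehat{R}$ applied to $N\hookrightarrow M$ shows that $\widehat{N}\to\widehat{M}$ is injective. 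Thus both $M$ and $\widehat{N}$ embed into $\widehat{M}$, and the intersection $\widehat{N}\cap M$ is meaningful.

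Next I would exploit the short exact sequence $0\to N\to M\to M/N\to 0$. Tensoring it with the flat algebra $\widehat{R}$ gives an exact sequence $0\to\widehat{N}\to\widehat{M}\to\widehat{R}\otimes_R(M/N)\to 0$, so that $\widehat{N}$ is exactly the kernel of the map $\widehat{M}\to\widehat{R}\otimes_R(M/N)$. Hence, for $a\in\widehat{N}\cap M$ with image $\bar a\in M/N$, the condition $a\in\widehat{N}$ says precisely that the image $1\otimes\bar a$ of $a$ in $\widehat{R}\otimes_R(M/N)$ vanishes. Since $ra\in N$ is equivalent to $r\bar a=0$, the problem is now reduced to the following: if $\bar a\in M/N$ satisfies $1\otimes\bar a=0$ in $\widehat{R}\otimes_R(M/N)$, produce $r\in R$ with $r(0)\neq 0$ and $r\bar a=0$.

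For the core step I would pass to the cyclic submodule $R\bar a\subseteq M/N$. Writing $\mathfrak a$ for the annihilator of $\bar a$, we have $R\bar a\cong R/\mathfrak a$, and flatness of $\widehat{R}$ turns the inclusion $R\bar a\hookrightarrow M/N$ into an inclusion $\widehat{R}\otimes_R R\bar a\cong\widehat{R}/\mathfrak a\widehat{R}\hookrightarrow\widehat{R}\otimes_R(M/N)$. Under this identification the vanishing of $1\otimes\bar a$ means $1\in\mathfrak a\widehat{R}$, i.e. $\mathfrak a\widehat{R}=\widehat{R}$. Because $R=\k[t]$ is a PID, $\mathfrak a=(g)$ for some $g$, necessarily nonzero (otherwise $\mathfrak a\widehat{R}=0$), and $g\widehat{R}=\widehat{R}$ forces $g$ to be a unit of $\k[[t]]$, that is $g(0)\neq 0$. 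Taking $r=g\in\mathfrak a$ then gives $r\bar a=0$ with $r(0)\neq 0$, whence $ra\in N$.

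The genuinely delicate points are the bookkeeping in the first paragraph — making sure $\widehat{N}\cap M$ is interpreted inside $\widehat{M}$ and that all the relevant maps are injective — and locating where the hypotheses are really used: torsion-freeness of $M$ is exactly what guarantees $M\hookrightarrow\widehat{M}$ (so that an element of $M$ can be tested for membership in $\widehat{N}$), while the principal-ideal structure of $R$ is what converts the abstract condition $\mathfrak a\widehat{R}=\widehat{R}$ into the concrete conclusion $g(0)\neq 0$. The computation itself is then immediate, so I expect the main obstacle to be purely the careful handling of flatness and of the several embeddings, rather than any hard estimate.
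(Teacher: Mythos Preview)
Your proof is correct and follows essentially the same route as the paper's: both embed $M$ and $\widehat{N}$ into $\widehat{M}$ via flatness, reduce to the image $\bar a$ in $M/N$ and its annihilator $\mathfrak a$ (the paper calls it $J_a$), and deduce $\widehat{R}\mathfrak a=\widehat{R}$, whence $\mathfrak a$ contains an element invertible in $\k[[t]]$. The paper packages the exactness bookkeeping into a pair of commutative diagrams rather than writing it out linearly, and at the last step it appeals to the locality of $\widehat{R}$ (any ideal hitting all of $\widehat{R}$ must contain a unit) rather than to the PID structure of $R$, but these are purely cosmetic differences.
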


\begin{proof}
Since $M$ is torsion free, the canonical map $M\rightarrow \widehat{M}$ is an 
embedding. Moreover, the canonical map $\widehat{N}\rightarrow \widehat{M}$ is an 
embedding since $\widehat{R}$ is flat over $R$. Let $a$ be in $\widehat{N}\cap M$  and 
let $\overline{a}$ be its image in $M/N$ by the quotient map. Denote by $J_{a}$ the 
annihilator of $\overline{a}$ in $R$, whence a commutative diagram
$$\xymatrix{
0 \ar[r] & N \ar[r]^{\dd} & M \ar[rr]^{\dd} && M/N \ar[r] & 0 \\ 
0 \ar[r] & J_{a} \ar[r]^{\dd}  & R \ar[rr]^{\dd} \ar[u]^{\delta } && 
R\overline{a} \ar[r] \ar[u]^{\delta } & 0 \\ 
& & 0 \ar[u] && 0 \ar[u] & }$$
with exact lines and columns. Since $\widehat{R}$ is a flat extension of $R$, tensoring 
this diagram by $R$ gives the following diagram with exact lines and columns:
$$\xymatrix{
0 \ar[r] & \widehat{N} \ar[r]^{\dd} & \widehat{M} 
\ar[rr]^{\dd} && \tk {R}{\widehat{R}}M/N \ar[r] & 0 \\ 
0 \ar[r] & \widehat{R}J_{a} \ar[r]^{\dd} & \widehat{R} \ar[rr]^{\dd} \ar[u]^{\delta }
&& \widehat{R}\overline{a} \ar[r] \ar[u]^{\delta } & 0 \\ 
& & 0 \ar[u] && 0 \ar[u] & }$$
For $b$ in $\widehat{R}$, $(\delta \rond \dd) b = (\dd \rond \delta) b = 0$ since $a$ is 
in $\widehat{N}$, whence $\dd b = 0$. As a result, $\widehat{R}J_{a}=\widehat{R}$. 
Hence $J_{a}$ contains an element $r$, invertible in $\widehat{R}$, that is $r(0)\neq 0$,
whence the lemma.
\end{proof}

Set 
$$A_{*} := \tk R{R_{*}}A  \qquad 
\text{and}\qquad \widehat{A} := \tk R{\widehat{R}}A. 
$$
Since $A^{[0]}=R$, the grading on $A$ extends to gradings on 
$A_{*}$ and $\widehat{A}$ such that $A_{*}^{[0]} = R_{*}$ and 
$\widehat{A}^{[0]}=\widehat{R}$. When $R=\k$ or $R=\k[[t]]$, $A_{*}=A$ and 
$\widehat{A}=A$.

For $\poi p1{,\ldots,}{\rg}{}{}{}$ a homogeneous sequence 
in $A_{+}$ set:
\begin{align*}
\underline{p} := \left \{ \begin{array}{lcl}
\poi p1{,\ldots,}{\rg}{}{}{} & \mbox{ if } & 
R = \k \\ t,\poi p1{,\ldots,}{\rg}{}{}{} & \mbox{ if } & R=\k[[t]] ,
\end{array}  \right.  
\end{align*}
and denote by $J_{\underline{p}}$ the ideal of $A$ generated by 
the sequence 
$\underline{p}$. 

\begin{lemma}\label{l3gf1}
Suppose that $A$ is Cohen-Macaulay. 
Let $\poi p1{,\ldots,}{\rg}{}{}{}$ be a homogeneous 
sequence in $A_{+}$ such that $A_{+}$ is the radical of the ideal of $A$ generated by 
$\poi p1{,\ldots,}{\rg}{}{}{}$ and let $V$ be a graded complement in $A$ to the 
$\k$-subspace $J_{\underline{p}}$.

\begin{enumerate}
\item[{\rm (i)}] The space $V$ has finite dimension.
\item[{\rm (ii)}] The space $A_{*}$ is equal to $VR_{*}[\poi p1{,\ldots,}{\rg}{}{}{}]$.
\item[{\rm (iii)}] The algebra $A$ is a flat extension of 
$R[\poi p1{,\ldots,}{\rg}{}{}{}]$.
\item[{\rm (iv)}] For all homogeneous elements $\poi a1{,\ldots,}{n}{}{}{}$ in $A$, 
linearly independent over $\k$ modulo $J_{\underline{p}}$, $\poi a1{,\ldots,}{n}{}{}{}$ 
are linearly independent over $R[\poi p1{,\ldots,}{\rg}{}{}{}]$.
\item[{\rm (v)}] The linear map
$$ \tk {\k}VR_{*}[\poi p1{,\ldots,}{\rg}{}{}{}] \longrightarrow A_{*}, \qquad 
v\tens a \longmapsto va$$ 
is an isomorphism.
\end{enumerate}
\end{lemma}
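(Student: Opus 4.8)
The plan is to realize $A$ as a free module over the graded subalgebra $P := R[p_{1},\ldots,p_{\rg}]$ generated by the sequence, and then to read off (i)--(v) from this freeness. First I would prove that $P$ is a polynomial ring and that $A$ is finite over it. Since $A_{+}$ is the radical of the ideal $(p_{1},\ldots,p_{\rg})A = AP_{+}$, Proposition \ref{pgf1} applies with $B=P$ and shows that $P$ is finitely generated and that $A$ is a finite extension of $P$. In particular $\dim P = \dim A$; on the other hand $\dim R[p_{1},\ldots,p_{\rg}]\leq \dim R+\rg$, with equality if and only if $p_{1},\ldots,p_{\rg}$ are algebraically independent over $K(R)$. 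Comparing with the definition of $\rg$ forces equality, so $p_{1},\ldots,p_{\rg}$ are algebraically independent and $P\cong R[T_{1},\ldots,T_{\rg}]$ is a graded polynomial ring over $R$; as $R$ is regular, so is $P$.

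Next I would settle (i) and identify the quotient $A/J_{\underline{p}}$. Let $\mathfrak{m}_{0}$ be the graded ideal of $P$ generated by $\underline{p}$, so that $\mathfrak{m}_{0}=P_{+}$ when $R=\k$ and $\mathfrak{m}_{0}=tP+P_{+}$ when $t\in R$; then $J_{\underline{p}}=\mathfrak{m}_{0}A$ and $P/\mathfrak{m}_{0}=\k$. Hence $A/J_{\underline{p}}=A\otimes_{P}\k$ is finite-dimensional over $\k$, because $A$ is a finite $P$-module, and since the quotient map restricts to an isomorphism of $V$ onto $A/J_{\underline{p}}$, assertion (i) follows. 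The same identification shows that a homogeneous family in $A$ is a basis of $V$ precisely when its image is a $\k$-basis of $A/J_{\underline{p}}$; I will use this below.

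The crux is to show that $A$ is free over $P$, and this is where the Cohen-Macaulay hypothesis enters. Because $A_{+}=\sqrt{(p_{1},\ldots,p_{\rg})A}$, the radical of $\mathfrak{m}_{0}A$ is the graded maximal ideal of $A$, so the depth of $A$ as a $P$-module, computed with respect to $\mathfrak{m}_{0}$, equals the depth of $A$ as a ring, namely $\dim A=\dim P$. Since $P$ is regular, the graded Auslander-Buchsbaum formula yields $\mathrm{pd}_{P}A=\mathrm{depth}\,P-\mathrm{depth}_{P}A=0$, so $A$ is a free $P$-module; equivalently, by miracle flatness \cite{Mat}, the finite morphism $\mathrm{Specm}(A)\to\mathrm{Specm}(P)$ from a Cohen-Macaulay space to a regular one with zero-dimensional fibres is flat, which proves (iii). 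For $R=\k$ and $R=\k[[t]]$ the degree-zero part $P^{[0]}=R$ is local, so this reasoning applies verbatim and gives $A$ free over $P$. For $R=\k[t]$, where $P^{[0]}=\k[t]$ is not local, I would instead run the argument after completing, proving that $\widehat{A}$ is free over $\widehat{P}=\k[[t]][p_{1},\ldots,p_{\rg}]$, and then descend the freeness to the localizations $A_{*}$ over $P_{*}=R_{*}[p_{1},\ldots,p_{\rg}]$ by means of Lemma \ref{l2gf1}, which clears denominators that are invertible in $R_{*}=\k[t]_{*}$.

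Finally, (ii), (iv) and (v) follow from this freeness together with graded Nakayama. When $A_{*}=A$ (that is, $R=\k$ or $R=\k[[t]]$), a homogeneous $\k$-basis of $A/J_{\underline{p}}=A/\mathfrak{m}_{0}A$ is exactly a basis of $V$; by graded Nakayama such a family generates $A$ over $P$, and by freeness it is a basis, so the multiplication map $\tk{\k}VR_{*}[p_{1},\ldots,p_{\rg}]\to A_{*}$ is bijective. This gives (ii) and (v) at once, and for $R=\k[t]$ these statements are obtained from the complete case by the completion-and-descent of the previous paragraph. For (iv), homogeneous elements $a_{1},\ldots,a_{n}$ linearly independent over $\k$ modulo $J_{\underline{p}}$ can be completed to a basis of $V$; being part of a $P_{*}$-basis of $A_{*}$ by (v), they are linearly independent over $P_{*}$, hence a fortiori over the subring $P=R[p_{1},\ldots,p_{\rg}]$. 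I expect the main obstacle to be the freeness step: extracting a free-module structure from the sole combination of finiteness and the Cohen-Macaulay property, and arranging the base ring (through $R_{*}$, $\widehat{R}$, and Lemma \ref{l2gf1}) so that the graded local machinery genuinely applies.
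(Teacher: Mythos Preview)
Your argument is correct and its architecture matches the paper's: both use Proposition~\ref{pgf1} to get finiteness of $A$ over $P=R[p_1,\ldots,p_\ell]$ and hence that $P$ is a polynomial ring, both invoke miracle flatness (the paper cites \cite[Theorem~23.1]{Mat}) for (iii), and both establish (ii) by a graded Nakayama iteration followed by completion and descent through Lemma~\ref{l2gf1}. The one substantive difference is your handling of (iv). The paper keeps only flatness and proves (iv) by an explicit induction on $n$: given a relation $\sum b_i a_i=0$, it analyzes the syzygy module $K=\ker(P^n\to P,\ (c_i)\mapsto\sum c_i b_i)$, uses flatness to write $(a_i)$ as an $A$-combination of generators of $K$, extracts an invertible coefficient from the condition $a_n\notin J_{\underline p}$, and reduces to $n-1$. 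You instead upgrade flatness to \emph{freeness} in the graded-local cases via graded Auslander--Buchsbaum, and then (iv) is immediate: extend $a_1,\ldots,a_n$ to a homogeneous lift of a $\k$-basis of $A/J_{\underline p}$ (not literally inside the fixed $V$, but that is harmless---any such lift gives another graded complement $V'$ to which (v) applies), and read off linear independence over $P_*\supset P$ from the resulting $P_*$-basis. Your route is shorter and more conceptual for (iv); the paper's route is more elementary in that it never appeals to Auslander--Buchsbaum, only to flatness and an explicit syzygy computation.
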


\begin{proof}
According to Lemma~\ref{lgf1}(ii), the sequence $p$ does exist.

(i) Let $J_{p}$ be the ideal of $A$ generated by $\poi p1{,\ldots,}{\rg}{}{}{}$. Since 
$A_{+}$ is the radical of $J_{p}$, $A^{[d]}=J_{p}^{[d]}$ for $d$ sufficiently big. 
When $t\in R$, for all $d$, 
then $tA^{[d]}$ has finite codimension in $A^{[d]}$ since 
$A^{[d]}$ is a finite free $R$-module. Hence $J_{\underline{p}}$ has finite codimension
in $A$ so that $V$ has finite dimension.

(ii) Suppose that $t$ is in $R$. First of all, we prove by induction on $d$ the inclusion
$$ A^{[d]} \subset (VR[\poi p1{,\ldots,}{\rg}{}{}{}])^{[d]} + tA^{[d]} .$$
Since $A^{[0]}$ is the direct sum of $V^{[0]}$ and 
$J_{\underline{p}}^{[0]}$, $V^{[0]}$ is contained in $\k + tR$, whence the inclusion for 
$d=0$. Suppose that it is true for all $j$ smaller than $d$. Since 
$\poi p1{,\ldots,}{\rg}{}{}{}$ have positive degrees, by induction hypothesis,
$$J_{\underline{p}}^{[d]} \subset (VR[\poi p1{,\ldots,}{\rg}{}{}{}])^{[d]} + tA^{[d]} ,$$ 
whence the inclusion for $d$. Then, by induction on $m$,
$$ A^{[d]} \subset (VR[\poi p1{,\ldots,}{\rg}{}{}{}])^{[d]} + t^{m}A^{[d]} .$$
As a result, since $A^{[d]}$ is a finite $R$-module, 
$$ A^{[d]} \subset (V\widehat{R}[\poi p1{,\ldots,}{\rg}{}{}{}])^{[d]} ,$$
whence $\widehat{A} = V\widehat{R}[\poi p1{,\ldots,}{\rg}{}{}{}]$. This equality remains 
true when $R=\k$ by an analogous and simpler argument.

When $R=\k[t]$, according to Lemma~\ref{l2gf1}, for $a$ in $A$, $ra$ is in 
$VR[\poi p1{,\ldots,}{\rg}{}{}{}]$ for some $r$ in $R$ such that $r(0)\neq 0$. As a 
result, $A_{*}=VR_{*}[\poi p1{,\ldots,}{\rg}{}{}{}]$.

(iii) By Proposition~\ref{pgf1}, $A$ is a finite extension of 
$R[\poi p1{,\ldots,}{\rg}{}{}{}]$. In particular, $R[\poi p1{,\ldots,}{\rg}{}{}{}]$
has dimension $\rg+\dim R$ so that $\poi p1{,\ldots,}{\rg}{}{}{}$ are algebraically 
independent over $R$. Hence $R[\poi p1{,\ldots,}{\rg}{}{}{}]$ is a regular algebra, 
whence the assertion by ~\cite[Ch. 8, Theorem 23.1]{Mat}.

(iv) Prove the assertion by induction on $n$. Since $A$ is an integral domain, 
the assertion is true for $n=1$. Suppose the assertion true for $n-1$. Let 
$(\poi b1{,\ldots,}{n}{}{}{})$ be a homogeneous sequence in 
$R[\poi p1{,\ldots,}{\rg}{}{}{}]$ such that
$$ b_{1}a_{1} + \cdots + b_{n}a_{n} = 0 .$$
Let $K$ and $I$ be the kernel and the image of the linear map
$$ R[\poi p1{,\ldots,}{\rg}{}{}{}]^{n} \longrightarrow R[\poi p1{,\ldots,}{\rg}{}{}{}], 
\qquad (\poi c1{,\ldots,}{n}{}{}{}) \longmapsto c_{1}b_{1} + \cdots + c_{n}b_{n} ,$$
whence the short exact sequence of $R[\poi p1{,\ldots,}{\rg}{}{}{}]$ modules
$$ 0 \longrightarrow K \longrightarrow R[\poi p1{,\ldots,}{\rg}{}{}{}]^{n} 
\longrightarrow I \longrightarrow 0 .$$
The grading of $R[\poi p1{,\ldots,}{\rg}{}{}{}]$ induces a grading of
$R[\poi p1{,\ldots,}{\rg}{}{}{}]^{n}$ and $K$ is a graded submodule of 
$R[\poi p1{,\ldots,}{\rg}{}{}{}]^{n}$ since $\poi b1{,\ldots,}{n}{}{}{}$ is a homogeneous 
sequence in $R[\poi p1{,\ldots,}{\rg}{}{}{}]$. Denote by $\poi y1{,\ldots,}{m}{}{}{}$ a 
generating homogeneous sequence of the $R[\poi p1{,\ldots,}{\rg}{}{}{}]$-module $K$.
By (iii), the short sequence of $A$-modules
$$ 0 \longrightarrow \tk {R[\poi p1{,\ldots,}{\rg}{}{}{}]}AK \longrightarrow 
A^{n} \longrightarrow \tk {R[\poi p1{,\ldots,}{\rg}{}{}{}]}AI \longrightarrow 0 $$
is exact. So, for some homogeneous sequence $\poi x1{,\ldots,}{m}{}{}{}$ in $A$,
$$ a_{i} = \sum_{j=1}^{m} x_{j} y_{j,i}$$
for $i=1,\ldots,n$. Since $a_{n}$ is not in $J_{\underline{p}}$, 
for some $j_{*}$, the element 
$y_{j_{*},i}$ is an invertible element of $R_{*}$, whence
$$ b_{n}y_{j_{*},n} = - \sum_{i=1}^{n-1} b_{i}y_{j_{*},i} \quad  \text{and} \quad 
\sum_{i=1}^{n-1} b_{i} (y_{j_{*},n}a_{i} - a_{n}y_{j_{*},i}) = 0 .$$
So, by induction hypothesis, 
$$\poi b1{ = \cdots = }{n-1}{}{}{} = 0$$
since the elements
$$ y_{j_{*},n}a_{1} - a_{n}y_{j_{*},1} ,\ldots,
y_{j_{*},n}a_{n-1} - a_{n}y_{j_{*},n-1}$$
are linearly independent over $\k$ modulo $J_{\underline{p}}$. Then $b_{n} = 0 $ since 
$y_{j_{*},n}$ is invertible.

(v) Let $(\poi v1{,\ldots,}{n}{}{}{})$ be a homogeneous basis of $V$. Since the space of
relations of linear dependence over $R[\poi p1{,\ldots,}{\rg}{}{}{}]$ of 
$\poi v1{,\ldots,}{n}{}{}{}$ is graded, it is equal to $\{0\}$ by (iv), whence 
the assertion by (ii).
\end{proof}

\begin{coro}\label{cgf1}
\begin{enumerate}
\item[{\rm (i)}] The algebra $A_{*}$ is Cohen-Macaulay if and only if for some 
homogeneous sequence $\poi p1{,\ldots,}{\rg}{}{}{}$ in $A_{+}$, 
the algebra $A_{*}$ is a finite free extension of $R_{*}[\poi p1{,\ldots,}{\rg}{}{}{}]$. 
\item[{\rm (ii)}] Suppose that $A_{*}$ is Cohen-Macaulay. For a homogeneous sequence
$\poi q1{,\ldots,}{\rg}{}{}{}$ in $A_{+}$, $A_{*}$ is a finite free extension
of $R_{*}[\poi q1{,\ldots,}{\rg}{}{}{}]$ if and only if $R_{*}A_{+}$ is the radical of 
the ideal of $A_{*}$ generated by $\poi q1{,\ldots,}{\rg}{}{}{}$.
\end{enumerate}
\end{coro}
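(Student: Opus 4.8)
The plan is to prove part (ii) first and then to deduce part (i) from it together with Lemma \ref{lgf1}(ii). Throughout I write $C:=R_{*}[q_{1},\ldots,q_{\rg}]$ (or $R_{*}[p_{1},\ldots,p_{\rg}]$ in the context of (i)). In every case $R_{*}$ is regular — it is the field $\k$, or the discrete valuation ring $\k[t]_{*}$, or $\k[[t]]$ — so $C$ is a polynomial ring over a regular ring, hence regular and Cohen--Macaulay. I also record that $\dim A_{*}=\rg+\dim R_{*}$: this is immediate when $R=\k$ or $\k[[t]]$, since then $A_{*}=A$, while for $R=\k[t]$ every prime of $A$ below the graded maximal ideal $tA+A_{+}$ survives the localization at $\k[t]\setminus t\k[t]$, so $\dim A_{*}$ equals the height of $tA+A_{+}$, namely $\dim A=\rg+1$. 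The one genuine subtlety, and the main obstacle, is that when $R=\k[t]$ the ring $A_{*}=R_{*}\otimes_{R}A$ is a localization whose base $R_{*}=\k[t]_{*}$ is not among $\k,\k[t],\k[[t]]$; hence Lemma \ref{l3gf1} cannot be quoted verbatim, and — since $A_{*}$ being Cohen--Macaulay does not force $A$ to be Cohen--Macaulay — the finiteness and flatness statements have to be re-established directly for the pair $(A_{*},C)$.

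For the forward implication of (ii), I would assume $A_{*}$ is a finite free extension of $C=R_{*}[q_{1},\ldots,q_{\rg}]$. Reducing a homogeneous $C$-basis modulo $(q_{1},\ldots,q_{\rg})$ exhibits $A_{*}/(q_{1},\ldots,q_{\rg})$ as a finite free $R_{*}$-module, hence as a graded $R_{*}$-module concentrated in finitely many degrees. Consequently $A_{*}^{[k]}\subseteq (q_{1},\ldots,q_{\rg})$ for all large $k$, so every homogeneous generator of $A_{*,+}=R_{*}A_{+}$ has a power in $(q_{1},\ldots,q_{\rg})$, giving $R_{*}A_{+}\subseteq\sqrt{(q_{1},\ldots,q_{\rg})}$. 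The reverse inclusion is clear, and $A_{*,+}$ is prime because $A_{*}/A_{*,+}=R_{*}$ is a domain; therefore $R_{*}A_{+}=\sqrt{(q_{1},\ldots,q_{\rg})}$. This direction uses neither the Cohen--Macaulay hypothesis nor the regularity of $R_{*}$.

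For the converse in (ii), I would assume $A_{*}$ is Cohen--Macaulay and $R_{*}A_{+}=\sqrt{(q_{1},\ldots,q_{\rg})}$. Since $A_{*,+}$ is finitely generated, the hypothesis gives $A_{*,+}^{N}\subseteq(q_{1},\ldots,q_{\rg})$ for some $N$, so the finitely generated graded $R_{*}$-algebra $A_{*}/(q_{1},\ldots,q_{\rg})$ has nilpotent augmentation ideal; it is therefore spanned over $R_{*}$ by the (finitely many) products of fewer than $N$ of its homogeneous algebra generators, hence is a finite $R_{*}$-module. As $A_{*}/C_{+}A_{*}=A_{*}/(q_{1},\ldots,q_{\rg})$ is finite over $R_{*}=C/C_{+}$, the graded Nakayama lemma shows that lifts of a generating family generate $A_{*}$ over $C$, so $A_{*}$ is a finite $C$-module. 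Then $\dim C=\dim A_{*}=\rg+\dim R_{*}$, and since $C$ is generated by $\rg$ elements over $R_{*}$ this forces $q_{1},\ldots,q_{\rg}$ to be algebraically independent over $R_{*}$; thus $C=R_{*}[q_{1},\ldots,q_{\rg}]$ is a polynomial ring, hence regular. Now $A_{*}$ is Cohen--Macaulay and finite over the regular ring $C$ of equal dimension, so by the miracle-flatness criterion \cite[Ch.~8, Theorem~23.1]{Mat} — exactly as in the proof of Lemma \ref{l3gf1}(iii), but applied to $(A_{*},C)$ — the inclusion $C\hookrightarrow A_{*}$ is flat. A finitely generated graded flat, hence projective, module over the graded ring $C$ whose degree-zero part $R_{*}$ is local is graded free; therefore $A_{*}$ is a finite free extension of $C$, as required.

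Finally I would deduce (i). For its backward implication, if $A_{*}$ is a finite free extension of $C=R_{*}[p_{1},\ldots,p_{\rg}]$ then $C$ is Cohen--Macaulay (being regular), and a finite flat extension of a Cohen--Macaulay ring is Cohen--Macaulay, so $A_{*}$ is Cohen--Macaulay. For the forward implication, suppose $A_{*}$ is Cohen--Macaulay. By Lemma \ref{lgf1}(ii) there is a homogeneous sequence $p_{1},\ldots,p_{\rg}$ in $A_{+}$ with $A_{+}=\sqrt{(p_{1},\ldots,p_{\rg})}$ in $A$; since forming radicals commutes with the localization $A\to A_{*}$ and $R_{*}A_{+}=A_{*,+}$, this gives $R_{*}A_{+}=\sqrt{(p_{1},\ldots,p_{\rg})}$ in $A_{*}$. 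The converse of (ii), just proved, then yields that $A_{*}$ is a finite free extension of $R_{*}[p_{1},\ldots,p_{\rg}]$, completing the proof.
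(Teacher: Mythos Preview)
Your proof is correct. The organization differs from the paper's: you prove (ii) first and deduce (i) from it, whereas the paper proves (i) first by quoting Lemma~\ref{l3gf1}(v), and then handles (ii) by combining (i) with Proposition~\ref{pgf1} for one direction and a height-of-primes argument for the other. Your observation that Lemma~\ref{l3gf1} is stated for $A$ Cohen--Macaulay rather than $A_{*}$ Cohen--Macaulay is well taken; the paper's citation of \ref{l3gf1}(v) for the forward implication of (i) is slightly sloppy on exactly this point, and by reproving the finiteness (via graded Nakayama) and flatness (via miracle flatness) directly for the pair $(A_{*},R_{*}[q_{1},\ldots,q_{\rg}])$ you sidestep that issue cleanly. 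Your route to ``finite free $\Rightarrow$ radical'' via the degree argument (the quotient is concentrated in finitely many degrees) is also more elementary than the paper's argument, which contracts a minimal prime to $R_{*}[q_{1},\ldots,q_{\rg}]$ and compares heights. The paper's approach has the virtue of brevity once Lemma~\ref{l3gf1} and Proposition~\ref{pgf1} are in hand; yours is more self-contained and avoids both Zariski's main theorem (used in Proposition~\ref{pgf1}) and the inductive linear-independence argument of Lemma~\ref{l3gf1}(iv).
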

 
\begin{proof}
(i) The ``only if'' part results from Lemma~\ref{l3gf1}(v). Suppose that for some 
homogeneous sequence $\poi p1{,\ldots,}{\rg}{}{}{}$ in $A_{+}$, 
the algebra $A_{*}$ is a finite free 
extension of $R_{*}[\poi p1{,\ldots,}{\rg}{}{}{}]$. In particular, 
$R_{*}[\poi p1{,\ldots,}{\rg}{}{}{}]$ is a polynomial algebra over $R_{*}$ since 
$A_{*}$ has dimension $\dim A$. Let ${\goth p}$ be a prime ideal of $A_{*}$ and let 
${\goth q}$ be its intersection with $R_{*}[\poi p1{,\ldots,}{\rg}{}{}{}]$. Denote by 
$A_{{\goth p}}$ and $R[\poi p1{,\ldots,}{\rg}{}{}{}]_{{\goth q}}$ the localizations of 
$A_{*}$ and $R_{*}[\poi p1{,\ldots,}{\rg}{}{}{}]$ at ${\goth p}$ and ${\goth q}$ 
respectively. Since $A_{*}$ is a finite extension of 
$R_{*}[\poi p1{,\ldots,}{\rg}{}{}{}]$, these local rings have 
the same dimension. Denote by $d$ this dimension. 
By flatness, any regular sequence
$\poi a1{,\ldots,}{d}{}{}{}$ in $R[\poi p1{,\ldots,}{\rg}{}{}{}]_{{\goth q}}$ is regular
in $A_{{\goth p}}$ so that $A_{{\goth p}}$ is Cohen-Macaulay. Hence $A_{*}$ is 
Cohen-Macaulay.  

(ii) The ``only if'' part results from (i) and Proposition~\ref{pgf1}. Suppose that 
$A_{*}$ is a finite free extension of $R_{*}[\poi q1{,\ldots,}{\rg}{}{}{}]$. Let 
${\goth p}$ be a minimal prime ideal of $A_{*}$ containing $\poi q1{,\ldots,}{\rg}{}{}{}$
and let ${\goth q}$ be its intersection with $R_{*}[\poi q1{,\ldots,}{\rg}{}{}{}]$. Then 
${\goth q}$ is generated by $\poi q1{,\ldots,}{\rg}{}{}{}$. In particular it has height 
$\rg$. So ${\goth p}$ has height $\rg$ since $A_{*}$ is a finite extension of 
$R_{*}[\poi q1{,\ldots,}{\rg}{}{}{}]$. As a result, ${\goth p}=R_{*}A_{+}$ since 
$R_{*}A_{+}$ is a prime ideal of height $\rg$, containing $\poi q1{,\ldots,}{\rg}{}{}{}$,
whence the assertion.
\end{proof}

Recall that $B$ is a graded subalgebra of $A$. 
Set $B_{*} := \tk R{R_{*}}B$ and for ${\goth p}$ a prime ideal of $B$, denote by 
$B_{{\goth p}}$ its localization at ${\goth p}$.

\begin{prop}\label{p2gf1}
Suppose that the following conditions are satisfied:
\begin{enumerate}
\item[{\rm (1)}] $B$ is normal,
\item[{\rm (2)}]  $A_{+}$ is the radical of $AB_{+}$,
\item[{\rm (3)}]  $A$ is Cohen-Macaulay.
\end{enumerate}
\begin{enumerate}
\item[{\rm (i)}] Let $\poi p1{,\ldots,}{\rg}{}{}{}$ be a homogeneous sequence in $B_{+}$ 
such that $B_{+}$ is the radical of the ideal of $B$ generated by this sequence. Then for
some graded subspace $V$ of $A$, having finite dimension, the linear morphisms
$$ \tk {\k}VR_{*}[\poi p1{,\ldots,}{\rg}{}{}{}] \longrightarrow A_{*}, \qquad 
v\tens a \longmapsto va,$$
$$ \tk {\k}{(V\cap B)}R_{*}[\poi p1{,\ldots,}{\rg}{}{}{}] \longrightarrow B_{*}, 
\qquad  v\tens a \longmapsto va $$
are isomorphisms.  
\item[{\rm (ii)}] If $R=\k$ or $R=\k[[t]]$, the algebra $B_{*}$ is Cohen-Macaulay.
\item[{\rm (iii)}]For ${\goth p}$ prime ideal of $B$, containing $t$, the local ring 
$B_{{\goth p}}$ is Cohen-Macaulay.
\end{enumerate}
\end{prop}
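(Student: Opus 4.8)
The plan is to deduce everything from the freeness statement of Lemma~\ref{l3gf1} applied to $A$, combined with a graded $B$-linear retraction $A\to B$ produced from the normality of $B$ and the fact that $\k$ has characteristic zero.

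First I would extract the formal consequences of the hypotheses. Given the homogeneous sequence $\poi p1{,\ldots,}{\rg}{}{}{}$ in $B_{+}$ with $B_{+}=\sqrt{(\poi p1{,\ldots,}{\rg}{}{}{})B}$, condition~(2) forces $A_{+}=\sqrt{(\poi p1{,\ldots,}{\rg}{}{}{})A}$: indeed $(\poi p1{,\ldots,}{\rg}{}{}{})A\subset AB_{+}$ gives $\sqrt{(\poi p1{,\ldots,}{\rg}{}{}{})A}\subset \sqrt{AB_{+}}=A_{+}$, while $B_{+}\subset\sqrt{(\poi p1{,\ldots,}{\rg}{}{}{})A}$ gives $A_{+}=\sqrt{AB_{+}}\subset\sqrt{(\poi p1{,\ldots,}{\rg}{}{}{})A}$. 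By Proposition~\ref{pgf1} applied to the graded subalgebra $R[\poi p1{,\ldots,}{\rg}{}{}{}]$ of $B$ (whose augmentation ideal generates an ideal of $B$ with radical $B_{+}$), $B$ is finite over $R[\poi p1{,\ldots,}{\rg}{}{}{}]$; since $\dim B=\dim A=\rg+\dim R$ this algebra is a polynomial algebra over $R$, hence regular, and $B_{*}$ is a finitely generated $R_{*}[\poi p1{,\ldots,}{\rg}{}{}{}]$-module. As $A$ is Cohen-Macaulay by~(3) and $A_{+}=\sqrt{(\poi p1{,\ldots,}{\rg}{}{}{})A}$, Lemma~\ref{l3gf1}(v) shows that for \emph{any} graded complement $V$ to $J_{\underline{p}}$ in $A$ the multiplication map $\tk{\k}VR_{*}[\poi p1{,\ldots,}{\rg}{}{}{}]\to A_{*}$ is an isomorphism; in particular $A_{*}$ is graded free over $R_{*}[\poi p1{,\ldots,}{\rg}{}{}{}]$.

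The heart of the argument, and the place where normality is used, is the construction of a graded $B$-linear retraction of $B\hookrightarrow A$. Since $A$ is a finite extension of $B$ (Proposition~\ref{pgf1}) and $\k$ has characteristic zero, $K(A)$ is finite separable over $K(B)$, so the trace form is nondegenerate; as $B$ is normal, the trace over $K(B)$ of an element of $A$ lies in $B$. Hence $\pi:=\frac1n\,\mathrm{Tr}_{K(A)/K(B)}$, with $n=[K(A):K(B)]$, is a $B$-linear map $A\to B$ restricting to the identity on $B$. Because the $\gm$-action attached to the grading preserves $K(B)$ and commutes with the trace, $\pi$ is homogeneous of degree $0$, so $A=B\oplus M$ as graded $B$-modules with $M=\ker\pi$. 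Tensoring by $R_{*}$ yields $A_{*}=B_{*}\oplus M_{*}$ as graded $R_{*}[\poi p1{,\ldots,}{\rg}{}{}{}]$-modules, exhibiting $B_{*}$ as a direct summand of the graded free module $A_{*}$, hence as a finitely generated graded projective module. Since $R_{*}[\poi p1{,\ldots,}{\rg}{}{}{}]$ is non-negatively graded with degree-zero part $R_{*}$ (a field or a discrete valuation ring, in any case local), graded projective implies graded free, so $B_{*}$ is graded free over $R_{*}[\poi p1{,\ldots,}{\rg}{}{}{}]$.

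To realize (i) with a single $V$, I would choose $V$ compatibly with the splitting. As the generators of $J_{\underline{p}}$ lie in $B$ and the decomposition $A=B\oplus M$ is $B$-linear, one has $J_{\underline{p}}=(J_{\underline{p}}\cap B)\oplus(J_{\underline{p}}\cap M)$; taking graded complements $V_{B}$ to $J_{\underline{p}}\cap B$ in $B$ and $V_{M}$ to $J_{\underline{p}}\cap M$ in $M$ produces a finite-dimensional graded complement $V:=V_{B}\oplus V_{M}$ to $J_{\underline{p}}$ in $A$ with $V\cap B=V_{B}$. For this $V$ the first isomorphism of~(i) is Lemma~\ref{l3gf1}(v). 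For the second, $V_{B}$ maps isomorphically onto $B_{*}/{\goth m}B_{*}$, where ${\goth m}$ is the graded maximal ideal of $R_{*}[\poi p1{,\ldots,}{\rg}{}{}{}]$, so a homogeneous basis of $V_{B}$ is, by graded Nakayama applied to the graded free module $B_{*}$, a basis of $B_{*}$; hence $\tk{\k}{(V\cap B)}R_{*}[\poi p1{,\ldots,}{\rg}{}{}{}]\to B_{*}$ is an isomorphism.

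Finally, (ii) and (iii) follow formally: $B_{*}$ is finite and free over the regular ring $R_{*}[\poi p1{,\ldots,}{\rg}{}{}{}]$, hence a maximal Cohen-Macaulay module over it, so $B_{*}$ is Cohen-Macaulay. When $R=\k$ or $R=\k[[t]]$ we have $B_{*}=B$, which gives~(ii). For~(iii) with $R=\k[t]$, any prime ${\goth p}$ of $B$ containing $t$ satisfies ${\goth p}\cap R=(t)$, so it survives in the localization $B_{*}=\tk R{R_{*}}B$ and $B_{{\goth p}}$ is a localization of the Cohen-Macaulay ring $B_{*}$, hence Cohen-Macaulay. The only genuinely nontrivial step is the second paragraph: producing the homogeneous $B$-valued trace retraction; once $A=B\oplus M$ is in hand, the rest is a direct reading-off of Lemma~\ref{l3gf1} together with standard facts on graded projective modules and finite free extensions of regular rings.
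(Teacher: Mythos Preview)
Your proof is correct and follows essentially the same route as the paper: both use the normalized trace $\pi=\frac{1}{n}\mathrm{Tr}_{K(A)/K(B)}$ (valid because $B$ is normal and $\mathrm{char}\,\k=0$) to split $A=B\oplus M$ as graded $B$-modules, then choose the complement $V$ to $J_{\underline{p}}$ compatibly with this splitting and invoke Lemma~\ref{l3gf1}(v). The only cosmetic difference is that the paper obtains the second isomorphism by applying the retraction $\pi$ directly to the first isomorphism (so injectivity is inherited from $V\cap B\subset V$ and surjectivity from $\pi(A_{*})=B_{*}$), whereas you take a short detour through ``$B_{*}$ is a graded direct summand of a graded free module, hence graded projective, hence graded free over the graded-local base $R_{*}[\poi p1{,\ldots,}{\rg}{}{}{}]$'' and then Nakayama; both work. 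One small technical slip: you invoke Proposition~\ref{pgf1} with $B$ in the role of the ambient algebra before you know $B$ is finitely generated (noetherian); either first apply Proposition~\ref{pgf1} to the pair $(A,B)$ to get $B$ finitely generated, or, more simply, apply it to $(A,R[\poi p1{,\ldots,}{\rg}{}{}{}])$ to get $A$ finite over $R[\poi p1{,\ldots,}{\rg}{}{}{}]$ and deduce the same for the submodule $B$.
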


\begin{proof}
(i) By Proposition~\ref{pgf1} and by Condition (2), $B$ is finitely generated and $A$ is
a finite extension of $B$. By Condition (2) and by Lemma~\ref{lgf1}(iii), for some 
homogeneous 
sequence $\poi p1{,\ldots,}{\rg}{}{}{}$ in $B_{+}$, $A_{+}$ is the radical of the 
ideal generated by $\poi p1{,\ldots,}{\rg}{}{}{}$. 

Let $\underline{p}$ be as in Lemma \ref{l3gf1}. Denote by $m$ the degree of the extension
$K(A)$ of $K(B)$. For $a$ in $A_{*} \subset K(A)$, set: 
$$a^{\#} := \frac{1}{m} \tr a$$
with $\tr:=\tr_{K(A)/K(B)}$ 
the trace map. By Condition (1), $B_{*}$ is normal and the map 
$a\mapsto a^{\#}$ is a projection from $A_{*}$ onto $B_{*}$ whose restriction to
$A$ is a projection onto $B$. Moreover, it is a graded morphism of $B$-modules. Let $M$ 
be its kernel. Let $J_{0}$ and $J$ be the ideals of $B$ and $A$ generated by 
$\underline{p}$ respectively. Since $t,\poi p1{,\ldots,}{\rg}{}{}{}$ are in $B$, $J$ is 
the direct sum of $J_{0}$ and $MJ_{0}$. Let $V_{0}$ be a graded complement in $B$ to the
$\k$-space $J_{0}$ and let $V_{1}$ be a graded complement in $M$ to the $\k$-space
$MJ_{0}$. Setting $V:=V_{0}+V_{1}$, $V$ is a graded complement in $A$ to the 
$\k$-space $J$. By Condition (3) and Lemma~\ref{l3gf1}, $V$ has finite dimension and
the linear map
$$ \tk {\k}VR_{*}[\poi p1{,\ldots,}{\rg}{}{}{}] \longrightarrow A_{*}, \qquad 
v\tens a \longmapsto va$$
is an isomorphism. So, since $V_{0}=V^{\#}$, the linear map 
$$ \tk {\k}{V_{0}}R_{*}[\poi p1{,\ldots,}{\rg}{}{}{}] \longrightarrow B_{*}, 
\qquad v\tens a \longmapsto va$$
is an isomorphism, whence the assertion.

(ii) results from (i) and Corollary~\ref{cgf1}.

(iii) By (i) and Corollary~\ref{cgf1}, $A_{*}$ is Cohen-Macaulay. For ${\goth p}$ a
prime ideal of $B$, containing $t$, $B_{{\goth p}}$ is the localization of 
$B_{*}$ at the prime ideal $B_{*}{\goth p}$, whence the assertion by (ii).
\end{proof}

\subsection{} \label{gf2}
In this subsection $R=\k[t]$. Then $\widehat{R}=\k[[t]]$. 
For $M$ a graded module over 
$R$ such that $M^{[j]}$ is a free submodule of finite rank for all $j$, we denote by 
$P_{M,R}(T)$ its Hilbert series:
$$ P_{M,R}(T) := \sum_{j\in {\Bbb N}} \rk M^{[j]} T^{j} .$$
For $V$ a graded space over $\k$ such that $V^{[j]}$ has finite dimension, we denote 
by $P_{V,\k}(T)$ its Hilbert series:
$$ P_{V,\k}(T) := \sum_{j\in {\Bbb N}} \dim V^{[j]} T^{j} .$$
Let $S$ be a graded polynomial algebra over $\k$ such that $S^{[0]}=\k$ and 
$S^{[j]}$ has finite dimension for all $j$. Consider on $S[t]$ and $S[[t]]$ the gradings
extending that of $S$ and such that $t$ has degree $0$. 
Consider the following conditions on $A$:
\begin{itemize}
\item [{\rm (C1)}] $A$ is graded subalgebra of $S[t]$,
\item [{\rm (C2)}] for some homogeneous sequence $\poi a1{,\ldots,}{\rg}{}{}{}$ in 
$A_{+}$, $A=\k[t,t^{-1},\poi a1{,\ldots,}{\rg}{}{}{}]\cap S[t]$,
\item [{\rm (C3)}] $A$ is Cohen-Macaulay. 
\end{itemize}
If the condition (C2) holds, then $A[t^{-1}]=R[\poi a1{,\ldots,}{\rg}{}{}{}][t^{-1}]$. 
Moreover, if so, since $A$ has dimension $\rg +1$, then the elements 
$t,\poi a1{,\ldots,}{\rg}{}{}{}$ are algebraically independent over 
$\k$. Set $\widehat{A} := \tk R{\widehat{R}}A$. 

\begin{lemma}\label{lgf2} 
Assume that the conditions {\rm (C1)} and {\rm (C2)} hold. 
\begin{enumerate}
\item[{\rm (i)}] The element $t$ is a prime element of $A$.  
\item[{\rm (ii)}] The algebra $A$ is a factorial ring.
\item[{\rm (iii)}]The Hilbert series of the $R$-module $A$ is equal to
$$ P_{A,R}(T) = \prod_{i=1}^{\rg} \frac{1}{1-T^{d_{i}}},$$
with $\poi d1{,\ldots,}{\rg}{}{}{}$ the degrees of $\poi a1{,\ldots,}{\rg}{}{}{}$
respectively. 
\end{enumerate}
\end{lemma}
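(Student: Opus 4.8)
The plan is to establish (i) first, after which (ii) and (iii) follow by a factoriality criterion and a degreewise rank count, respectively. For (i), I would introduce the evaluation morphism $\varepsilon\colon S[t]\to S$ at $t=0$. Since $t$ has degree $0$, this is a graded algebra morphism with kernel $tS[t]$, and $S$ is an integral domain. The key point is to show that the restriction of $\varepsilon$ to $A$ has kernel exactly $tA$, that is, $A\cap tS[t]=tA$. The inclusion $tA\subseteq A\cap tS[t]$ is immediate since $t\in A^{[0]}=R$. For the reverse inclusion I would take $a\in A\cap tS[t]$ and write $a=tb$ with $b\in S[t]$; condition (C2) gives $a\in\k[t,t^{-1},a_{1},\ldots,a_{\rg}]$, hence $b=t^{-1}a\in\k[t,t^{-1},a_{1},\ldots,a_{\rg}]$, and as also $b\in S[t]$, condition (C2) yields $b\in A$, so $a\in tA$. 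Consequently $\varepsilon$ induces an isomorphism from $A/tA$ onto the subalgebra $\varepsilon(A)$ of the domain $S$, so $tA$ is prime; since $A$ is a domain with $A^{[0]}=\k[t]$, the element $t$ is a nonzero nonunit, hence a prime element of $A$.

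For (ii) I would invoke Nagata's criterion. Recall that $A$ is a finitely generated $R$-algebra, hence Noetherian, and a domain. By the observation following (C2), $A[t^{-1}]=\k[t,t^{-1},a_{1},\ldots,a_{\rg}]$ with $t,a_{1},\ldots,a_{\rg}$ algebraically independent over $\k$, so $A[t^{-1}]$ is a localization of the polynomial algebra $\k[t,a_{1},\ldots,a_{\rg}]$ and is therefore factorial. As $t$ is prime by (i), the multiplicative set $\{t^{n}\}_{n\geq 0}$ is generated by primes, and Nagata's criterion (see e.g.\ \cite{Mat}) gives that $A$ is factorial. For (iii) I would set $B:=R[a_{1},\ldots,a_{\rg}]\subseteq A$; by algebraic independence $B$ is the polynomial $R$-algebra on $a_{1},\ldots,a_{\rg}$, free as a graded $R$-module on the monomials $a_{1}^{m_{1}}\cdots a_{\rg}^{m_{\rg}}$ of degree $m_{1}d_{1}+\cdots+m_{\rg}d_{\rg}$, whence $P_{B,R}(T)=\prod_{i=1}^{\rg}(1-T^{d_{i}})^{-1}$. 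It then remains to prove $\rk_{R}A^{[j]}=\rk_{R}B^{[j]}$ for every $j$. Both are free $R$-modules of finite rank, and since $t$ has degree $0$ localization at $t$ is degreewise, so that $A^{[j]}\otimes_{R}R[t^{-1}]=A[t^{-1}]^{[j]}=B[t^{-1}]^{[j]}=B^{[j]}\otimes_{R}R[t^{-1}]$ using $A[t^{-1}]=B[t^{-1}]$. Flatness of $R[t^{-1}]$ over $R$ preserves ranks, giving $\rk_{R}A^{[j]}=\rk_{R}B^{[j]}$ and hence $P_{A,R}(T)=P_{B,R}(T)$, as claimed.

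I expect the main obstacle to be the identification $A\cap tS[t]=tA$ in (i): this is the only place where the precise form (C2) of $A$ is genuinely used, and it is exactly what makes $A/tA$ embed into the polynomial domain $S$. Once primality of $t$ is in hand, (ii) is a formal application of Nagata's criterion and (iii) is a routine flatness and rank computation.
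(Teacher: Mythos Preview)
Your proof is correct. Parts (i) and (iii) match the paper's argument essentially verbatim: the paper also reduces (i) to the identity $A\cap tS[t]=tA$ via (C2), and (iii) to the rank equality $\rk_R A^{[j]}=\dim_\k \k[a_1,\ldots,a_\rg]^{[j]}$ obtained from $A[t^{-1}]=\k[t,t^{-1},a_1,\ldots,a_\rg]$.

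For (ii) your route differs. The paper argues directly that every height-one prime of $A$ is principal: if $t\in{\goth p}$ then ${\goth p}=tA$ by (i); otherwise ${\goth p}=A\cap A[t^{-1}]{\goth p}$, and since $A[t^{-1}]$ is factorial one picks a generator, strips off any $t$-power in $S[t]$ using (C2), and checks by hand (using primality of $t$) that the resulting element generates a prime ideal of $A$ equal to ${\goth p}$. Your use of Nagata's criterion packages exactly this reasoning into a single citation: the same two ingredients ($t$ prime in $A$, and $A[t^{-1}]$ factorial) feed into Nagata rather than into an explicit height-one analysis. Your argument is shorter and cleaner; the paper's has the virtue of being self-contained and of exhibiting the generator of each height-one prime explicitly.
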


\begin{proof}
(i) Let $a$ and $b$ be in $A$ such that $ab$ is in $tA$. 
Since $tS[t]$ is a prime ideal
of $S[t]$, $a$ or $b$ is in $tS[t]$. Suppose $a=ta'$ for some $a'$ in $S[t]$. Then 
$a'$ is in $A[t^{-1}]$. By Condition (C2), 
$A[t^{-1}]=R[\poi a1{,\ldots,}{\rg}{}{}{}][t^{-1}]$. Hence $a'$ is in $A$ by 
Condition (C2) again. As a result, $At$ is a prime ideal of $A$. 

(ii) Since $A$ is finitely generated, it suffices to prove that all prime ideal
of height $1$ is principal by~\cite[Ch. 7, Theorem 20.1]{Mat}. Let ${\goth p}$ be 
a prime ideal of height $1$. If $t$ is in ${\goth p}$, then ${\goth p}=At$ by (i). 
Suppose that $t$ is not in ${\goth p}$ and set 
$\overline{{\goth p}}=A[t^{-1}]{\goth p}$. Then $\overline{{\goth p}}$ is a prime ideal
of height $1$ of $R[\poi a1{,\ldots,}{\rg}{}{}{}][t^{-1}]$ by Condition (C2). For $a$ in 
$\overline{{\goth p}}$, $t^{m}a$ is in ${\goth p}$ for some nonnegative integer $m$. 
Hence 
$${\goth p}=\overline{{\goth p}} \cap A $$
since ${\goth p}$ is prime. As a polynomial ring over the principal 
ring $\k[t,t^{-1}]$, the ring 
$R[\poi a1{,\ldots,}{\rg}{}{}{}][t^{-1}]$ is a factorial ring. Then 
$\overline{{\goth p}}$ is generated by an element $a$ in ${\goth p}$. Since $S$ is
a polynomial ring, $S[t]$ is a factorial ring. So, for some nonnegative integer $m$ and
for some $a'$ in $S[t]$, prime to $t$, $a=t^{m}a'$. By Condition (C2), $a'$ is in $A$. 
Then $a'$ is an element of ${\goth p}$, generating $\overline{{\goth p}}$ and not 
divisible by $t$ in $A$. Let $b$ and $c$ be in $A$ such that $bc$ is in $Aa'$. Then $b$ 
or $c$ is in $A[t^{-1}]a'$. Suppose $b$ in $A[t^{-1}]a'$. So, for some $l$ in ${\Bbb N}$, 
$t^{l}b=b'a'$ for some $b'$ in $A$. We choose $l$ minimal satisfying this condition. 
By (i), since $a'$ is not divisible by $t$ in $A$, $b'$ is divisible by $t$ in $A$ if 
$l>0$. By minimality of $l$, $l=0$ and $b$ is in $Aa'$. As a result, $Aa'$ is a prime 
ideal and ${\goth p}=Aa'$ since ${\goth p}$ has height $1$.

(iii) By Condition (C2), 
$$ A[t^{-1}] = \tk {\k}{\k[t,t^{-1}]}\k[\poi a1{,\ldots,}{\rg}{}{}{}] \quad  
\text{whence} \quad \rk A^{[d]} = \dim \k[\poi a1{,\ldots,}{\rg}{}{}{}]^{[d]}$$
for all nonnegative integer $d$. Since $\poi a1{,\ldots,}{\rg}{}{}{}$ are algebraically 
independent over $\k$,
$$ P_{\k[\poi a1{,\ldots,}{\rg}{}{}{}],\k}(T) = \prod_{i=1}^{\rg} \frac{1}{1-T^{d_{i}}} 
,$$
whence the assertion.
\end{proof}

Let $\poi p1{,\ldots,}{\rg}{}{}{}$ be a homogeneous sequence in $A$ such that $A_{+}$ is 
the radical of the ideal of $A$ generated by this sequence. By Lemma~\ref{lgf1}(ii), 
such a sequence does exist. Denote by $C$ the integral closure of 
$\k[\poi p1{,\ldots,}{\rg}{}{}{}]$ in $\k(t,\poi a1{,\ldots,}{\rg}{}{}{})$. 

\begin{lemma}\label{l2gf2} 
Assume that the conditions {\rm (C1)}, {\rm (C2)} and {\rm (C3)} hold. 
\begin{enumerate}
\item[{\rm (i)}] The algebra $C$ is a graded subalgebra of $A$ and $t$ is not algebraic 
over $C$.
\item[{\rm (ii)}] The algebra $C$ is Cohen-Macaulay. Moreover, $C$ is a finite free 
extension of $\k[\poi p1{,\ldots,}{\rg}{}{}{}]$.
\item[{\rm (iii)}] The algebra $C+tA$ is normal.
\end{enumerate}
\end{lemma}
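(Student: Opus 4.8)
Throughout I use that $K(A)=\k(t,\poi a1{,\ldots,}{\rg}{}{}{})$ by the remark following (C2), and that $A$ is factorial, hence normal, by Lemma~\ref{lgf2}(ii). Write $P:=\k[\poi p1{,\ldots,}{\rg}{}{}{}]$ and $\widetilde P:=R[\poi p1{,\ldots,}{\rg}{}{}{}]=P[t]$. For (i), first note that $C\subseteq A$: every element of $K(A)$ integral over $P\subseteq A$ is integral over $A$, hence lies in $A$ because $A$ is normal. The ring $C$ is graded since the integral closure of the graded ring $P$ inside the graded field $K(A)$ is graded. For the transcendence claim I would apply Proposition~\ref{pgf1} to the graded subalgebra $\widetilde P$: one has $A\widetilde P_+=(\poi p1{,\ldots,}{\rg}{}{}{})A$, whose radical is $A_+$, so $A$ is finite over $\widetilde P$ and $\dim\widetilde P=\dim A=\rg+1$. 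Hence $t,\poi p1{,\ldots,}{\rg}{}{}{}$ are algebraically independent over $\k$ and $K(A)$ is algebraic over $\k(t,\poi p1{,\ldots,}{\rg}{}{}{})$. Since $C$ is algebraic over $K(P)$, of transcendence degree $\rg$, while $K(A)\supseteq \k(t,\poi p1{,\ldots,}{\rg}{}{}{})$ has transcendence degree $\rg+1$, the element $t$ cannot be algebraic over $C$, for otherwise $K(A)$ would be algebraic over $C(t)$ of transcendence degree $\rg$.

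For (ii), let $\overline F$ be the algebraic closure of $F:=K(P)$ in $K(A)$. As $K(A)/F$ is finitely generated and $\mathrm{char}\,\k=0$, the extension $\overline F/F$ is finite, so $C$, being the integral closure of $P$ in $\overline F$, is a finite $P$-module. The key device is $C[t]$: it is normal and integral over $\widetilde P$ with fraction field $\overline F(t)$, so it equals the integral closure of $\widetilde P$ in $\overline F(t)$, namely $A\cap\overline F(t)$ (recall $A$ itself is the integral closure of $\widetilde P$ in $K(A)$). The normalized trace $\pi:=\tfrac1m\,\mathrm{tr}_{K(A)/\overline F(t)}$, with $m=[K(A):\overline F(t)]$, sends $A$ into $\overline F(t)\cap A=C[t]$ and restricts to the identity there, giving a $\widetilde P$-linear projection $A\to C[t]$. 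By Lemma~\ref{l3gf1}(iii), $A$ is flat — hence free — over the polynomial ring $\widetilde P$, so its $\widetilde P$-module direct summand $C[t]$ is projective, hence free, over $\widetilde P$. Reducing modulo $t$ gives $C=C[t]/tC[t]$ free over $P=\widetilde P/(t)$; a finite free module over the regular ring $P$ is Cohen--Macaulay, which settles (ii).

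For (iii) I would argue by Serre's criterion after first pinning down the normalization. Since $\widetilde P\subseteq C+tA\subseteq A$ and $A$ is finite over $\widetilde P$, the ring $A$ is integral over $C+tA$; as $A$ is normal and $K(C+tA)=K(A)$, the ring $A$ is precisely the integral closure of $C+tA$. Thus normality of $C+tA$ is equivalent to $C+tA=A$, i.e.\ to surjectivity of the injection $C\cong(C+tA)/tA\hookrightarrow A/tA$, where $A/tA$ is a domain because $t$ is prime by Lemma~\ref{lgf2}(i) and $C$ is normal by (ii). In fraction-field terms this is the equality $\overline F(t)=K(A)$, that is, $t$ generates $K(A)$ over the constant field $\overline F$.

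I expect this last equality to be the crux of the whole lemma, and the main obstacle. The point is to show that the homogeneous system of parameters $\poi p1{,\ldots,}{\rg}{}{}{}$, together with the grading in which $t$ has degree $0$, forces $K(A)$ to be generated over $\overline F$ by $t$ alone, so that the generic fibre of $A$ over $\widetilde P$ along $V(t)$ carries no extra residue field beyond $\overline F$. The argument must use (C1)--(C2) in full (hence the factoriality of $A$ and the primeness of $t$) as well as the radical condition $A_+=\sqrt{(\poi p1{,\ldots,}{\rg}{}{}{})A}$; once $\overline F(t)=K(A)$ is established, $C+tA=A$ follows immediately and normality is inherited from that of $A$.
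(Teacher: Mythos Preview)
Your arguments for (i) and (ii) are correct. Part (i) is essentially the paper's proof. Part (ii) is a genuinely cleaner alternative: the paper passes to $\k[[t]]$, applies Proposition~\ref{p2gf1} to show $C[[t]]$ is Cohen--Macaulay, and then extracts the freeness of $C$ over $P$ via Lemma~\ref{l3gf1}; your direct trace-splitting of $A$ over $\widetilde P$ bypasses this detour. (Minor remark: freeness of the finitely generated graded projective $\widetilde P$-module $C[t]$ is immediate in the graded setting, no Quillen--Suslin needed.)

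Part (iii), however, has a genuine gap --- and you identify it yourself. You reduce normality of $C+tA$ to the equality $\overline F(t)=K(A)$, but you do not prove this equality. In fact, via your own observation that $A$ is the normalization of $C+tA$, this equality is \emph{equivalent} to (iii), so you have reformulated the problem rather than solved it. There is no evident direct route to $\overline F(t)=K(A)$: the $a_i$ lie in $S[t]$, and their $t$-coefficients have no a priori reason to be algebraic over $K(P)$; the radical condition on $\poi p1{,\ldots,}{\rg}{}{}{}$ does not obviously force this.

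The paper proceeds in the opposite logical order. It proves (iii) directly by Serre's criterion on $\tilde A:=C+tA$: for primes ${\goth p}$ not containing $t$ one has $\tilde A_{\goth p}$ a localization of the regular ring $A[t^{-1}]$; for primes containing $t$, $\tilde A_{\goth p}/t\tilde A_{\goth p}$ is a localization of $C$, Cohen--Macaulay by (ii), so $\tilde A_{\goth p}$ is Cohen--Macaulay. For regularity in codimension $1$, the only height-$1$ prime containing $t$ is $t\tilde A$, and $\tilde A_{t\tilde A}=K(C)+t\tilde A_{t\tilde A}$ is a DVR with uniformizer $t$. Only \emph{after} (iii) is established does the paper deduce, in Corollary~\ref{cgf2}, that $A=\tilde A$ (hence $K(A)=\overline F(t)$), using that $A$ is finite over the now-normal $\tilde A$ with the same fraction field. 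So the equality you were aiming for is a consequence of the lemma, not a step in its proof.

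A smaller lacuna: your isomorphism $C\cong(C+tA)/tA$ presupposes $C\cap tA=0$, which is not automatic since the $p_i$ --- and hence $C$ --- may involve $t$ nontrivially as elements of $S[t]$. The paper establishes this via the explicit free-module decomposition from Proposition~\ref{p2gf1}(i).
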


\begin{proof}
(i) By Lemma~\ref{lgf2}(ii), $A$ is a normal ring such that 
$K(A)=\k(t,\poi a1{,\ldots,}{\rg}{}{}{})$ by Condition (C2). Then $C$ is contained in 
$A$ since $\k[\poi p1{,\ldots,}{\rg}{}{}{}]$ is contained in $A$. Moreover, 
$C$ is a graded algebra since so is $\k[\poi p1{,\ldots,}{\rg}{}{}{}]$. By Proposition
\ref{pgf1}, $A$ is a finite extension of $R[\poi p1{,\ldots,}{\rg}{}{}{}]$. So, since
$A$ has dimension $\rg+1$, 
the elements 
$t,\poi p1{,\ldots,}{\rg}{}{}{}$ are algebraically independent
over $\k$. As a result, $t$ is not algebraic over $C$.

(ii) By (i), $C[[t]]=\tk {\k}{C}\k[[t]]$ so that $C[[t]]$ is a flat extension of 
$\k[[t]]$. Moreover, $C$ is the quotient of $C[[t]]$ by $tC[[t]]$. As $C$ and $\k[[t]]$ 
are normal rings, $C[[t]]$ is a normal ring by 
\cite[Ch. 8, Corollary of Theorem 23.9]{Mat}. By definition, 
$A_{+}$ is the radical of the ideal of $A$ generated by $\poi p1{,\ldots,}{\rg}{}{}{}$.
As $\k[[t]]$ is a flat extension of $\k[t]$, from the short exact sequence
$$ \xymatrix{ 0 \ar[r] & A_{+} \ar[r] & A \ar[r] & \k[t] \ar[r] & 0}$$
we deduce the short exact sequence
$$ \xymatrix{ 0 \ar[r] & \widehat{A}_{+} \ar[r] & \widehat{A} \ar[r] & 
\k[[t]] \ar[r] & 0}.$$
Hence $\widehat{A}_{+}$ is a prime ideal. As $A_{+}$ is the radical of the ideal generated
by the sequence $\poi p1{,\ldots,}{\rg}{}{}{}$, $\widehat{A}_{+}$ is contained in
the radical of $AC[[t]]_{+}$. Then, by (i), $\widehat{A}_{+}$ is the radical of 
$AC[[t]]_{+}$. Since $\widehat{R}$ is a flat extension of $R$, the algebra 
$\widehat{A}$ is Cohen-Macaulay by Condition~(C3). Then, by 
Proposition~\ref{p2gf1}(ii), 
$C[[t]]$ is Cohen-Macaulay. Let $V$ be a graded complement in $C$ to the ideal
of $C$ generated by $\poi p1{,\ldots,}{\rg}{}{}{}$. Since $t$ is not algebraic over 
$C$, the space $V$ is a complement in $C[t]$ to the ideal of $C[t]$ generated by 
$t,\poi p1{,\ldots,}{\rg}{}{}{}$. Then, by Lemma~\ref{l3gf1}, $V$ has finite dimension
and the linear morphism 
$$ \tk {\k}VR_{*}[\poi p1{,\ldots,}{\rg}{}{}{}] \longrightarrow R_{*}C, \qquad 
v\tens a \longmapsto va$$
is an isomorphism. As a result, the linear morphism 
$$ \tk {\k}V\k[\poi p1{,\ldots,}{\rg}{}{}{}] \longrightarrow C, \qquad 
v\tens a \longmapsto va$$
is an isomorphism, whence the assertion by Corollary~\ref{cgf1}(ii).

(iii) Set $\tilde{A} := C+tA$. At first, $\tilde{A}$ is a graded subalgebra of $A$ since 
$C$ is a graded algebra and $tA$ is a graded ideal of $A$. According to 
Proposition~\ref{p2gf1}(i), for some graded subspace $V$ of $A$, 
having finite dimension, the linear morphisms
$$ \tk {\k}VR_{*}[\poi p1{,\ldots,}{\rg}{}{}{}] \longrightarrow A_{*}, 
\qquad 
v\tens a \longmapsto va,$$
$$ \tk {\k}{(V\cap C[t])}R_{*}[\poi p1{,\ldots,}{\rg}{}{}{}] \longrightarrow R_{*}C, 
\qquad v\tens a \longmapsto va $$
are isomorphisms. Let $\poi v1{,\ldots,}{n}{}{}{}$ be a basis of $V$ such that
$\poi v1{,\ldots,}{m}{}{}{}$ is a basis of $V\cap C[t]$. For $a$ in $A_{*}$, 
the element $a$ 
has unique expansion
$$ a = v_{1}a_{1} + \cdots + v_{n}a_{n}$$
with $\poi a1{,\ldots,}{n}{}{}{}$ in $R_{*}[\poi p1{,\ldots,}{\rg}{}{}{}]$. If $a$ 
is in $tA_{*}$, $\poi a1{,\ldots,}{n}{}{}{}$ are in 
$tR_{*}[\poi p1{,\ldots,}{\rg}{}{}{}]$ and if $a$ is in $R_{*}C$, 
$\poi a1{,\ldots,}{m}{}{}{}$ are in $\k[\poi p1{,\ldots,}{\rg}{}{}{}]$
and $\poi a{m+1}{,\ldots,}{n}{}{}{}$ are equal to $0$, whence 
$R_{*}C\cap tA_{*}=tR_{*}C$ and 
$C\cap tA = \{0\}$. In particular, $C$ is the quotient of $\tilde{A}$ by $t\tilde{A}$.

For ${\goth p}$ a prime ideal of $\tilde{A}$, denote by $\tilde{A}_{{\goth p}}$ the 
localization of $\tilde{A}$ at ${\goth p}$. If $t$ is not in ${\goth p}$, then 
$A[t^{-1}]$ is contained in $\tilde{A}_{{\goth p}}$ so that $\tilde{A}_{{\goth p}}$
is a localization of the regular algebra $R[\poi a1{,\ldots,}{\rg}{}{}{}][t^{-1}]$
by Condition (C2). Hence $\tilde{A}_{{\goth p}}$ is a regular local algebra. Suppose
that $t$ is in ${\goth p}$. Denote by $\overline{{\goth p}}$ the image of ${\goth p}$
in $C$ by the quotient map. Then $\tilde{A}_{{\goth p}}/t\tilde{A}_{{\goth p}}$ is 
the localization $C_{\overline{{\goth p}}}$ of $C$ at the prime ideal 
$\overline{{\goth p}}$. Since $C$ is Cohen-Macaulay, so are $C_{\overline{{\goth p}}}$
and $\tilde{A}_{{\goth p}}$. As a result, $\tilde{A}$ is Cohen-Macaulay.

Let ${\goth p}$ be a prime ideal of height $1$ of $\tilde{A}$. If $t$ is not in 
${\goth p}$,  $\tilde{A}_{{\goth p}}$ is a regular local algebra as it is already
mentioned. Suppose that $t$ is in ${\goth p}$. By Lemma~\ref{lgf2}(i), 
$t\tilde{A}={\goth p}$ so that all element of $C\setminus \{0\}$ is invertible in
$\tilde{A}_{{\goth p}}$, whence 
$$\tilde{A}_{{\goth p}}=K(C)+t\tilde{A}_{{\goth p}} \quad  \text{and} \quad 
t\tilde{A}_{{\goth p}} = tK(C) + t^{2}\tilde{A}_{{\goth p}} .$$
Hence $\tilde{A}_{{\goth p}}$ is a regular local ring of dimension $1$. As a result,
$\tilde{A}$ is regular in codimension $1$. Then, by Serre's normality
criterion~\cite[\S 1, \no 10, Th\'eor\`eme 4]{Bou}, $\tilde{A}$ is normal since 
$\tilde{A}$ is Cohen-Macaulay.
\end{proof}

\begin{coro}\label{cgf2}
Assume that the conditions {\rm (C1)}, {\rm (C2)} and {\rm (C3)} hold. 
\begin{enumerate}
\item[{\rm (i)}] The algebra $\widehat{A}$ is equal to $C[[t]]$.
\item[{\rm (ii)}] For $a$ in $A$, the element $ra$ is in $C[t]$ for some $r$ in $\k[t]$ 
such that $r(0) \neq 0$.
\end{enumerate}
\end{coro}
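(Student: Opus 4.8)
The plan is to prove both assertions of Corollary~\ref{cgf2} by leveraging the structural results established in Lemma~\ref{l2gf2}, particularly the normality of $\tilde A = C + tA$ and the fact that $C$ is the quotient of $\tilde A$ by $t\tilde A$. First I would address (i). Since Lemma~\ref{l2gf2}(ii) tells us that $C$ is a finite free extension of $\k[\poi p1{,\ldots,}{\rg}{}{}{}]$, the ring $C[[t]] = \tk{\k}{C}\k[[t]]$ is a natural candidate. The key observation is that $\widehat A$ and $C[[t]]$ share the same fraction field structure after inverting $t$, and both are normal: $\widehat A = \tk R{\widehat R}A$ is normal because $A$ is factorial by Lemma~\ref{lgf2}(ii) and $\widehat R = \k[[t]]$ is a flat (indeed faithfully flat) extension, while $C[[t]]$ is normal as shown inside the proof of Lemma~\ref{l2gf2}(ii). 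The inclusion $C[[t]] \subset \widehat A$ follows from $C \subset A$. To get equality, I would argue that $\widehat A_+$ is the radical of the ideal generated by $\poi p1{,\ldots,}{\rg}{}{}{}$ in $\widehat A$ (established in the proof of Lemma~\ref{l2gf2}(ii) via the short exact sequence argument), and then use that $C[[t]]$ already contains a system of parameters realizing $\widehat A_+$ as a radical, so by the finiteness from Proposition~\ref{pgf1} and the normality of both rings, the inclusion $C[[t]] \hookrightarrow \widehat A$ must be an isomorphism.

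For part (ii), the strategy is to transfer the equality $\widehat A = C[[t]]$ from the completion $\k[[t]]$ back down to the localization $R = \k[t]$. Given $a$ in $A$, viewing $a$ inside $\widehat A = C[[t]]$ expresses $a$ as a power series with coefficients in $C$; but since $a \in A$ is a polynomial object, I would like to clear denominators to land in $C[t]$. The natural tool here is Lemma~\ref{l2gf1}: taking $M = A$ (a torsion-free $\k[t]$-module, since $A$ is an integral domain containing $\k[t]$) and $N = C[t]$ as a submodule of $M$, I would show that $a \in \widehat{N} \cap M$. Indeed $\widehat{N} = \widehat{C[t]} = C[[t]] = \widehat A$ by part (i), so $a \in A = M$ lies in $\widehat N \cap M$ automatically. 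Then Lemma~\ref{l2gf1} immediately yields an element $r \in \k[t]$ with $r(0) \neq 0$ such that $ra \in N = C[t]$, which is exactly the claim.

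The main obstacle I anticipate is establishing the equality $\widehat A = C[[t]]$ in part (i) cleanly, rather than merely the inclusion $C[[t]] \subset \widehat A$. The subtlety is that $C$ is defined as the integral closure of $\k[\poi p1{,\ldots,}{\rg}{}{}{}]$ in $\k(t,\poi a1{,\ldots,}{\rg}{}{}{})$, so its elements a priori may involve $t$ in their fraction-field expressions, and one must be careful that passing to $\widehat R = \k[[t]]$ does not enlarge $C[[t]]$ beyond $\widehat A$ or leave a gap. I expect the resolution to come from comparing Hilbert series or dimensions: both $\widehat A$ and $C[[t]]$ are finite extensions of $\widehat R[\poi p1{,\ldots,}{\rg}{}{}{}]$ of the same rank (computable via Lemma~\ref{lgf2}(iii) and the free-extension statement of Lemma~\ref{l2gf2}(ii)), both are normal, and $C[[t]] \subset \widehat A$; a normal finite extension that is contained in another normal extension of the same fraction field and same generic rank must coincide with it. Once this rank-and-normality comparison is in place, part (i) follows, and part (ii) is then a routine application of Lemma~\ref{l2gf1} as described.
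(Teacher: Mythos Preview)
Your plan for part~(ii) is exactly the paper's: once~(i) is known, Lemma~\ref{l2gf1} with $M=A$ and $N=C[t]$ finishes immediately.

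For part~(i), however, there is a genuine gap. You want to conclude $C[[t]]=\widehat{A}$ from the inclusion $C[[t]]\subset\widehat{A}$, normality of both, and finiteness of $\widehat{A}$ over $C[[t]]$. That deduction requires $K(C[[t]])=K(\widehat{A})$, which is equivalent to $K(C)(t)=K(A)$, and you propose to get this by a rank comparison over $\widehat{R}[\poi p1{,\ldots,}{\rg}{}{}{}]$. But Lemma~\ref{l2gf2}(ii) only tells you that $C$ is finite free over $\k[\poi p1{,\ldots,}{\rg}{}{}{}]$; it does \emph{not} compute the rank, and Lemma~\ref{lgf2}(iii) gives the Hilbert series of $A$, not of $C$. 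In the paper the Hilbert series of $C$ is obtained only \emph{after} this corollary (in the proof of Proposition~\ref{pgf2}), so invoking it here is circular. A priori one only knows $[K(A):\k(t,\poi p1{,\ldots,}{\rg}{}{}{})]=[K(C):\k(\poi p1{,\ldots,}{\rg}{}{}{})]\cdot [K(A):K(C)(t)]$, and nothing you cite forces the second factor to be~$1$.

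The paper sidesteps this completely by working one level down: it compares $A$ not with $C[[t]]$ but with $\tilde{A}=C+tA$. Since $tA\subset\tilde{A}\subset A$, inverting $t$ gives $\tilde{A}[t^{-1}]=A[t^{-1}]$, so $K(\tilde{A})=K(A)$ for free. Then normality of $\tilde{A}$ (Lemma~\ref{l2gf2}(iii)) together with finiteness of $A$ over $\tilde{A}$ (Proposition~\ref{pgf1}, using that $C_+\subset\tilde{A}_+$ makes $A_+$ the radical of $A\tilde{A}_+$) yields $A=\tilde{A}$. From $A=C+tA$ one gets $A\subset C[t]+t^mA$ for all $m$ by induction, and passing to the $t$-adic completion degree by degree gives $\widehat{A}=C[[t]]$. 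The fraction-field equality that blocked your argument at the completed level is trivial at the level of $\tilde{A}$; that is the missing idea.
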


\begin{proof}
(i) Since $tA$ is contained in $A$, we have $K(A)=K(\tilde{A})$. Since $C_{+}$ is 
contained in $\tilde{A}_{+}$, $A_{+}$ is the radical of $A\tilde{A}_{+}$. Then, by 
Proposition~\ref{pgf1}, $A$ is a finite extension of $\tilde{A}$. So, by 
Lemma~\ref{l2gf2}(iii), $A=\tilde{A}$ and by induction on $m$, 
$$ A \subset C[t] + t^{m}A$$
for all positive integer $m$. Since $A$ and $C[t]$ are graded and since the $R$-module 
$A^{[d]}$ is finitely generated for all $d$, $\widehat{A}=C[[t]]$.

(ii) The assertion results from (i) and Lemma~\ref{l2gf1}.
\end{proof}

\begin{prop}\label{pgf2} 
Assume that the conditions {\rm (C1)}, {\rm (C2)} and {\rm (C3)} hold. Then the algebra 
$A_{*}$ is polynomial over $R_{*}$. Moreover, for some homogeneous sequence 
$\poi q1{,\ldots,}{\rg}{}{}{}$ in $A_{+}$ such that $\poi q1{,\ldots,}{\rg}{}{}{}$ have 
degree $\poi d1{,\ldots,}{\rg}{}{}{}$ respectively, 
$A_{*}=R_{*}[\poi q1{,\ldots,}{\rg}{}{}{}]$.
\end{prop}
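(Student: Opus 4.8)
The plan is to reduce the statement to showing that the graded $\k$-algebra $C$ of Lemma~\ref{l2gf2} is a polynomial algebra on homogeneous generators of degrees $d_{1},\ldots,d_{\rg}$, and then to transport this structure to $A_{*}$.

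First I would record two consequences of the preceding results. On one hand, by Corollary~\ref{cgf2}(i) one has $\widehat{A}=C[[t]]$, so that
$$A/tA \;=\; \widehat{A}/t\widehat{A}\;=\;C[[t]]/tC[[t]]\;\cong\;C$$
as graded $\k$-algebras; in particular $C^{[0]}=\k$. Since each $A^{[j]}$ is a free $R$-module of rank $\rk A^{[j]}$, the fibre $A^{[j]}/tA^{[j]}$ has $\k$-dimension $\rk A^{[j]}$, whence
$$P_{C,\k}(T)=P_{A,R}(T)=\prod_{i=1}^{\rg}\frac{1}{1-T^{d_{i}}}$$
by Lemma~\ref{lgf2}(iii). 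On the other hand, since $C\subset A$ and $t\in A$, one has $C[t]\subset A$, and Corollary~\ref{cgf2}(ii) gives $A\subset C[t]_{*}$; hence $A_{*}=C\otimes_{\k}R_{*}$. Thus it suffices to prove that $C=\k[q_{1},\ldots,q_{\rg}]$ for some homogeneous $q_{1},\ldots,q_{\rg}$ of degrees $d_{1},\ldots,d_{\rg}$, for then $A_{*}=R_{*}[q_{1},\ldots,q_{\rg}]$ is polynomial over $R_{*}$, the $q_{i}$ lying in $C^{[d_{i}]}\subset A_{+}$.

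The heart of the argument is to produce a homogeneous system of parameters of $C$ with the prescribed degrees $d_{1},\ldots,d_{\rg}$. I would take the images $\overline{a}_{1},\ldots,\overline{a}_{\rg}$ in $C=A/tA$ of the elements $a_{1},\ldots,a_{\rg}$ of Condition~(C2); they are homogeneous of degrees $d_{1},\ldots,d_{\rg}$. To see they form a system of parameters, consider the finitely generated graded $\k[t]$-module $M:=A/(a_{1},\ldots,a_{\rg})A$. By Condition~(C2) one has $M[t^{-1}]=\k[t,t^{-1}]$, so modulo its $t$-torsion submodule, which is finite-dimensional over $\k$, the module $M$ is isomorphic to $\k[t]$; consequently $M/tM$ is finite-dimensional over $\k$. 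As $M/tM=A/(t,a_{1},\ldots,a_{\rg})A=C/(\overline{a}_{1},\ldots,\overline{a}_{\rg})C$, the elements $\overline{a}_{1},\ldots,\overline{a}_{\rg}$ cut out an ideal of $C$ whose radical is $C_{+}$; being $\rg=\dim C$ in number, they are a homogeneous system of parameters.

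Finally I would feed this into the Cohen--Macaulay bookkeeping. Since $C$ is Cohen--Macaulay of dimension $\rg$ by Lemma~\ref{l2gf2}(ii), the system of parameters $\overline{a}_{1},\ldots,\overline{a}_{\rg}$ is a regular sequence and $C$ is a free module over the polynomial subalgebra $\k[\overline{a}_{1},\ldots,\overline{a}_{\rg}]$. Comparing Hilbert series gives
$$\prod_{i=1}^{\rg}\frac{1}{1-T^{d_{i}}}=P_{C,\k}(T)=P_{C/(\overline{a}_{1},\ldots,\overline{a}_{\rg}),\k}(T)\,\prod_{i=1}^{\rg}\frac{1}{1-T^{d_{i}}},$$
so the Artinian reduction $C/(\overline{a}_{1},\ldots,\overline{a}_{\rg})$ has Hilbert series $1$, that is, equals $\k$. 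Hence $C$ is free of rank one over $\k[\overline{a}_{1},\ldots,\overline{a}_{\rg}]$, so $C=\k[\overline{a}_{1},\ldots,\overline{a}_{\rg}]$, and since $\dim C=\rg$ these generators are algebraically independent, whence $C$ is polynomial. Taking $q_{i}:=\overline{a}_{i}$ then yields $A_{*}=R_{*}[q_{1},\ldots,q_{\rg}]$ with $\deg q_{i}=d_{i}$. I expect the step producing a system of parameters of the exact degrees $d_{i}$ — concretely, the control of the $t$-torsion of $M$ — to be the main technical point; once it is in hand, everything is forced by the Hilbert-series identity.
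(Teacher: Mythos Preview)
Your reduction to $C$ and the final Hilbert-series bookkeeping are both correct, but the ``main technical point'' you flag is not merely technical: it is false as stated. The assertion that the $t$-torsion of $M=A/(a_{1},\ldots,a_{\rg})A$ is finite-dimensional over $\k$ (equivalently, that $M/tM$ is finite-dimensional, or that the images $\overline{a}_{1},\ldots,\overline{a}_{\rg}$ form a system of parameters of $C$) does not follow from Conditions (C1)--(C3).

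Here is a counterexample. Take $\rg=2$, $S=\k[x,y]$ with $\deg x=\deg y=1$, $A=\k[t,x,y]$, and $a_{1}=x+ty$, $a_{2}=x$. Then $\k[t,t^{-1},a_{1},a_{2}]=\k[t,t^{-1},x,y]$ and $A=\k[t,t^{-1},x,y]\cap S[t]$, so (C1)--(C3) hold. One computes
\[
M=A/(a_{1},a_{2})=\k[t,x,y]/(x,ty)\cong \k[t,y]/(ty),
\]
whose $t$-torsion is $\bigoplus_{k\geq 1}\k\,y^{k}$, infinite-dimensional; and $M/tM\cong\k[y]$. In $C=A/tA=\k[x,y]$ one has $\overline{a}_{1}=\overline{a}_{2}=x$, so $C/(\overline{a}_{1},\overline{a}_{2})=\k[y]$ is not Artinian and the $\overline{a}_{i}$ are not a system of parameters. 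The proposition is of course still true here ($A_{*}=R_{*}[x,y]$), but not with your choice $q_{i}=\overline{a}_{i}$.

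The paper avoids this by not specialising at $t=0$. Using Corollary~\ref{cgf2}(ii) one writes $r_{i}a_{i}=\sum_{m}c_{i,m}t^{m}$ as a \emph{finite} sum with $c_{i,m}\in C^{[d_{i}]}$, and sets $b_{i}(z)=\sum_{m}c_{i,m}z^{m}\in C^{[d_{i}]}$ for $z\in\k$. Algebraic independence of $t,a_{1},\ldots,a_{\rg}$ forces the morphism ${\rm Spec}(C[t])\to\mathbb{A}^{\rg+1}$ given by $(t,r_{1}a_{1},\ldots,r_{\rg}a_{\rg})$ to be dominant, hence for \emph{some} $z\in\k$ the elements $b_{1}(z),\ldots,b_{\rg}(z)$ are algebraically independent; your $\overline{a}_{i}$ are (up to nonzero scalar) the case $z=0$, which may fail. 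Once one has algebraically independent $q_{i}:=b_{i}(z)\in C^{[d_{i}]}$, your Hilbert-series comparison finishes the proof.
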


\begin{proof}
According to Corollary~\ref{cgf2} and Lemma~\ref{l2gf2}(i), it suffices to prove
that $C$ is a polynomial algebra over $\k$ generated by a homogeneous sequence 
$\poi q1{,\ldots,}{\rg}{}{}{}$ such that $\poi q1{,\ldots,}{\rg}{}{}{}$ have degree
$\poi d1{,\ldots,}{\rg}{}{}{}$ respectively. According to Corollary~\ref{cgf2}(i) 
Lemma~\ref{l2gf2}(i) and Lemma~\ref{lgf2}(iii),
$$ P_{C,\k}(T) = \prod_{i=1}^{\rg} \frac{1}{1-T^{d_{i}}} .$$
By Corollary~\ref{cgf2}(ii), for $i=1,\ldots,\rg$, for some $r_{i}$ in $R$ such 
that $r_{i}(0)\neq 0$, $r_{i}a_{i}$ has an expansion
$$ r_{i}a_{i} = \sum_{m\in {\Bbb N}} c_{i,m} t^{m}$$
with $c_{i,m},m\in {\Bbb N}$ in $C^{[d_{i}]}$, with finite support. For $z$ in $\k$ and 
$i=1,\ldots,\rg$, set:
$$ b_{i}(z) = \sum_{m\in {\Bbb N}} c_{i,m} z^{m} $$
so that $b_{i}(z)$ is in $C^{[d_{i}]}$ for all $z$. As already mentioned, 
$t,\poi a1{,\ldots,}{\rg}{}{}{}$ are algebraically independent over $\k$ by
Condition (C2) since $A$ has dimension $\rg +1$. Then, so are 
$t,r_{1}a_{1},\ldots,r_{\rg}a_{\rg}$ and for some $z$ in $\k$, 
$\poi z{}{,\ldots,}{}{b}{1}{\rg}$ are algebraically independent over $\k$. Denoting
by $C'$ the subalgebra of $C$ generated by this sequence,
$$ P_{C',\k}(T) = \prod_{i=1}^{\rg} \frac{1}{1-T^{d_{i}}},$$
whence $C=C'$ so that $C$ is a polynomial algebra.
\end{proof}

\section{Proof of Theorem \ref{ti6}} \label{sg}
In this section, unless otherwise specified, the grading on $\es S{{\goth g}^{e}}$ is 
the Slodowy grading. 
 
For $m$ a nonnegative integer, $\es S{{\goth g}^{e}}^{[m]}$ denotes the space of degree 
$m$ of $\es S{{\goth g}^{e}}$. We retain the notations of the introduction, 
in particular of Subsection \ref{ss:plan}. 

\subsection{} \label{sg1}
Let $R$ be the ring $\k[t]$. As in Section~\ref{gf}, for $M$ a graded subspace of 
$\es S{{\goth g}^{e}}[t]=\tk \k{R}\es S{{\goth g}^{e}}$, its subspace of degree $m$ is 
denoted by $M^{[m]}$. In particular, $\es S{{\goth g}^{e}}[t]^{[m]}$ is equal to 
$\es S{{\goth g}^{e}}^{[m]}[t]$ and it is a free $R$-module of finite rank. As a result, 
for all graded $R$-submodule $M$ of $\es S{{\goth g}^{e}}[t]$, its Hilbert series is well
defined. 

For $m$ a nonnegative integer, denote by $F_{m}$ the space of elements of 
$\kappa (\ai g{}{})$ whose component of minimal standard degree is at least $m$.
Then $\poi F0{,}{1}{}{}{},\ldots$ is a decreasing filtration of the algebra 
$\kappa (\ai g{}{})$. Let $\poi d1{,\ldots,}{\rg}{}{}{}$ be the standard degrees
of a homogeneous generating sequence of $\ai g{}{}$. We assume that the sequence 
$\poi d1{,\ldots,}{\rg}{}{}{}$ is increasing. 

Recall that $A$ is the intersection of $\es S{{\goth g}^{e}}[t]$ with the 
sub-$\k[t,t^{-1}]$-module of $\es S{{\goth g}^{e}}[t,t^{-1}]$ 
generated by $\tau \rond \kappa(\e Sg^\g)$, and that $A_+$ is the augmentation ideal of 
$A$.

\begin{lemma}\label{lsg1}
\begin{enumerate}
\item[{\rm (i)}] For $p$ a homogeneous element of standard degree $d$ in $\ai g{}{}$, the
element $\kappa (p)$ and $\ie p$ have degree $2d$.
\item[{\rm (ii)}] For some homogeneous sequence $\poi a1{,\ldots,}{\rg}{}{}{}$ in 
$A_{+}$, the elements $t,\poi a1{,\ldots,}{\rg}{}{}{}$ are algebraically independent over
$\k$, and $A$ is the intersection of ${\rm S}({\goth g}^{e})[t]$ with  
$\k[t,t^{-1},\poi a1{,\ldots,}{\rg}{}{}{}]$.
\item[{\rm (iii)}]The Hilbert series of the $R$-algebra $A$ is equal to
$$ P_{A,R}(T) = \prod_{i=1}^{\rg} \frac{1}{1-T^{2d_{i}}} .$$
\item[{\rm (iv)}] The Hilbert series of the $\k$-algebra $\varepsilon (A)$ is equal to
$$ P_{\varepsilon (A),\k}(T) = \prod_{i=1}^{\rg} \frac{1}{1-T^{2d_{i}}} .$$
\item[{\rm (v)}] The subalgebra $\varepsilon (A)$ is the graded algebra associated with 
the filtration $\poi F0{,}{1}{}{}{},\ldots$.
\end{enumerate}
\end{lemma}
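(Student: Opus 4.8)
The plan is to prove the five items in order, the backbone being that under $\tau\rond\kappa$ the variable $t$ bookkeeps the standard degree, so that specialising $t=0$ carries each restriction $\kappa(p)$ onto its initial component $\ie{p}$. \emph{Item (i).} Let $p\in\ai g{}{}$ be homogeneous of standard degree $d$. Since $p$ is $G$-invariant and homogeneous, the computation of Subsection~\ref{ss:plan} gives, for $y\in{\goth g}^{f}$ and $t\in\k^{*}$,
$$\kappa(p)(t^{-2}\rho(t)(e+y)) = t^{-2d}\,p(\rho(t)(e+y)) = t^{-2d}\,\kappa(p)(e+y),$$
using homogeneity for the scalar $t^{-2}$ and $G$-invariance for $\rho(t)\in G$. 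Thus $\kappa(p)$ lies in the weight space of weight $-2d$ of the $\gm$-action $t\mapsto t^{-2}\rho(t)$ on $\k[{\cal S}_{e}]$, i.e. $\kappa(p)$ is Slodowy homogeneous of degree $2d$. Grouping the monomials $x^{{\bf j}}$ of $\kappa(p)$ by standard degree, each standard component is a sum of monomials of Slodowy degree $2d$; in particular $\ie{p}$ is Slodowy homogeneous of degree $2d$.

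\emph{Item (ii).} I first record that $\kappa(q_{1}),\ldots,\kappa(q_{\rg})$ are algebraically independent over $\k$. By \cite[Corollary~2.3]{PPY} there are $\poi r1{,\ldots,}{\rg}{}{}{}$ in $\ai g{}{}$ with $\poi{\ie r}1{,\ldots,}{\rg}{}{}{}$ algebraically independent; as $\ie{r_{i}}$ is the initial component of $\kappa(r_{i})$, the $\kappa(r_{i})$ are independent, so $\kappa(\ai g{}{})$ has transcendence degree $\rg$, whence $\kappa$ is injective on $\ai g{}{}$ and the $\kappa(q_{i})$ are independent. Since $\tau\rond\kappa$ is an algebra morphism and the $q_{i}$ generate $\ai g{}{}$, the sub-$\k[t,t^{-1}]$-module generated by $\tau\rond\kappa(\ai g{}{})$ is the subalgebra $\k[t,t^{-1},\tau\kappa(q_{1}),\ldots,\tau\kappa(q_{\rg})]$. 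Put $m_{i}:=\deg\ie{q_{i}}$ and
$$a_{i}:=t^{-m_{i}}\tau(\kappa(q_{i}))=\ie{q_{i}}+t\,(\cdots)\in\es S{{\goth g}^{e}}[t].$$
By (i) each $a_{i}$ is Slodowy homogeneous of degree $2d_{i}$ and lies in $A_{+}$, and $\k[t,t^{-1},\poi a1{,\ldots,}{\rg}{}{}{}]=\k[t,t^{-1},\tau\kappa(q_{1}),\ldots,\tau\kappa(q_{\rg})]$; hence $A=\es S{{\goth g}^{e}}[t]\cap\k[t,t^{-1},\poi a1{,\ldots,}{\rg}{}{}{}]$. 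Finally $\tau$ extends to a graded $\k$-automorphism of $\es S{{\goth g}^{e}}[t,t^{-1}]$ fixing $t$, so $t,\tau\kappa(q_{1}),\ldots$ and $t,\kappa(q_{1}),\ldots$ are simultaneously independent; the latter holds as $t$ is transcendental over $\es S{{\goth g}^{e}}$. Hence $t,\poi a1{,\ldots,}{\rg}{}{}{}$ are algebraically independent.

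\emph{Items (iii) and (iv).} Each $A^{[m]}$ is a submodule of the free $R$-module $\es S{{\goth g}^{e}}^{[m]}[t]$ of finite rank, hence free of finite rank over the PID $R=\k[t]$; moreover ${\rm rk}_{R}A^{[m]}={\rm dim}_{\k(t)}(A[t^{-1}])^{[m]}$. By (ii), $A[t^{-1}]=\k[t,t^{-1},\poi a1{,\ldots,}{\rg}{}{}{}]$ is free over $\k[t,t^{-1}]$ on the monomials in the $a_{i}$, so ${\rm rk}_{R}A^{[m]}={\rm dim}_{\k}\k[\poi a1{,\ldots,}{\rg}{}{}{}]^{[m]}$ and $P_{A,R}(T)=\prod_{i=1}^{\rg}(1-T^{2d_{i}})^{-1}$, proving (iii). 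For (iv) the crucial point is the saturation $A\cap t\es S{{\goth g}^{e}}[t]=tA$: if $ts\in A$ with $s\in\es S{{\goth g}^{e}}[t]$, then $s=t^{-1}(ts)$ lies in $\k[t,t^{-1},\poi a1{,\ldots,}{\rg}{}{}{}]$ and in $\es S{{\goth g}^{e}}[t]$, hence in $A$. Therefore $\ker(\varepsilon|_{A})=tA$ and $\varepsilon(A)\cong A/tA$; freeness of $A^{[m]}$ gives ${\rm dim}_{\k}\varepsilon(A)^{[m]}={\rm rk}_{R}A^{[m]}$, so $P_{\varepsilon(A),\k}(T)=P_{A,R}(T)$, proving (iv).

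\emph{Item (v), and the main obstacle.} Write $B:=\kappa(\ai g{}{})$. For every $p\in\ai g{}{}$, putting $j=\deg\ie{p}$, the element $t^{-j}\tau(\kappa(p))$ lies in $A$ and has $\varepsilon$-image $\ie{p}$, so $\varepsilon(A)$ contains the subalgebra $\langle\ie{p}:p\in\ai g{}{}\rangle$ of $\es S{{\goth g}^{e}}$. Conversely, the filtration $\poi F0{,}{1}{}{}{},\ldots$ respects the Slodowy grading — distinct Slodowy components cannot cancel, so the initial standard degree of an element is the minimum over its Slodowy components — and the symbol map identifies ${\rm gr}_{F}B$ with that same subalgebra $\langle\ie{p}\rangle$, preserving the Slodowy Hilbert series of $B$. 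Since $\kappa$ is injective, $B=\k[\kappa(q_{1}),\ldots,\kappa(q_{\rg})]$ is polynomial on generators of Slodowy degrees $2d_{i}$, so that series equals $\prod_{i=1}^{\rg}(1-T^{2d_{i}})^{-1}$. Comparing with (iv), the inclusion $\langle\ie{p}\rangle\subseteq\varepsilon(A)$ between two $\N$-graded algebras with equal, finite-dimensional graded pieces is an equality; hence $\varepsilon(A)=\langle\ie{p}\rangle={\rm gr}_{F}B$, which is (v). I expect the delicate step to be precisely this last identification: one must check that $F_{\bullet}$ is compatible with the Slodowy grading, so that passing to ${\rm gr}_{F}$ preserves Hilbert series degreewise, and then that the two independently computed series coincide — the algebraic independence of the $\kappa(q_{i})$ being the input that makes both equal to $\prod_{i=1}^{\rg}(1-T^{2d_{i}})^{-1}$.
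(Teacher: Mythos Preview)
Your proof is correct. Parts (i)--(iv) follow essentially the paper's route, with two minor variations: in (ii) you deduce injectivity of $\kappa\vert_{\ai g{}{}}$ from \cite[Corollary~2.3]{PPY} (existence of some $r_i$ with $\ie{r_i}$ algebraically independent) rather than from the dominance of $G\times{\cal S}_e\to{\goth g}$ as the paper does; and in (iv) you make the saturation $A\cap t\es S{{\goth g}^{e}}[t]=tA$ explicit, which the paper uses without comment.

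The genuine difference is in (v). The paper establishes both inclusions directly: for $\gr_F B\subset\varepsilon(A)$ it lifts a symbol $\overline a\in F_m/F_{m+1}$ to $t^{-m}\tau(a)\in A$; for $\varepsilon(A)\subset\gr_F B$ it asserts that for $a\in A$ with $\varepsilon(a)\neq 0$, the evaluation $a(1)\in B$ has $\varepsilon(a)$ as its lowest standard-degree component. You instead prove only the easy inclusion $\gr_F B=\langle\ie p\rangle\subset\varepsilon(A)$ and then close the argument by a Hilbert series comparison: the compatibility of $F_\bullet$ with the Slodowy grading gives $P_{\gr_F B,\k}=P_{B,\k}$, and since $B$ is polynomial on Slodowy-degree $2d_i$ generators this equals $\prod_i(1-T^{2d_i})^{-1}=P_{\varepsilon(A),\k}$ by (iv). Your approach sidesteps the need to analyse $\varepsilon(a)$ for an arbitrary $a\in A$ and is self-contained once (iv) is in hand; the paper's approach is more direct but its key claim about $\varepsilon(a)$ versus $a(1)$ is terse and, taken literally for non--Slodowy-homogeneous $a$, would require a little more care.
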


\begin{proof}
(i) Let $\rho$ be as in Subsection \ref{ss:plan}. 
For $y$ in ${\goth g}^{f}$ and $s$ in $\k^{*}$,
$$ p(s^{-2}\rho (s)(e+y)) = s^{-2d}p(\rho (s)(e+y)) = s^{-2d}p(e+y)$$
since $p$ is invariant under the one-parameter subgroup $\rho$. Hence 
$\kappa (p)$ is homogeneous of degree $2d$. 
Since the monomials $x^{{\bf j}}$ are homogeneous, $\ie p$ has degree $2d$. 

(ii) Let $\poi q1{,\ldots,}{\rg}{}{}{}$ be a homogeneous generating sequence of 
$\ai g{}{}$. By a well known fact 
(cf.~e.g.~\cite[Lemma 4.4(i)]{CM1}), the morphism 
$$ G \times (e+{\goth g}^{f}) \longrightarrow {\goth g}, \qquad (g,x) \longmapsto g(x)$$
is dominant. Then $\kappa (\ai g{}{})$ is a polynomial algebra generated by 
$\poi q1{,\ldots,}{\rg}{\kappa }{}{}$. So, setting $a_{i}:=\tau \rond \kappa (q_{i})$
for $i=1,\ldots,\rg$, the sequence $\poi a1{,\ldots,}{\rg}{}{}{}$ is a homogeneous 
sequence in $A_{+}$ such that 
$$\tau \rond \kappa (\ai g{}{})[t,t^{-1}] = \k[t,t^{-1},\poi a1{,\ldots,}{\rg}{}{}{}] .$$
Let $\overline{\tau }$ be the automorphism of $\es S{{\goth g}^{e}}[t,t^{-1}]$ extending 
$\tau $ and such that $\overline{\tau }(t)=t$. Then 
$$ \tau \rond \kappa (\ai g{}{})[t,t^{-1}] = 
\overline{\tau }(\kappa (\ai g{}{})[t,t^{-1}]) .$$  
Since $\kappa (\ai g{}{})[t,t^{-1}]$ has dimension $\rg +1$, 
$\tau \rond \kappa (\ai g{}{})[t,t^{-1}]$ has dimension $\rg +1$ too, 
and $t,\poi a1{,\ldots,}{\rg}{}{}{}$ are 
algebraically independent over $\k$. By definition, 
$A=\es S{{\goth g}^{e}}[t]\cap \tau \rond \kappa (\ai g{}{})[t,t^{-1}]$. Hence
$$ A[t^{-1}] = \k[t,t^{-1},\poi a1{,\ldots,}{\rg}{}{}{}] \quad  \text{and} \quad
A = \es S{{\goth g}^{e}}[t] \cap \k[t,t^{-1},\poi a1{,\ldots,}{\rg}{}{}{}] .$$

(iii) Since $t$ has degree $0$, the grading of $\es S{{\goth g}^{e}}[t]$ extends to 
a grading of $\es S{{\goth g}^{e}}[t,t^{-1}]$ such that for all $m$, its space of 
degree $m$ is equal to $\es S{{\goth g}^{e}}^{[m]}[t,t^{-1}]$. Then for all 
$\k[t,t^{-1}]$-submodule $M$ of $\es S{{\goth g}^{e}}[t,t^{-1}]$, $M$ has a 
Hilbert series:
$$ P_{M,\k[t,t^{-1}]}(T) := \sum_{m\in {\Bbb N}} \rk M^{[m]} T^{m}$$
with $M^{[m]}$ the subspace of degree $m$ of $M$. From the equality
$A[t^{-1}] = \k[t,t^{-1},\poi a1{,\ldots,}{\rg}{}{}{}]$, we deduce
$$ P_{A[t^{-1}],\k[t,t^{-1}]}(T) = \prod_{i=1}^{\rg} \frac{1}{1-T^{2d_{i}}}$$
since for $i=1,\ldots,\rg$, the element $a_{i}$ has degree $2d_{i}$ by (i). For all $m$, 
the rank of the $R$-module $A^{[m]}$ is equal to the rank of the $\k[t,t^{-1}]$-module 
$A[t^{-1}]^{[m]}$, whence
$$ P_{A,R}(T) = \prod_{i=1}^{\rg} \frac{1}{1-T^{2d_{i}}} .$$

(iv) Let $m$ be a nonnegative integer. The $R$-module $A^{[m]}$ is free of 
finite rank and for $(\poi v1{,\ldots,}{n}{}{}{})$ a basis of this module, 
$(\poi {tv}1{,\ldots,}{n}{}{}{})$ is a basis of the $R$-module $tA^{[m]}$. Since 
$\varepsilon (A)^{[m]}$ is the quotient of $A^{[m]}$ by $tA^{[m]}$, 
$$ \dim \varepsilon (A)^{[m]} = n = \rk A^{[m]},$$
whence the assertion by (iii).

(v) Let ${\rm gr}_FA$ be the graded algebra associated with the filtration 
$\poi F0{,}{1}{}{}{},\ldots$ of $\kappa (\ai g{}{})$. 
Denote by $a\mapsto a(1)$ the evaluation map at $t=1$ from $\es S{{\goth g}^{e}}[t]$ to 
$\es S{{\goth g}^{e}}$. For $a$ in $A$ such that $\varepsilon (a)\neq 0$, $a(1)$ is in 
$\kappa (\ai g{}{})$ and $\varepsilon (a)$ is the component of minimal degree of $a(1)$ 
with respect to the standard grading, whence $\varepsilon (A)\subset {\rm gr}_F A$. 
Conversely, let $\overline{a}$ be a homogeneous element of degree $m$ of ${\rm gr}_F A$ 
and let $a$ be a representative of $\overline{a}$ in $F_{m}$. Then $\tau (a)=t^{m}b$ with
$b$ in $A$ such that $\varepsilon (b)=\overline{a}$, whence 
${\rm gr}_F A \subset \varepsilon (A)$ and the assertion.  
\end{proof}

Let $R_{*}$ be the localization of $R$ at the prime ideal $tR$ and set 
$$  \widehat{R} := \k[[t]], \qquad A_{*} := \tk R{R_{*}}A, \qquad 
\widehat{A} := \tk R{\widehat{R}}A .$$
The grading of $A$ extends to gradings on $A_{*}$ and $\widehat{A}$
such that $A_{*}^{[0]}=R_{*}$ and $\widehat{A}^{[0]}=\widehat{R}$. 

\begin{prop}\label{psg1}
\begin{enumerate}
\item[{\rm (i)}] The algebra $\varepsilon (A)$ is polynomial if and only if 
for some standard homogeneous generating sequence $\poi q1{,\ldots,}{\rg}{}{}{}$ of 
$\ai g{}{}$, the elements $\poi {\ie q}1{,\ldots,}{\rg}{}{}{}$ are algebraically 
independent over $\k$. Moreover, in this case, $A$ is a polynomial algebra. 
\item[{\rm (ii)}] If $A_{*}$ is a polynomial algebra over $R_{*}$, then for some 
homogeneous sequence $\poi p1{,\ldots,}{\rg}{}{}{}$ in $A_{+}$, we 
have $A_{*} = R_{*}[\poi p1{,\ldots,}{\rg}{}{}{}]$, the elements 
$t,\poi p1{,\ldots,}{\rg}{}{}{}$ are algebraically independent over $\k$ and 
$\poi p1{,\ldots,}{\rg}{}{}{}$ have degree $\poi {2d}1{,\ldots,}{\rg}{}{}{}$ 
respectively.
\end{enumerate}
\end{prop}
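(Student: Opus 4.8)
The plan is to extract everything from the Hilbert series computations of Lemma~\ref{lsg1} together with the identification $\varepsilon(A)=\gr_{F}B$ of Lemma~\ref{lsg1}(v), where $B:=\kappa(\ai g{}{})$. Recall that $A$ is a graded integral domain, flat over $R=\k[t]$ with $A^{[0]}=R$, that by Lemma~\ref{lsg1}(ii) the conditions (C1) and (C2) hold, and that $P_{A,R}(T)=\prod_{i=1}^{\rg}(1-T^{2d_{i}})^{-1}$ by Lemma~\ref{lsg1}(iii).

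I would first settle (ii), which is pure Hilbert series bookkeeping. Since each $A^{[m]}$ is a free $R$-module of finite rank, localizing at $tR$ preserves ranks, so $A_{*}$ has the same Hilbert series $\prod_{i=1}^{\rg}(1-T^{2d_{i}})^{-1}$ over $R_{*}$. If $A_{*}$ is polynomial over $R_{*}$, this series forces a homogeneous system of generators to consist of $\rg$ elements of degrees $\poi {2d}1{,\ldots,}{\rg}{}{}{}$; choosing such generators and clearing denominators by units of $R_{*}$ yields a sequence $\poi p1{,\ldots,}{\rg}{}{}{}$ in $A_{+}$ with $A_{*}=R_{*}[\poi p1{,\ldots,}{\rg}{}{}{}]$ and $\deg p_{i}=2d_{i}$. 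As this is a polynomial algebra over $R_{*}$, the $p_{i}$ are algebraically independent over the fraction field $\k(t)$ of $R_{*}$, whence $t,\poi p1{,\ldots,}{\rg}{}{}{}$ are algebraically independent over $\k$.

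For (i), I use that $\varepsilon(A)=\gr_{F}B$ is spanned, hence generated, by the initial components $\ie p$, $p\in\ai g{}{}$, each of which is bihomogeneous for the standard and the Slodowy grading, so that $\varepsilon(A)$ is bigraded. For the easy implication, if $\poi q1{,\ldots,}{\rg}{}{}{}$ is a homogeneous generating sequence of $\ai g{}{}$ with $\poi {\ie q}1{,\ldots,}{\rg}{}{}{}$ algebraically independent, then $\k[\poi {\ie q}1{,\ldots,}{\rg}{}{}{}]$ is a polynomial algebra whose Hilbert series is $\prod_{i=1}^{\rg}(1-T^{2d_{i}})^{-1}$ by Lemma~\ref{lsg1}(i); being contained in $\varepsilon(A)$, which has the same Hilbert series by Lemma~\ref{lsg1}(iv), it must equal $\varepsilon(A)$, so $\varepsilon(A)$ is polynomial. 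Conversely, suppose $\varepsilon(A)$ is polynomial. Being bigraded, it admits bihomogeneous minimal generators $\poi \xi1{,\ldots,}{\rg}{}{}{}$, and the Hilbert series forces their Slodowy degrees to be $\poi {2d}1{,\ldots,}{\rg}{}{}{}$. Each $\xi_{i}$ then lies in a single piece $F_{m_{i}}/F_{m_{i}+1}$ of Slodowy degree $2d_{i}$, hence is the symbol of a Slodowy-homogeneous element of $B$, namely $\xi_{i}=\ie{q_{i}}$ for some homogeneous $q_{i}\in\ai g{}{}$ of standard degree $d_{i}$. Since $\poi {\ie q}1{,\ldots,}{\rg}{}{}{}$ generate $\gr_{F}B$, the standard filtered-to-graded lifting argument shows that $\poi q1{,\ldots,}{\rg}{\kappa}{}{}$ generate $B$, so $\poi q1{,\ldots,}{\rg}{}{}{}$ generate $\ai g{}{}$; and $\poi {\ie q}1{,\ldots,}{\rg}{}{}{}$ are algebraically independent as generators of a polynomial algebra.

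Finally, for the ``moreover'' in (i), I would lift the polynomial generators $\varepsilon(\xi_{i})$ of $\varepsilon(A)$ to homogeneous $\xi_{i}\in A$ of degree $2d_{i}$. Any nontrivial relation over $R$ among the $\xi_{i}$ may be taken not divisible by $t$, and reducing it modulo $t$ would give a nontrivial relation among the $\varepsilon(\xi_{i})$; hence the $\xi_{i}$ are algebraically independent over $R$, and $R[\poi \xi1{,\ldots,}{\rg}{}{}{}]$ is a polynomial subalgebra of $A$ with Hilbert series $\prod_{i=1}^{\rg}(1-T^{2d_{i}})^{-1}=P_{A,R}(T)$. A graded inclusion of free $R$-modules of equal finite rank in every degree is an equality, so $A=\k[t,\poi \xi1{,\ldots,}{\rg}{}{}{}]$ is polynomial. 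The main obstacle is the converse in (i): one must realize the abstract polynomial generators of $\varepsilon(A)$ as initial components $\ie{q_{i}}$ of a genuine generating sequence of $\ai g{}{}$, which requires handling the standard and Slodowy gradings simultaneously and invoking the filtered-to-graded principle to transfer generation from $\gr_{F}B$ back to $B$.
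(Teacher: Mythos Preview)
Your arguments for (ii), for both implications in (i), and your identification of the main obstacle are correct, and they follow a somewhat different route from the paper. You lean on the equality of Hilbert series (Lemma~\ref{lsg1}(iii),(iv)) to read off at once the number and the degrees of generators, and in the converse of (i) you lift bihomogeneous generators through the identification $\varepsilon(A)=\gr_{F}B$ and then push generation back to $B$ by a bounded filtered--to--graded argument; the paper instead lifts through $A$ (writing $u_i=\varepsilon(r_i)$ and using $A[t^{-1}]=\tau\rond\kappa(\ai g{}{})[t,t^{-1}]$), and only recovers $d=\rg$ at the end. Your path is a bit more economical, at the cost of needing to check that $B$ and each $F_m$ are Slodowy--graded so that bihomogeneous lifts exist.

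There is, however, a genuine gap in your ``moreover''. The assertion ``a graded inclusion of free $R$--modules of equal finite rank in every degree is an equality'' is false for $R=\k[t]$: equal rank only gives equality after tensoring with $\k(t)$. Concretely, if $Q\in A$ is a homogeneous generator and you lift $\varepsilon(Q)$ to $\xi=(1+t)Q$, then $R[\xi]$ and $A$ have the same Hilbert series and agree modulo $t$, yet $Q\notin R[\xi]$. So for an arbitrary homogeneous lift one gets only $A_{*}=R_{*}[\xi_1,\ldots,\xi_\rg]$, not $A=R[\xi_1,\ldots,\xi_\rg]$. The fix is to use the \emph{specific} lifts $Q_i:=t^{-2d_i}\tau\rond\kappa(q_i)\in A$ and argue directly: for $a\in A$ one has $t^{l}a=\sum_{({\bf i},m)}c_{{\bf i},m}t^{m}Q^{{\bf i}}$ in $\k[t,t^{-1},Q_1,\ldots,Q_\rg]$, and comparing the lowest $t$--power with the linear independence of the monomials $\ie q^{{\bf i}}$ forces $c_{{\bf i},m}=0$ for $m<l$, hence $a\in R[Q_1,\ldots,Q_\rg]$. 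This is exactly how the paper obtains $A=R[Q_1,\ldots,Q_\rg]$.
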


\begin{proof}
(i) Let $\poi q1{,\ldots,}{\rg}{}{}{}$ be a homogeneous generating sequence of 
$\ai g{}{}$ such that $\poi {\ie q}1{,\ldots,}{\rg}{}{}{}$ are algebraically 
independent over $\k$. We can assume that for $i=1,\ldots,\rg$, $q_i$ has standard degree
$d_i$. For $i=1,\ldots,\rg$, $\ie{q_i}$ has degree $2 d_i$ by Lemma~\ref{lsg1}(i), and 
we set 
$$Q_{i} := t^{- 2 d_i}\tau \rond \kappa (q_{i}).$$ 
Then $Q_i$, for $i=1,\ldots,\rg$, is in $A$ by definition of $A$. 
For ${\bf i}=(\poi i1{,\ldots,}{\rg}{}{}{})$ in ${\Bbb N}^{\rg}$, set:
$$ q^{{\bf i}} := \poie q1{\cdots}{\rg}{}{}{}{i_{1}}{i_{\rg}}, \qquad
Q^{{\bf i}} := \poie Q1{\cdots}{\rg}{}{}{}{i_{1}}{i_{\rg}}, \qquad 
\ie q^{{\bf i}} := \poie {\ie q}1{\cdots}{\rg}{}{}{}{i_{1}}{i_{\rg}},$$
$$\vert {\bf i} \vert_{{\mathrm {min}}} := 
2 i_{1} d _{1} + \cdots + 2 i_{\rg} d _{\rg} .$$
Then, for all ${\bf i}$ in ${\Bbb N}^{\rg}$,
$$ \tau \rond \kappa (q^{{\bf i}}) = t^{\vert {\bf i} \vert_{{\mathrm {min}}}} 
Q^{{\bf i}}.$$
Moreover, 
$$\tau \rond \kappa (\ai g{}{})[t,t^{-1}] = \k[t,t^{-1},\poi Q1{,\ldots,}{\rg}{}{}{}]. $$
Let $a$ be in $A$. For some $l$ in ${\Bbb N}$ and for some sequence 
$c_{{\bf i},m}, \; ({\bf i},m)\in {\Bbb N}^{\rg}\times {\Bbb N}$ in $\k$, 
of finite support,
$$ t^{l}a = \sum_{({\bf i},m) \in {\Bbb N}^{\rg}\times {\Bbb N}}
c_{{\bf i},m} t^{m}Q^{{\bf i}} \quad  \text{whence} \quad 
\sum_{{\bf i}\in {\Bbb N}^{\rg}} c_{{\bf i},m} \ie q^{{\bf i}} = 0 $$
for $m<l$. Hence $a$ is in $R[\poi Q1{,\ldots,}{\rg}{}{}{}]$ since the elements 
$\ie q^{{\bf i}}$, ${\bf i}\in {\Bbb N}^{\rg}$ are linearly independent over $\k$. As a 
result,
$$ A = R[\poi Q1{,\ldots,}{\rg}{}{}{}] \quad  \text{and} \quad 
\varepsilon (A) = \k[\poi {\ie q}1{,\ldots,}{\rg}{}{}{}] $$
so that $A$ and $\varepsilon (A)$ are polynomial algebras over $\k$ since 
$\poi {\ie q}1{,\ldots,}{\rg}{}{}{}$ are algebraically independent over $\k$. 

Conversely, suppose that $\varepsilon (A)$ is a polynomial algebra. By 
Lemma~\ref{lsg1}, (i) and (iv), the algebra 
$\varepsilon (A)$ is graded for both 
Slodowy grading and standard grading. Let 
$d$ be the dimension of $\varepsilon (A)$. As $\varepsilon (A)$ is a polynomial algebra,
it is regular so that the $\k$-space $\varepsilon (A)_{+}/\varepsilon (A)_{+}^{2}$ has 
dimension $d$. Moreover, the two gradings on $\varepsilon (A)$ induce gradings on 
$\varepsilon (A)_{+}/\varepsilon (A)_{+}^{2}$. Hence 
$\varepsilon (A)_{+}/\varepsilon (A)_{+}^{2}$ has a bihomogeneous basis. Then some
bihomogeneous sequence $\poi u1{,\ldots,}{d}{}{}{}$ in $\varepsilon (A)_{+}$ represents
a basis of $\varepsilon (A)_{+}/\varepsilon (A)_{+}^{2}$. As a result, the $\k$-algebra
$\varepsilon (A)$ is generated by the bihomogeneous sequence $\poi u1{,\ldots,}{d}{}{}{}$.
For $i=1,\ldots,d$, denote by $\delta _{i}$ the Slodowy degree of $u_{i}$. As 
$\varepsilon $ is homogeneous with respect to the Slodowy grading, 
$u_{i}=\varepsilon (r_{i})$ for some homogeneous element $r_{i}$ of degree $\delta _{i}$
of $A$. Let $m_{i}$ be the smallest nonnegative integer such that $t^{m_{i}}r_{i}$ is 
in $\tau \rond \kappa (\ai g{}{})$. According to Lemma~\ref{lsg1}(i), $\delta _{i}$ is
even and for some standard homogeneous element $p_{i}$ of standard degree $\delta _{i}/2$
of $\ai g{}{}$, $t^{m_{i}}r_{i}=\tau \rond \kappa (p_{i})$. Then $u_{i}=\ie {p_{i}}$ 
since $p_{i}$ is standard homogeneous.

Let ${\goth P}$ be the subalgebra of $\e Sg$ generated by $\poi p1{,\ldots,}{d}{}{}{}$.
Suppose that ${\goth P}$ is strictly contained in $\ai g{}{}$. A contradiction is 
expected. For some positive integer $m$, the space $\ai g{}{}_{m}$ of standard degree 
$m$ of $\ai g{}{}$ is not contained in ${\goth P}$. Let $q$ be in 
$(\ai g{}{})_{m}\setminus {\goth P}$ such that $\ie q$ has maximal standard degree. By 
Lemma~\ref{lsg1}(i), $\ie q$ is a polynomial in $\poi u1{,\ldots,}{d}{}{}{}$, of degree 
$2m$. So, for some polynomial $q'$ of degree $m$ in ${\goth P}$, $\ie (q-q')$ has 
standard degree bigger than the standard degree of $\ie q$. So, by maximality of the 
standard degree of $\ie q$, the elements $q-q'$ and $q$ are in ${\goth P}$, whence the 
contradiction. As a result, ${\goth P}=\ai g{}{}$ and $d=\rg$.

(ii) Suppose that $A_{*}$ is a polynomial algebra. Denoting by $J$ the ideal of $A_{*}$
generated by $t$ and $A_{+}$, the $\k$-space $J/J^{2}$ is a graded space of dimension 
$\rg +1$ since $A_{*}$ is a regular algebra of dimension $\rg+1$. Then for some 
homogeneous sequence $\poi p1{,\ldots,}{\rg}{}{}{}$ in $A_{+}$, 
$(t,\poi p1{,\ldots,}{\rg}{}{}{})$ is a basis of $J$ modulo $J^{2}$. Since 
$\poi p1{,\ldots,}{\rg}{}{}{}$ have positive degree, we prove by induction on $d$ that 
$$ A_{*}^{[d]} \subset R_{*}[\poi p1{,\ldots,}{\rg}{}{}{}]^{[d]} + tA_{*}^{[d]} .$$ 
Then by induction on $m$, we get 
$$ A_{*}^{[d]} \subset R_{*}[\poi p1{,\ldots,}{\rg}{}{}{}] + t^{m}A_{*}^{[d]} .$$
So, since the $R_{*}$-module $A_{*}^{[d]}$ is finitely generated,
$$ A_{*}^{[d]} \subset \widehat{R}[\poi p1{,\ldots,}{\rg}{}{}{}]^{[d]}.$$
Apply Lemma~\ref{l2gf1} to $N=A$ and $M=\es S{{\goth g}^{e}}[t]$. 
Since $\widehat{N}=\widehat{R}[\poi p1{,\ldots,}{\rg}{}{}{}]$, for $a \in N$,  
there exists $r \in R$ such that $r(0)\neq 0$ and 
$ra \in R[\poi p1{,\ldots,}{\rg}{}{}{}]$ by Lemma~\ref{l2gf1}. So 
$A_{*}$ is contained  in $R_{*}[\poi p1{,\ldots,}{\rg}{}{}{}]$, 
whence $A_{*}=R_{*}[\poi p1{,\ldots,}{\rg}{}{}{}]$.

Denote by $\poi {\delta }1{,\ldots,}{\rg}{}{}{}$ the respective degrees of 
$\poi p1{,\ldots,}{\rg}{}{}{}$. We can suppose that $\poi p1{,\ldots,}{\rg}{}{}{}$ is 
ordered so that $\poi {\delta }1{\leq \cdots \leq }{\rg}{}{}{}$. Prove by 
induction on $i$ that $\delta _{j}=2d_{j}$ for $j=1,\ldots,i$. By Lemma~\ref{lsg1}(iii),
$2d_{1}$ is the smallest positive degree of the elements of $A$. Moreover, 
$\delta _{1}$ is the smallest positive degree of the elements of 
$R[\poi p1{,\ldots,}{\rg}{}{}{}]$, whence $\delta _{1}=2d_{1}$. Suppose 
$\delta _{j}=2d_{j}$ for $j=1,\ldots,i-1$. Set 
$A_{i} := R[\poi pi{,\ldots,}{\rg}{}{}{}]$. Then, by induction hypothesis and 
Lemma~\ref{lsg1}(iii), 
$$ P_{A_{i},R}(T) = \prod_{j=i}^{\rg} \frac{1}{1-T^{\delta _{j}}} = 
\prod_{j=i}^{\rg} \frac{1}{1-T^{2d_{j}}} .$$
By the first equality, $\delta _{i}$ is the smallest positive degree of the elements of 
$A_{i}$ and by the second equality, 
$2d_{i}$ is the smallest positive degree of the elements of 
$A_{i}$ too, whence $\delta _{i}=2d_{i}$. 
Then with $i=\rg$, we get that 
$\delta _{j}=2d_{j}$ for $j=1,\ldots,\rg$. 
\end{proof}

Recall that $\widehat{R}=\k[[t]]$.

\begin{coro}\label{csg1} 
Suppose that  $A_{*}$ is a polynomial algebra. Then for some standard homogeneous 
generating sequence $\poi q1{,\ldots,}{\rg}{}{}{}$ in $\ai g{}{}$,
$$ A_{*}=R_{*}[t^{- 2 d _{1}}\tau \rond \kappa (q_{1}),\ldots,
t^{- 2 d _{\rg}}\tau \rond \kappa (q_{\rg})] .$$
\end{coro}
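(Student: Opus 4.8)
The plan is to deduce the corollary from Proposition~\ref{psg1}(i), the key being to show that the hypothesis forces $\varepsilon(A)$ to be a polynomial algebra over $\k$. The first step is to identify $\varepsilon(A)$ with the reduction of $A_{*}$ modulo $t$. For this I would check that $\ker(\varepsilon|_{A})=tA$: if $a\in A$ satisfies $\varepsilon(a)=0$, then $a=tb$ with $b\in\es S{\g^{e}}[t]$, and since by Lemma~\ref{lsg1}(ii) one has $A=\es S{\g^{e}}[t]\cap\k[t,t^{-1},a_{1},\ldots,a_{\rg}]$, the element $b=a/t$ lies in $\es S{\g^{e}}[t]$ and in $\k[t,t^{-1},a_{1},\ldots,a_{\rg}]=A[t^{-1}]$, hence in $A$; the reverse inclusion $tA\subset\ker\varepsilon$ is clear. (Equivalently, $t$ is a nonzerodivisor on $A$ by Lemma~\ref{lgf2}(i), and $A$ is saturated in $\es S{\g^{e}}[t]$ with respect to $t$.) Extending $\varepsilon$ to $A_{*}$ by evaluation at $t=0$, the same computation gives $\ker(\varepsilon|_{A_{*}})=tA_{*}$, while $\varepsilon(A_{*})=\varepsilon(A)$ since inverting elements of $R$ nonzero at $t=0$ does not enlarge the image; using $R_{*}/tR_{*}=\k$ one concludes $A_{*}/tA_{*}\cong\varepsilon(A)$.

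Next I would check that $\varepsilon(A)$ is polynomial. By hypothesis $A_{*}$ is a polynomial algebra over $R_{*}$, so Proposition~\ref{psg1}(ii) yields a homogeneous sequence $p_{1},\ldots,p_{\rg}$ in $A_{+}$ with $A_{*}=R_{*}[p_{1},\ldots,p_{\rg}]$ and $t,p_{1},\ldots,p_{\rg}$ algebraically independent over $\k$. Reducing modulo $t$ and using the identification of the first step, $\varepsilon(A)=A_{*}/tA_{*}=\k[\overline{p}_{1},\ldots,\overline{p}_{\rg}]$ is a polynomial algebra over $\k$ in $\rg$ variables.

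Finally I would invoke Proposition~\ref{psg1}(i). Since $\varepsilon(A)$ is polynomial, that proposition furnishes a standard homogeneous generating sequence $q_{1},\ldots,q_{\rg}$ of $\ai g{}{}$ for which $\ie{q_{1}},\ldots,\ie{q_{\rg}}$ are algebraically independent over $\k$; moreover the construction carried out in its proof shows that, setting $Q_{i}:=t^{-2d_{i}}\tau\rond\kappa(q_{i})$, one has $A=R[Q_{1},\ldots,Q_{\rg}]$. Tensoring this equality with $R_{*}$ over $R$ gives $A_{*}=R_{*}[Q_{1},\ldots,Q_{\rg}]$, which is precisely the asserted identity.

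The only genuinely delicate point is the first step: the identification $\varepsilon(A)\cong A_{*}/tA_{*}$, together with the fact that the reduction modulo $t$ of the polynomial algebra $A_{*}$ is again polynomial of the expected dimension. This rests on the primality (equivalently, saturation) of $t$ in $A$, and it is here that the explicit description of $A$ in Lemma~\ref{lsg1}(ii) is indispensable. Once this is in place, the corollary is a formal consequence of Proposition~\ref{psg1}.
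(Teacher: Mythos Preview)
Your proof is correct, and it takes a genuinely different route from the paper's.

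The paper argues directly: starting from Proposition~\ref{psg1}(ii), which gives $A_{*}=R_{*}[p_{1},\ldots,p_{\rg}]$ with $\deg p_{i}=2d_{i}$, it explicitly expands each $t^{m_{i}}p_{i}$ as a finite sum $\sum_{j}t^{j}\tau\rond\kappa(q_{i,j})$ with $q_{i,j}\in\ai g{}{}_{d_{i}}$, extracts a single term $Q_{i}:=t^{-2d_{i}}\tau\rond\kappa(q_{i,j_{i}})$ with $p_{i}-Q_{i}\in tA$, and then runs a Nakayama/completion argument (passing to $\widehat{A}$ and coming back via Lemma~\ref{l2gf1}) to obtain $A_{*}=R_{*}[Q_{1},\ldots,Q_{\rg}]$. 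A degree count shows that the $q_{i,j_{i}}$ generate $\ai g{}{}$.

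You instead deduce from $A_{*}/tA_{*}\cong\varepsilon(A)$ that $\varepsilon(A)$ is a polynomial $\k$-algebra, and then invoke Proposition~\ref{psg1}(i): its ``only if'' direction produces a generating sequence $q_{1},\ldots,q_{\rg}$ with $\poi{\ie q}1{,\ldots,}{\rg}{}{}{}$ algebraically independent, and its ``if'' direction (via the computation in its proof) yields $A=R[Q_{1},\ldots,Q_{\rg}]$, hence $A_{*}=R_{*}[Q_{1},\ldots,Q_{\rg}]$. This is shorter and more conceptual, at the price of relying on the \emph{proof} of Proposition~\ref{psg1}(i) rather than its statement (which only asserts that $A$ is polynomial, without naming the generators). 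As a small bonus, your route actually gives the sharper conclusion $A=R[Q_{1},\ldots,Q_{\rg}]$ over $R=\k[t]$, not merely over $R_{*}$.
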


\begin{proof}
For $m$ nonnegative integer, denote by $\ai g{}{}_{m}$ the space of standard degree $m$
of $\ai g{}{}$. By Proposition~\ref{psg1}(ii), for some homogeneous sequence 
$\poi p1{,\ldots,}{\rg}{}{}{}$ in $A_{+}$ such that $\poi p1{,\ldots,}{\rg}{}{}{}$ have 
degree $\poi {2d}1{,\ldots,}{\rg}{}{}{}$ respectively, 
$$ A_{*} = R_{*}[\poi p1{,\ldots,}{\rg}{}{}{}] .$$
For $i=1,\ldots,\rg$, let $m_{i}$ be the smallest integer such that $t^{m_{i}}p_{i}$
is in $\tau \rond \kappa (\ai g{}{})$. By Lemma~\ref{lsg1}(i), $t^{m_{i}}p_{i}$ has an 
expansion
$$ t^{m_{i}}p_{i} = \sum_{j\in {\Bbb N}} t^{j}\tau \rond \kappa (q_{i,j})$$
with $q_{i,j}$, $j\in {\Bbb N}$, in $\ai g{}{}_{d_{i}}$ of finite support. 
Denoting by $\delta _{i,j}$ the standard degree of $\ie q_{i,j}$, set:
$$
J'_{i} := \{j \in {\Bbb N} \; ; \; m_{i} = j+\delta _{i,j}\}, \qquad 
\delta _{i} := \inf \{ \delta _{i,j} \; ; \; j\in J'_{i}\},$$ 
$$j_{i} := m_{i}- 2 d_i , \qquad  
Q_{i} := t^{-2 d _{i}} \tau \rond \kappa (q_{i,j_{i}}).$$
For $i=1,\ldots,\rg$, since $p_{i}$ is not divisible by $t$ in $A$, 
$$ p_{i} - Q_{i} \in tA ,$$
whence
$$ A_{*} \subset R_{*}[\poi Q1{,\ldots,}{\rg}{}{}{}] + tA_{*} .$$
Then, by induction $m$, 
$$ A_{*} \subset R_{*}[\poi Q1{,\ldots,}{m}{}{}{}] + t^{m}A_{*} $$
for all $m$. As a result, 
$$ \widehat{A} = \widehat{R}[\poi Q1{,\ldots,}{\rg}{}{}{}] ,$$
since for all $d$, the $R_{*}$-module $A_{*}^{[d]}$ is finitely generated. Then, 
by Lemma~\ref{l2gf1}, 
$$ A_{*} = R_{*}[\poi Q1{,\ldots,}{\rg}{}{}{}] .$$
As a result, since $A$ has dimension $\rg +1$, 
the elements $t,\poi Q1{,\ldots,}{\rg}{}{}{}$ are 
algebraically independent over $\k$ and so are 
$\poi q{1,j_{1}}{,\ldots,}{\rg,j_{\rg}}{}{}{}$. Moreover
the algebra $\ai g{}{}$ is generated by $\poi q{1,j_{1}}{,\ldots,}{\rg,j_{\rg}}{}{}{}$
since they have degree $\poi d1{,\ldots,}{\rg}{}{}{}$ respectively.
\end{proof}

\subsection{} \label{sg2}
Denote by ${\cal V}$ the nullvariety of $A_{+}$ in ${\goth g}^{f}\times \k$. Let 
${\cal V}_{*}$ be the union of the irreducible components of ${\cal V}$ which are not
contained in ${\goth g}^{f}\times \{0\}$. The following result is proven in 
\cite[Corollary 4.4(i)]{CM1}. Indeed, the proof 
of this result does not use the assumption of 
\cite[Section 4]{CM1} that for some homogeneous generators $q_1,\ldots,q_\ell$  of 
${\rm S}(\g)^\g$, the elements $\ie{q_1},\ldots,\ie{q_\ell}$ are algebraically 
independent. 

\begin{lemma}[{\cite[Corollary 4.4(i)]{CM1}}]\label{lsg2}
\begin{enumerate}
\item[{\rm (i)}] The variety ${\cal V}_{*}$ is equidimensional of dimension $r+1-\rg$.
\item[{\rm (ii)}] 
For all irreducible component $X$ of ${\cal V}_{*}$ and for all $z$ in 
$\k$, $X$ is not contained in ${\goth g}^{f}\times \{z\}$.
\end{enumerate}
\end{lemma}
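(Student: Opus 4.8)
The plan is to study the projection $\varpi\colon {\cal V}\to\k$, $(y,z)\mapsto z$, by identifying the fibres over $\k^{*}:=\k\setminus\{0\}$ with a single geometric object---the intersection of the Slodowy slice and the nilpotent cone---and then to transfer the resulting dimension information to ${\cal V}_{*}$.

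\emph{Step 1 (description over $\k^{*}$).} First I would fix homogeneous generators $\poi q1{,\ldots,}{\rg}{}{}{}$ of $\ai g{}{}$ and set $a_{i}:=\tau\rond\kappa(q_{i})$ as in Lemma~\ref{lsg1}(ii), so that $A[t^{-1}]=\k[t,t^{-1},\poi a1{,\ldots,}{\rg}{}{}{}]$. Since $t$ has degree $0$ and the $a_{i}$ generate $A[t^{-1}]$ over $\k[t,t^{-1}]$, the augmentation ideal $A_{+}$ generates in $A[t^{-1}]$ exactly the ideal $(\poi a1{,\ldots,}{\rg}{}{}{})$; hence over $\g^{f}\times\k^{*}$ the variety ${\cal V}$ is the common zero locus of $\poi a1{,\ldots,}{\rg}{}{}{}$. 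A direct computation from $\tau(x_{i})=tx_{i}$ shows that, for all $y$ in $\g^{f}$ and $z$ in $\k$,
$$ a_{i}(y,z)=\kappa(q_{i})(zy)=q_{i}(e+zy).$$
Therefore the isomorphism $\Psi\colon \g^{f}\times\k^{*}\to{\cal S}_e\times\k^{*}$, $(y,z)\mapsto(e+zy,z)$, carries ${\cal V}\cap(\g^{f}\times\k^{*})$ onto $({\cal S}_e\cap\mathscr{N})\times\k^{*}$, where $\mathscr{N}$ denotes the nilpotent cone of $\g$, that is, the common zero locus of $\poi q1{,\ldots,}{\rg}{}{}{}$ in $\g$.

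\emph{Step 2 (flatness on the slice).} Next I would invoke the flatness of the adjoint quotient restricted to the slice: the morphism $\chi\colon {\cal S}_e\to\k^{\rg}$, $w\mapsto(q_{1}(w),\ldots,q_{\rg}(w))$, is flat, and since ${\cal S}_e\cong\g^{f}$ is smooth and irreducible of dimension $r$, every nonempty fibre of $\chi$ is equidimensional of dimension $r-\rg$. As $e$ is nilpotent, $\chi(e)=0$, so ${\cal S}_e\cap\mathscr{N}=\chi^{-1}(0)$ is equidimensional of dimension $r-\rg$. Combined with Step~1, the variety ${\cal V}\cap(\g^{f}\times\k^{*})\cong({\cal S}_e\cap\mathscr{N})\times\k^{*}$ is equidimensional of dimension $r+1-\rg$; moreover, under $\Psi$ each of its irreducible components is of the form (a component of ${\cal S}_e\cap\mathscr{N}$)$\,\times\k^{*}$, and hence projects \emph{onto} $\k^{*}$ through $\varpi$.

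\emph{Step 3 (transfer to ${\cal V}_{*}$).} Finally I would pass to ${\cal V}_{*}$. A component $X$ of ${\cal V}$ not contained in $\g^{f}\times\{0\}$ meets $\g^{f}\times\k^{*}$ in a nonempty open dense subset of $X$; thus the irreducible components of ${\cal V}_{*}$ are exactly the closures in $\g^{f}\times\k$ of the irreducible components of ${\cal V}\cap(\g^{f}\times\k^{*})$. Since taking closures preserves dimension, every component of ${\cal V}_{*}$ has dimension $r+1-\rg$, which is assertion~(i). For assertion~(ii), the case $z=0$ is the very definition of ${\cal V}_{*}$, while for $z\neq 0$ it follows from Step~2: each component of ${\cal V}_{*}$ projects dominantly onto $\k$ through $\varpi$, and so cannot be contained in any fibre $\g^{f}\times\{z\}$. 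The one substantial input is the flatness theorem of Step~2, i.e.\ the equidimensionality and the exact value $r-\rg$ of $\dim({\cal S}_e\cap\mathscr{N})$---the classical transversality/flatness property of the Slodowy slice; everything else is bookkeeping with the $\N$-grading and the localization at $t$, the only delicate point being the component correspondence under closure, which is what prevents a component of ${\cal V}_{*}$ from being concealed inside a single nonzero fibre.
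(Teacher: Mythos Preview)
Your argument is correct. The paper itself does not give a proof of this lemma; it simply cites \cite[Corollary~4.4(i)]{CM1} and remarks that the proof there does not use the extra hypothesis assumed in that section of \cite{CM1}. Your route---identifying ${\cal V}\cap(\g^{f}\times\k^{*})$ with $({\cal S}_e\cap\mathscr{N})\times\k^{*}$ via the scaling map $(y,z)\mapsto(e+zy,z)$ and then invoking the equidimensionality of ${\cal S}_e\cap\mathscr{N}$ (dimension $r-\rg$) coming from the flatness of the adjoint quotient restricted to the slice---is exactly the expected argument and is essentially what underlies the cited result in \cite{CM1}. The bookkeeping in Step~3 (bijection between components of ${\cal V}_{*}$ and closures of components of the open part) is handled correctly.
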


Let ${\cal N}$ be the nullvariety of $\varepsilon (A)_{+}$ in ${\goth g}^{f}$. Then 
${\cal V}$ is the union of ${\cal V}_{*}$ and ${\cal N}\times \{0\}$.

\begin{lemma}\label{l2sg2}
\begin{enumerate}
\item[{\rm (i)}] All irreducible component of ${\cal N}$ have dimension at least $r-\rg$ 
and all irreducible component of ${\cal V}$ have dimension at least $r+1-\rg$.
\item[{\rm (ii)}]  Assume that ${\cal N}$ has dimension $r-\rg$. Then for some 
homogeneous sequence $\poi p1{,\ldots,}{r-\rg}{}{}{}$ in $\es S{{\goth g}^{e}}_{+}$, the 
nullvariety of $t,\poi p1{,\ldots,}{r-\rg}{}{}{}$ in ${\cal V}$ is equal to $\{0\}$.
\end{enumerate}
\end{lemma}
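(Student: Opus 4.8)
The plan is to derive both parts from a count of defining equations, using the Hilbert series of Lemma~\ref{lsg1}, graded prime avoidance (Lemma~\ref{lgf1}) and Krull's height theorem. Throughout I identify ${\goth g}^{f}$ with $\k^{r}$ and ${\goth g}^{f}\times\k$ with $\k^{r+1}$, and I write $\mathcal{Z}(\cdots)$ for the common nullvariety in ${\goth g}^{f}$ of a family of homogeneous functions.

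For part (i) I would first produce short systems of equations cutting out ${\cal N}$ and ${\cal V}$. By Lemma~\ref{lsg1}(iii)--(iv) one has $\dim A=\rg+1$ and $\dim\varepsilon (A)=\rg$. Moreover $A$ is a finitely generated graded domain, and since $t$ is prime in $A$ (Lemma~\ref{lgf2}(i)) the evaluation $\varepsilon$ identifies $\varepsilon (A)$ with $A/tA$, again a finitely generated graded domain. Applying Lemma~\ref{lgf1}(ii) to $\varepsilon (A)$ over the base ring $\k$ yields a homogeneous sequence $\poi p1{,\ldots,}{\rg}{}{}{}$ in $\varepsilon (A)_{+}\subset \es S{{\goth g}^{e}}_{+}$ whose radical is $\varepsilon (A)_{+}$; hence ${\cal N}=\mathcal{Z}(\poi p1{,\ldots,}{\rg}{}{}{})$ is cut out in ${\goth g}^{f}$ by $\rg$ functions, and Krull's height theorem \cite[Ch. 5, Theorem 13.5]{Mat} gives that every component of ${\cal N}$ has codimension at most $\rg$, i.e. dimension at least $r-\rg$. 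Running the same argument for $A$ over the base ring $\k[t]$, so that Lemma~\ref{lgf1}(ii) produces $\rg$ homogeneous elements of $A_{+}$, exhibits ${\cal V}$ as the nullvariety in ${\goth g}^{f}\times\k$ of $\rg$ functions, whence every component of ${\cal V}$ has dimension at least $r+1-\rg$.

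For part (ii) the key observation is that the nullvariety of $t$ in ${\cal V}$ is exactly ${\cal N}\times\{0\}$: for $(y,0)$ in ${\goth g}^{f}\times\{0\}$ and $a$ in $A_{+}$ one has $a(y,0)=\varepsilon (a)(y)$, so $(y,0)\in{\cal V}$ if and only if $y\in{\cal N}$. By hypothesis this set has dimension $r-\rg$, and it is a cone, being the nullvariety of the homogeneous ideal $\varepsilon (A)_{+}$, all of whose elements have positive Slodowy degree. It therefore suffices to find $r-\rg$ homogeneous elements of $\es S{{\goth g}^{e}}_{+}$ whose common zeros meet ${\cal N}$ only at the origin, for then the nullvariety of $t,\poi p1{,\ldots,}{r-\rg}{}{}{}$ in ${\cal V}$ is $({\cal N}\cap\mathcal{Z}(\poi p1{,\ldots,}{r-\rg}{}{}{}))\times\{0\}=\{0\}$. (This is compatible with Lemma~\ref{lsg2}: since no component of ${\cal V}_{*}$ lies in a fibre $\{t=z\}$, the slice ${\cal V}_{*}\cap\{t=0\}$ has dimension at most $r-\rg$ and contributes no larger component to ${\cal N}\times\{0\}$.) I would build the $p_{i}$ one at a time by graded prime avoidance inside the domain $\es S{{\goth g}^{e}}$. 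Suppose $\poi p1{,\ldots,}{i-1}{}{}{}$ have been chosen with ${\cal N}_{i-1}:={\cal N}\cap\mathcal{Z}(\poi p1{,\ldots,}{i-1}{}{}{})$ a cone of dimension at most $(r-\rg)-(i-1)$. If $\dim{\cal N}_{i-1}\geq 1$, let $\poi {{\goth p}}1{,\ldots,}{m}{}{}{}$ be the graded prime ideals of the positive-dimensional irreducible components of ${\cal N}_{i-1}$; these are pairwise incomparable, hence are precisely the minimal primes over their intersection, and each is strictly contained in $\es S{{\goth g}^{e}}_{+}$. By Lemma~\ref{lgf1}(i) there is a homogeneous $p_{i}\in\es S{{\goth g}^{e}}_{+}$ avoiding all of them; it vanishes identically on no positive-dimensional component, so $\dim{\cal N}_{i}\leq\dim{\cal N}_{i-1}-1$ while ${\cal N}_{i}$ stays a cone. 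After $r-\rg$ steps ${\cal N}_{r-\rg}$ is a cone of dimension $0$, hence equals $\{0\}$, which finishes the argument.

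The only genuinely non-formal point is the reduction in part (ii): recognizing that the fibre ${\cal V}\cap\{t=0\}$ is exactly ${\cal N}\times\{0\}$, and that a graded zero-dimensional cone must be the origin. The main technical care needed elsewhere is simply to make sure the finite-generation and domain properties required to invoke Lemma~\ref{lgf1} are available, which they are thanks to Lemma~\ref{lgf2}(i) and the Hilbert series of Lemma~\ref{lsg1}. Once these are in place, both the equation counts of part (i) and the step-by-step cutting of part (ii) are routine graded Noether-normalization arguments resting on the prime avoidance of Lemma~\ref{lgf1}(i) and Krull's height theorem.
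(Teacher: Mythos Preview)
Your argument for part (ii) is correct and essentially identical to the paper's: both build the sequence $\poi p1{,\ldots,}{r-\rg}{}{}{}$ by graded prime avoidance inside $\es S{{\goth g}^{e}}$, using that $\mathcal{V}\cap\{t=0\}=\mathcal{N}\times\{0\}$ and that a graded zero-dimensional cone reduces to $\{0\}$.

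For part (i) there is a genuine gap. You apply Lemma~\ref{lgf1}(ii) to $A$ and to $\varepsilon(A)$, and you assert that these are finitely generated graded domains with $\dim A=\rg+1$ and $\dim\varepsilon(A)=\rg$. The domain property is fine (Lemma~\ref{lgf2}(i) does not use finite generation), but neither Lemma~\ref{lgf2}(i) nor the Hilbert series of Lemma~\ref{lsg1} yields finite generation. In fact, the finite generation of $A$ is only established later, in the proof of Theorem~\ref{ti6}, and under the extra hypothesis that $\mathcal{N}$ has dimension $r-\rg$; so you cannot invoke it here for the unconditional statement (i). Without Noetherianity of $A$ (resp.\ $\varepsilon(A)$) you cannot speak of finitely many minimal primes, and without finite generation the Hilbert series does not compute the Krull dimension, so Lemma~\ref{lgf1}(ii) is unavailable.

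The paper circumvents this by passing to auxiliary finitely generated subalgebras. Since $\es S{{\goth g}^{e}}[t]$ is Noetherian, the ideal $\es S{{\goth g}^{e}}[t]A_{+}$ is generated by finitely many homogeneous $\poi b1{,\ldots,}{m}{}{}{}\in A_{+}$; setting $B:=\k[\poi a1{,\ldots,}{\rg}{}{}{},\poi b1{,\ldots,}{m}{}{}{}]$ and $C:=B[t]$, one has $\dim C=\rg+1$, and $\mathcal{N}\times\{0\}$ (resp.\ $\mathcal{V}$) is a fibre of the map ${\goth g}^{f}\times\k\to\mathrm{Specm}(C)$ (resp.\ the preimage of the maximal ideal $B_{+}$ under ${\goth g}^{f}\times\k\to\mathrm{Specm}(B)$). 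The dimension bounds then follow from standard fibre-dimension inequalities. Your approach to (i) becomes correct if you make exactly this replacement: apply Lemma~\ref{lgf1}(ii) not to $A$ itself but to a finitely generated subalgebra $B\subset A$ with $\es S{{\goth g}^{e}}[t]B_{+}=\es S{{\goth g}^{e}}[t]A_{+}$ and $\dim B[t]=\rg+1$.
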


\begin{proof}
(i) By Lemma~\ref{lsg1}(ii), for some homogeneous sequence 
$\poi a1{,\ldots,}{\rg}{}{}{}$ in $A_{+}$, the elements $t,\poi a1{,\ldots,}{\rg}{}{}{}$ 
are algebraically independent over $\k$. Let $\poi b1{,\ldots,}{m}{}{}{}$ be a 
homogeneous sequence in $A_{+}$, generating the ideal $\es S{{\goth g}^{e}}[t]A_{+}$ of 
$\es S{{\goth g}^{e}}[t]$. Set:
$$ B := \k[\poi a1{,\ldots,}{\rg }{}{}{},\poi b1{,\ldots,}{m}{}{}{}], 
\qquad B_{+} := \poi {Ba}1{+\cdots +}{\rg }{}{}{}+\poi {Bb}1{+\cdots +}{m}{}{}{},$$
$$C := B[t], \qquad C_{++} := B_{+}[t] + Ct .$$
Then $B$ and $C$ are graded subalgebras of $A$ and $B_{+}$ and $C_{++}$ are
maximal ideals of $B$ and $C$ respectively. Moreover, $C$ has dimension $\rg +1$.
We have a commutative diagram
$$\xymatrix{ & {\goth g}^{f}\times \k \ar[ld]_{\alpha } \ar[rd]^{\beta } & \\
{\mathrm {Specm}}(C) \ar[rr]^{\pi } & & {\mathrm {Specm}}(B) }$$
with $\alpha $, $\beta $, $\pi $ the morphisms whose comorphisms are the 
canonical injections
$$ C\hookrightarrow \es S{{\goth g}^{e}}[t], \quad B \hookrightarrow 
\es S{{\goth g}^{e}}[t], \quad B \hookrightarrow C$$
respectively. Since $C$ has dimension $\rg +1$, 
the irreducible components of the fibers 
of $\alpha $ have dimension at least $r-\rg$, 
whence the result for ${\cal N}$ since 
${\cal N}\times \{0\}=\alpha ^{-1}(C_{++})$. Moreover, 
${\cal V}=\beta ^{-1}(B_{+})$ and $\pi ^{-1}(B_{+})$ is a subvariety of dimension 
$1$ of ${\mathrm {Specm}}(C)$. Hence all irreducible component of ${\cal V}$ has 
dimension at least $r+1-\rg$.

(ii) Prove by induction on $i$ that there exists a homogeneous sequence 
$\poi p1{,\ldots,}{i}{}{}{}$ in $\es S{{\goth g}^{e}}_{+}$ such that the minimal prime 
ideals of $\es S{{\goth g}^{e}}$ containing $\varepsilon (A)_{+}$ and 
$\poi p1{,\ldots,}{i}{}{}{}$ have height $\rg +i$. First of all, 
$\es S{{\goth g}^{e}}\varepsilon (A)_{+}$ is graded. 
Then the minimal prime ideals of $\es S{{\goth g}^{e}}$ containing $\varepsilon (A)_{+}$ 
are graded too. By, (i), they have height $\rg$ since 
${\cal N}$ has dimension $r-\rg$ by hypothesis. In particular, they are strictly 
contained in $\es S{{\goth g}^{e}}_{+}$. Hence, by Lemma~\ref{lgf1}(ii), for some 
homogeneous element $p_{1}$ in $\es S{{\goth g}^{e}}$, $p_{1}$ is not in the union of 
these ideals so that the statement is true for $i=1$ by~\cite[Ch.~5, Theorem~13.5]{Mat}. 
Suppose that it is true for $i-1$. Then the minimal prime ideals containing 
$\varepsilon (A)_{+}$ and $\poi p1{,\ldots,}{i-1}{}{}{}$ are graded and strictly 
contained in $\es S{{\goth g}^{e}}_{+}$ by the induction hypothesis. So, by 
Lemma~\ref{lgf1}(ii), for some homogeneous element $p_{i}$ in $\es S{{\goth g}^{e}}$, 
$p_{i}$ is not in the union of these ideals and the sequence $\poi p1{,\ldots,}{i}{}{}{}$
satisfy the condition of the statement by~\cite[Ch. 5, Theorem 13.5]{Mat}. For $i=r-\rg$,
the nullvariety of $\poi p1{,\ldots,}{r-\rg}{}{}{}$ in ${\cal N}$ has dimension 
$0$. Then it is equal to $\{0\}$ as the nullvariety of a graded ideal, whence the
assertion since ${\cal N}\times \{0\}$ is the nullvariety of $t$ in ${\cal V}$.
\end{proof}

\subsection{} \label{sg3}  
We assume in this subsection that ${\cal N}$ has dimension $r-\ell$. 
Let $\poi p1{,\ldots,}{r-\rg}{}{}{}$ be as in Lemma~\ref{l2sg2}(ii), and set 
$$ C := A[\poi p1{,\ldots,}{r-\rg}{}{}{}] .$$
Then $\poi p1{,\ldots,}{r-\rg}{}{}{}$ are algebraically independent over $A$ since 
${\cal N}$ has dimension $r-\ell$. 

\begin{lemma}\label{lsg3}
The ideal $\es S{{\goth g}^{e}}[t]_{+}$ of $\es S{{\goth g}^{e}}[t]$  is the radical of 
$\es S{{\goth g}^{e}}[t]C_{+}$.
\end{lemma}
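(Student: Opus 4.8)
The plan is to prove the equivalent geometric statement that the nullvariety ${\cal W}:=\mathcal V(\es S{{\goth g}^{e}}[t]C_{+})$ in ${\goth g}^{f}\times\k$ equals $\{0\}\times\k$. Since $C_{+}$ is generated by homogeneous elements of positive Slodowy degree, the inclusion $\es S{{\goth g}^{e}}[t]C_{+}\subseteq\es S{{\goth g}^{e}}[t]_{+}$ is clear, and as $\es S{{\goth g}^{e}}[t]/\es S{{\goth g}^{e}}[t]_{+}\simeq\k[t]$ is a domain, $\es S{{\goth g}^{e}}[t]_{+}$ is a prime, hence radical, ideal whose nullvariety is exactly $\{0\}\times\k$. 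So by the Nullstellensatz it suffices to show ${\cal W}=\{0\}\times\k$. Now ${\cal W}={\cal V}\cap\{p_{1}=\cdots=p_{r-\rg}=0\}$, it contains $\{0\}\times\k$ (all generators of $C_{+}$ vanish there), and it is stable under the one-parameter subgroup of $\gm$ defining the Slodowy grading, which fixes $t$ and contracts ${\goth g}^{f}$ to $0$; thus each fibre ${\cal W}\cap({\goth g}^{f}\times\{z\})$ is a Slodowy cone. By Lemma~\ref{l2sg2}(ii), ${\cal W}\cap\{t=0\}=\{0\}$.

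The heart of the argument is to control the fibres over $z\neq 0$ by a flatness argument. By Lemma~\ref{lsg2}, ${\cal V}_{*}$ is equidimensional of dimension $r+1-\rg$ and has no irreducible component contained in any slice ${\goth g}^{f}\times\{z\}$; equivalently, the coordinate ring of ${\cal V}_{*}$ is $t$-torsion free, so ${\cal V}_{*}$ is flat over $\k[t]$ with all fibres of dimension $r-\rg$. I would then cut by $p_{1},\dots,p_{r-\rg}$ one at a time, using the local criterion for flatness: if $p_{i}$ is a nonzerodivisor on the special fibre of the (inductively flat) family obtained after cutting by $p_{1},\dots,p_{i-1}$, then $p_{i}$ is a nonzerodivisor on the total space and the quotient is again flat over $\k[t]$, i.e. $\{p_{i}=0\}$ is a relative Cartier divisor. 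The required nonzerodivisor property holds because $p_{i}$ was chosen in Lemma~\ref{l2sg2}(ii) to avoid the minimal primes of $\es S{{\goth g}^{e}}\varepsilon(A)_{+}+(p_{1},\dots,p_{i-1})$, i.e. to be a nonzerodivisor on ${\cal N}\cap\{p_{1}=\cdots=p_{i-1}=0\}$; since, by flatness and equidimensionality, the special fibre of ${\cal V}_{*}$ is the union of the top dimensional components of ${\cal N}$, its defining primes (after cutting) are among those of this ${\cal N}$-cut. Iterating, ${\cal V}_{*}\cap\{p_{1}=\cdots=p_{r-\rg}=0\}$ is flat over $\k[t]$ and its special fibre, contained in ${\cal N}\cap\{p_{1}=\cdots=p_{r-\rg}=0\}=\{0\}$, has dimension $0$; by flatness every fibre then has dimension $0$.

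Finally I would combine the two structures. Over $z\neq 0$ one has ${\cal V}={\cal V}_{*}$, so each fibre of ${\cal W}$ over $z\neq 0$ coincides with the corresponding fibre of ${\cal V}_{*}\cap\{p_{j}=0\}$, hence is $0$-dimensional, and it is a Slodowy cone; a finite Slodowy cone is $\{0\}$, because a nonzero vector has a one-dimensional orbit under the contracting $\gm$-action. Thus ${\cal W}\cap\{t\neq0\}=\{0\}\times\k^{*}$, and together with ${\cal W}\cap\{t=0\}=\{0\}$ this gives ${\cal W}=\{0\}\times\k$, as desired. The main obstacle is the flatness step: one must ensure that cutting the flat family ${\cal V}_{*}$ by the $p_{i}$ preserves flatness over $\k[t]$, which amounts to checking that each $p_{i}$ is a nonzerodivisor on the (possibly non-reduced) special fibre. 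This is exactly where the full strength of Lemma~\ref{lsg2} (no component of ${\cal V}_{*}$ lies in any slice, giving $t$-torsion freeness) and the specific choice of the $p_{i}$ in Lemma~\ref{l2sg2}(ii) enter; without the ``for all $z$'' clause of Lemma~\ref{lsg2} one could only exclude finitely many bad slices, which would not suffice.
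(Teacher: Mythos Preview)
Your reduction to showing $\mathcal W=\{0\}\times\k$ and the observation that each fibre $\mathcal W_z$ is a Slodowy cone are correct, and the case $z=0$ is indeed Lemma~\ref{l2sg2}(ii). But the flatness machinery you invoke for $z\neq 0$ is both unnecessary and not fully justified, and the paper's argument is much shorter.

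The paper does not work fibre by fibre; it works component by component. Let $Y$ be an irreducible component of $\mathcal W$. Since every component of $\mathcal V$ has dimension at least $r+1-\ell$ (Lemma~\ref{l2sg2}(i)) and $\mathcal W$ is obtained from $\mathcal V$ by cutting with the $r-\ell$ hypersurfaces $p_i=0$, Krull's height theorem gives $\dim Y\geq 1$. By the choice of the $p_i$, $Y\cap\{t=0\}\subseteq\{0\}$, so $\dim Y\leq 1$; hence $\dim Y=1$. Now $Y$ is invariant under the Slodowy $\gm$-action, which fixes $t$, so a one-dimensional irreducible $\gm$-stable $Y$ is either contained in the fixed locus $\{0\}\times\k$ (hence equal to it) or is the closure of a single orbit $\overline{\gm\cdot(x,z)}$ with $x\neq 0$; in the second case $Y\subset\mathfrak g^f\times\{z\}$, and since $0$ is the nullvariety of $t$ in $Y$ one must have $z=0$, whence $x\in\mathcal N\cap V(p_1,\ldots,p_{r-\ell})=\{0\}$, a contradiction. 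Thus $Y=\{0\}\times\k$. No flatness, no induction on the $p_i$.

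Your flatness route, by contrast, has real gaps. First, the criterion ``$p_i$ a nonzerodivisor on the special fibre $\Rightarrow$ quotient flat'' is a statement over a \emph{local} base: it yields flatness over $\k[t]_{(t)}$, not over $\k[t]$, so it says nothing directly about an arbitrary fibre $z\neq 0$. Second, the nonzerodivisor condition must be verified on the \emph{scheme-theoretic} special fibre of $\mathcal V_*\cap V(p_1,\ldots,p_{i-1})$, which may be non-reduced; Lemma~\ref{l2sg2}(ii) only guarantees that $p_i$ avoids the minimal primes of the \emph{reduced} variety $\mathcal N\cap V(p_1,\ldots,p_{i-1})$, and you have not argued that the special fibre of $\mathcal V_*$ has no embedded primes, nor that it coincides with $\mathcal N$. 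Third, even granting flatness, ``special fibre of dimension $0$ $\Rightarrow$ every fibre of dimension $0$'' is not automatic without knowing, say, that the total space is equidimensional; upper semicontinuity of fibre dimension goes the wrong way for your purpose. In the end you would still need the $\gm$-invariance and the global dimension bound that the paper exploits directly, so the flatness detour buys nothing.
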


\begin{proof} 
Let $Y$ be an irreducible component of the nullvariety of $C_{+}$ in 
${\goth g}^{f}\times \k$. Then $Y$ has dimension at least $1$. By definition the 
nullvariety of $t$ in $Y$ is equal to $\{0\}$. Hence $Y$ has dimension $1$. The grading
on $\es S{{\goth g}^{e}}[t]$ induces an action of 
the one-dimensional multiplicative 
group $\gm$ on ${\goth g}^{f}\times \k$ such 
that for all $(x,z)$ in ${\goth g}^{f}\times \k$, $(0,z)$ is in the closure of the orbit 
of $(x,z)$ under $\gm$. Since $C_{+}$ is graded, $Y$ is invariant under $\gm$. As a 
result, $Y=\{0\}\times \k$ or for some $x$ in ${\goth g}^{f}\times \k$, $Y$ is the 
closure of the orbit of $(x,0)$ under $\gm$ since $0$ is the nullvariety of $t$ in $Y$. 
In the last case, $x$ is a zero of $\poi p1{,\ldots,}{r-\rg}{}{}{}$ in ${\cal N}$, that 
is $x=0$. Hence $Y=\{0\}\times \k$. As a result, the nullvariety of $C_{+}$ in 
${\goth g}^{f}\times \k$ is equal to $\{0\}\times \k$ that is the nullvariety of 
$\es S{{\goth g}^{e}}[t]_{+}$, whence the assertion since $\es S{{\goth g}^{e}}[t]_{+}$ 
is a prime ideal of $\es S{{\goth g}^{e}}[t]$.
\end{proof}

For ${\goth p}$ a prime ideal of $A$, denote by $A_{{\goth p}}$ the localization of 
$A$ at ${\goth p}$ and by $\overline{{\goth p}}$ the ideal of $C$ generated by 
${\goth p}$. Since $C$ is a polynomial algebra over $A$, $\overline{{\goth p}}$ 
is a prime ideal of $C$ and $A\setminus {\goth p}$ is the intersection of 
$A$ and $C\setminus \overline{{\goth p}}$. Hence the localization 
$C_{\overline{{\goth p}}}$ of $C$ at $\overline{{\goth p}}$ is a localization of the 
polynomial algebra $A_{{\goth p}}[\poi p1{,\ldots,}{r-\rg}{}{}{}]$. Moreover, 
$A_{{\goth p}}$ is the quotient of $C_{\overline{{\goth p}}}$ by the ideal generated 
by $\poi p1{,\ldots,}{r-\rg}{}{}{}$. According to \cite[Ch.~6, Theorem~17.4]{Mat}, if 
$C_{\overline{{\goth p}}}$ is Cohen-Macaulay, $\poi p1{,\ldots,}{r-\rg}{}{}{}$ is a 
regular sequence in $C_{\overline{{\goth p}}}$ since $A_{{\goth p}}$ has dimension
$\dim C_{\overline{{\goth p}}}-r+\rg$. Then, again by \cite[Ch.~6, Theorem~17.4]{Mat},
$A_{{\goth p}}$ is Cohen-Macaulay if so is $C_{\overline{{\goth p}}}$.  

\begin{proof}[Proof of Theorem \ref{ti6}]
By Lemma~\ref{lsg3} and Proposition~\ref{pgf1}, the algebra $C$ is finitely generated. 
Then $A$ is finitely generated as a quotient of $C$. 
Hence by Lemma~\ref{lgf2}(ii), $A$ is
a factorial ring and so is $C$ as a polynomial ring over $A$. As a result, $C$ is normal
so that $\es S{{\goth g}^{e}}[t]$ and $C$ satisfy the conditions (1), (2), (3) of 
Proposition~\ref{p2gf1}. Hence by Proposition~\ref{p2gf1}, for all prime ideal 
${\goth p}$ of $A$, containing $t$, $C_{\overline{{\goth p}}}$ is Cohen-Macaulay, whence
$A_{{\goth p}}$ is Cohen-Macaulay. By Lemma~\ref{lsg1}(ii), for ${\goth p}$ a prime 
ideal of $A$, not containing $t$, $A_{{\goth p}}$ is the localization of 
$\k[t,t^{-1},\poi a1{,\ldots,}{\rg}{}{}{}]$ at the prime ideal generated by ${\goth p}$.
Therefore $A_{{\goth p}}$ is Cohen-Macaulay since the algebra 
$\k[t,t^{-1},\poi a1{,\ldots,}{\rg}{}{}{}]$ is regular. As a result $A$ is Cohen-Macaulay.
In particular, $A$ satisfies the conditions (1), (2), (3) of Subsection~\ref{gf2}. So, by 
Proposition~\ref{pgf2}, $A_{*}$ is a polynomial algebra 
over $R_*$. Then by Corollary~\ref{csg1}, 
for some homogeneous generating sequence $\poi q1{,\ldots,}{\rg}{}{}{}$ in 
$\ai g{}{}$, 
$$ A_{*} = R_{*}[t^{- 2 d _{1}}\tau \rond \kappa (q_{1}),\ldots,
t^{- 2 d_{\rg}}\tau \rond \kappa (q_{\rg})].$$
Form the above equality, we deduce that any element of $A$ is the product of an element 
of the algebra 
$R[t^{- 2 d _{1}}\tau \rond \kappa (q_{1}),\ldots,
t^{- 2 d_{\rg}}\tau \rond \kappa (q_{\rg})]$ 
by a polynomial in $t$ with nonzero constant term, whence 
$$ A= R[t^{- 2 d _{1}}\tau \rond \kappa (q_{1}),\ldots,
t^{- 2 d_{\rg}}\tau \rond \kappa (q_{\rg})] 
\quad \text{ and so }\quad  
\varepsilon (A) = \k[\poi {\ie q}1{,\ldots,}{\rg}{}{}{}]$$   
since for $i=1,\ldots,\rg$, 
$$\ie q_{i}:=\varepsilon (t^{- 2 d _{i}}\tau \rond \kappa (q_{i})) .$$ 
Since ${\cal N}\times \{0\}$ is the nullvariety of $t$ and $A_{+}$ in 
${\goth g}^{f}\times \k$, ${\cal N}$ is the nullvariety in ${\goth g}^{f}$ of 
$\poi {\ie q}1{,\ldots,}{\rg}{}{}{}$. Hence $\poi {\ie q}1{,\ldots,}{\rg}{}{}{}$ are 
algebraically independent over $\k$ since ${\cal N}$ has dimension $r-\rg$.
\end{proof}

\section{Proof of Theorem \ref{ti5}} \label{proof}
Let $(e,h,f)$ be an ${\goth {sl}}_{2}$-triple in ${\goth g}$. 
We use the notations $\kappa $ and $\ie p$, $p\in \ai g{}{}$, 
as in the introduction. 
In this section, we use the standard gradings on $\e Sg$ and $\es S{{\goth g}^{e}}$. 
Let $A_0$ be the subalgebra of $\es S{{\goth g}^{e}}$ generated by the family
$\ie p, \; p \in \ai g{}{}$, and let ${\cal N}_0$ be the nullvariety of $A_{0,+}$ in 
${\goth g}^{f}$ where $A_{0,+}$ denotes the augmentation ideal of $A_0$. 

Let $\poi a1{,\ldots,}{m}{}{}{}$ be a homogeneous sequence in $A_{0,+}$ generating the
ideal of $\es S{{\goth g}^{e}}$ generated by $A_{0,+}$. According 
to~\cite[Corollary 2.3]{PPY}, $A_0$ contains homogeneous elements 
$\poi b1{,\ldots,}{\rg}{}{}{}$ algebraically independent over $\k$. 

\begin{lemma}\label{lproof}
Let ${\goth A}$ be the integral closure of 
$\k[\poi a1{,\ldots,}{m}{}{}{},\poi b1{,\ldots,}{\rg}{}{}{}]$ in the fraction field of 
$\es S{{\goth g}^{e}}$.

\begin{enumerate}
\item[{\rm (i)}] The algebra ${\goth A}$ is contained in $\ai ge{}$ and its fraction field
is the fraction field of $\ai ge{}$.
\item[{\rm (ii)}] Let $a$ in $\ai ge{}_{+}$. If $a$ is equal to $0$ on ${\cal N}_0$, then
$a$ is in ${\goth A}_{+}$.
\item[{\rm (iii)}] The algebra ${\goth A}$ is the integral closure of $A_0$ in 
the fraction field of $\es S{{\goth g}^{e}}$.
\end{enumerate}
\end{lemma}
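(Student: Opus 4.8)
The plan is to prove (i), then reduce (iii) to (ii), and to locate the real difficulty in (ii).

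For (i) I would argue as follows. First, every element of ${\goth A}$ is integral over the subalgebra $B:=\k[\poi a1{,\ldots,}{m}{}{}{},\poi b1{,\ldots,}{\rg}{}{}{}]$ of $\es S{{\goth g}^{e}}$, hence integral over the normal ring $\es S{{\goth g}^{e}}$, hence lies in it; thus ${\goth A}\subseteq\es S{{\goth g}^{e}}$. To get ${\goth A}\subseteq\ai ge{}$, take $x\in{\goth A}$ with $x^{d}+c_{d-1}x^{d-1}+\cdots +c_{0}=0$ and $c_{i}\in B\subseteq\ai ge{}$. As the $c_{i}$ are $G^{e}_{0}$-invariant, every $g\cdot x$ ($g\in G^{e}_{0}$) is a root of this same degree-$d$ polynomial in the domain $\es S{{\goth g}^{e}}$, so the orbit $G^{e}_{0}\cdot x$ is finite; being the image of the connected group $G^{e}_{0}$ it is irreducible, hence reduced to a point, so $x\in\ai ge{}$. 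For the fraction fields: $\poi b1{,\ldots,}{\rg}{}{}{}\in{\goth A}$ are algebraically independent and ${\goth A}\subseteq\ai ge{}$, whose fraction field has transcendence degree $\rg$ over $\k$ by the main result of \cite{CM}; hence $K(\ai ge{})$ is algebraic over $K(B)$ and, lying in $K(\es S{{\goth g}^{e}})$, is contained in the algebraic closure of $K(B)$ in $K(\es S{{\goth g}^{e}})$, which is precisely $K({\goth A})$. Together with ${\goth A}\subseteq\ai ge{}$ this yields $K({\goth A})=K(\ai ge{})$.

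Next I would set up (ii). Since $K({\goth A})=K(\ai ge{})$ is a finite extension of $K(B)$ and $B$ is a finitely generated domain over a field of characteristic zero, ${\goth A}$ is the integral closure of $B$ in this finite extension, hence a finite $B$-module; moreover ${\goth A}$ is graded and normal, and ${\goth A}^{[0]}=\k$, so it suffices to treat homogeneous $a$ of positive degree. The hypothesis together with the fact that ${\cal N}_{0}$ is the nullvariety of $\poi a1{,\ldots,}{m}{}{}{}$ give, by the Nullstellensatz, $a^{N}\in(\poi a1{,\ldots,}{m}{}{}{})\es S{{\goth g}^{e}}$ for some $N$. A homogeneity argument on integral equations shows that every element of ${\goth A}_{+}$ already lies in $\sqrt{(\poi a1{,\ldots,}{m}{}{}{})}$; consequently the morphism $\psi\colon{\goth g}^{f}\to\mathrm{Specm}({\goth A})$ dual to ${\goth A}\hookrightarrow\es S{{\goth g}^{e}}$ contracts ${\cal N}_{0}$ to the vertex $z_{0}$ of the cone $\mathrm{Specm}({\goth A})$, and in fact ${\cal N}_{0}=\psi^{-1}(z_{0})$. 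Since ${\goth A}$ is normal, $a\in{\goth A}$ is equivalent to $v_{P}(a)\geq 0$ for every graded height-one prime $P$ of ${\goth A}$, and every such $P$ is contained in ${\goth A}_{+}$, so the divisor $V(P)$ passes through $z_{0}$.

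The key point, and the step I expect to be the main obstacle, is to rule out $v_{P}(a)<0$. Because $a$ is a polynomial on ${\goth g}^{f}$ and $\es S{{\goth g}^{e}}$ is regular, $v_{Q}(a)\geq 0$ for every height-one prime $Q$ of $\es S{{\goth g}^{e}}$, so a pole of $a$ can only occur along a $P$ whose divisor $V(P)$ is contracted by $\psi$. The strategy is to extend $v_{P}$ to a valuation $w$ of $K(\es S{{\goth g}^{e}})$ having a centre on $\es S{{\goth g}^{e}}$ that lies inside the special fibre ${\cal N}_{0}$: such a $w$ exists because $P\es S{{\goth g}^{e}}$ is a proper ideal (its nullvariety contains ${\cal N}_{0}\ni 0$), and its centre can be forced into ${\cal N}_{0}$ since $V(P)$ specialises to $z_{0}$. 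For this $w$ one has $w(a)\geq 0$ because $a\in\es S{{\goth g}^{e}}$, while $w|_{K({\goth A})}$ is a positive multiple of $v_{P}$, giving $w(a)<0$ — a contradiction. Thus no such pole exists and $a\in{\goth A}_{+}$. Making the prescribed-centre valuation extension precise is the technical heart; the vanishing of $a$ on ${\cal N}_{0}$ is indispensable, since without it an invariant need not be integral over $A_{0}$ and genuinely acquires poles on $\mathrm{Specm}({\goth A})$.

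Finally (iii) follows formally. Each generator $\ie p$ of $A_{0}$ lies in $A_{0,+}\subseteq\ai ge{}_{+}$ and, by definition of ${\cal N}_{0}$ as the nullvariety of $A_{0,+}$, vanishes on ${\cal N}_{0}$; hence $\ie p\in{\goth A}$ by (ii), and therefore $A_{0}\subseteq{\goth A}$. Since $B\subseteq A_{0}\subseteq{\goth A}$, the ring ${\goth A}$ is integral over $A_{0}$, and being integrally closed in $K(\es S{{\goth g}^{e}})$ it is exactly the integral closure of $A_{0}$ there.
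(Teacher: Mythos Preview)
Your treatment of (i) and (iii) is correct and essentially parallel to the paper's: for (i) the paper invokes that $\ai ge{}$ is integrally closed in the field $K_{0}$ of invariant rational functions, which amounts to your finite-orbit/connectedness argument, and (iii) follows formally from (ii) and the inclusions $B\subseteq A_{0}$ in both treatments.

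The gap is in (ii). Your valuation scheme asks, for each graded height-one prime $P$ of ${\goth A}$, for a valuation $w$ on $K(\es S{{\goth g}^{e}})$ with $\es S{{\goth g}^{e}}\subset {\cal O}_{w}$ and $w\vert_{K({\goth A})}$ equivalent to $v_{P}$. These two requirements force the centre of $w$ on ${\rm Spec}({\goth A})$ to be $P$; hence the centre on ${\goth g}^{f}$ lies over $P$, not over ${\goth A}_{+}$, and cannot be ``forced into ${\cal N}_{0}$'' unless $P={\goth A}_{+}$. More seriously, the inequality $w(a)\geq 0$ you invoke comes only from $a\in\es S{{\goth g}^{e}}$ and uses nothing about the vanishing of $a$ on ${\cal N}_{0}$. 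If such a $w$ existed for every $P$, your argument would show $\ai ge{}\subseteq{\goth A}$ for \emph{every} homogeneous $a\in\ai ge{}_{+}$, i.e.\ unconditionally---and this is false in general (indeed, Corollary~\ref{cproof} needs the extra hypothesis that $\ai ge{}$ is finitely generated precisely because the inclusion can fail otherwise). So the step you flag as ``the technical heart'' is not just imprecise: as formulated it would prove too much, and the hypothesis you correctly call indispensable never enters the mechanism.

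The paper's proof of (ii) takes a different route that makes the role of the vanishing hypothesis transparent. One looks at the birational morphism $\beta\colon{\rm Specm}({\goth A}[a])\to{\rm Specm}({\goth A})$. Because $a$ is homogeneous and vanishes on ${\cal N}_{0}$, the fibre $\beta^{-1}({\goth A}_{+})$ reduces to the vertex ${\goth A}[a]_{+}$; the $\gm$-equivariance of $\beta$ then forces all fibres to be finite, so $\beta$ is quasi-finite. Zariski's Main Theorem makes $\beta$ an open immersion, and since its image contains the vertex---which lies in the closure of every $\gm$-orbit---$\beta$ is an isomorphism, whence $a\in{\goth A}$. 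This is exactly the pattern of Proposition~\ref{pgf1}, and it is where the hypothesis on $a$ is consumed.
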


\begin{proof}
(i) Let $K_{0}$ be the field of invariant elements under the adjoint action of 
${\goth g}^{e}$ in the fraction field of $\es S{{\goth g}^{e}}$. According to
\cite[Lemma 3.1]{CM1}, $K_{0}$ is the fraction field of $\ai ge{}$. Since 
$\poi a1{,\ldots,}{m}{}{}{},\poi b1{,\ldots,}{\rg}{}{}{}$ are in $\ai ge{}$, 
${\goth A}$ is contained in $K_{0}$. Moreover, ${\goth A}$ is contained in $\ai ge{}$ 
since $\ai ge{}$ is integrally closed in $K_{0}$. Since $K_{0}$ has transcendence  
degree $\rg$ over $\k$ and since $\poi b1{,\ldots,}{\rg}{}{}{}$ are algebraically 
independent over $\k$, $K_{0}$ is the fraction field of ${\goth A}$.

(ii) Since ${\cal N}_0$ is the nullvariety of 
$\poi a1{,\ldots,}{m}{}{}{},\poi b1{,\ldots,}{\rg}{}{}{}$ in ${\goth g}^{f}$, 
${\cal N}_0$ is the nullvariety of ${\goth A}_{+}$ in ${\goth g}^{f}$. Let $a$ be in 
$\ai ge{}_{+}$ such that $a$ is equal to $0$ on ${\cal N}_0$. Since ${\cal N}_0$
is a cone, all homgogeneous components of $a$ is equal to $0$ on ${\cal N}_0$. So it
suffices to prove the assertion for $a$ homogeneous. 
We have a commutative diagram
$$ \xymatrix{ {\goth g}^{f} \ar[rr]^{\pi } \ar[rd]_{\alpha } && 
{\mathrm {Specm}}({\goth A}[a]) \ar[ld]^{\beta } \\ & {\mathrm {Specm}}({\goth A}) & }$$
with $\pi $, $\alpha $, $\beta $ the comorphisms of the canonical injections
$$ {\goth A}[a] \hookrightarrow \es S{{\goth g}^{e}}, \quad
{\goth A} \hookrightarrow \es S{{\goth g}^{e}}, \quad 
{\goth A} \hookrightarrow {\goth A}[a] .$$
Since ${\cal N}_0$ is the nullvariety of ${\goth A}[a]_{+}$ and ${\goth A}_{+}$ in 
${\goth g}^{f}$, $\beta ^{-1}({\goth A}_{+})={\goth A}[a]_{+}$. The gradings of 
${\goth A}$ and ${\goth A}[a]$ induce actions of $\gm$ on 
${\mathrm {Specm}}({\goth A})$ 
and ${\mathrm {Specm}}({\goth A}[a])$ such that $\beta $ is equivariant. Moreover, 
${\goth A}_{+}$ is in the closure of all orbit under $\gm$ in 
${\mathrm {Specm}}({\goth A})$. Hence $\beta $ is a quasi finite morphism. 
Moreover, $\beta $ is a birational since ${\goth A}$ and ${\goth A}[a]$ have the same 
fraction field by (i). 
Hence, by Zariski's main theorem \cite{Mu}, $\beta $ is an open immersion 
from ${\mathrm {Specm}}({\goth A}[a])$ into ${\mathrm {Specm}}({\goth A})$. So, 
$\beta $ is surjective since ${\goth A}_{+}$ is in the image of $\beta $ and since it is 
in the closure of all $\gm$-orbit in ${\mathrm {Specm}}({\goth A})$. As a 
result, $\beta $ is an isomorphism and $a$ is in ${\goth A}$, whence the assertion.

(iii) By (ii), $A_0$ is contained in ${\goth A}$. Moreover, since 
$\poi a1{,\ldots,}{m}{}{}{},\poi b1{,\ldots,}{\rg}{}{}{}$ are in $A_0$, 
${\goth A}$ is contained in the integral closure of $A_0$ in the 
fraction field of $\es S{{\goth g}^{e}}$, whence the assertion.
\end{proof}

\begin{coro}\label{cproof}
Suppose that the algebra $\ai ge{}$ is finitely generated. Then ${\goth A}$ is
equal to $\ai ge{}$.
\end{coro}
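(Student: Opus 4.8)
The plan is to prove the two inclusions ${\goth A}\subseteq \ai ge{}$ and $\ai ge{}\subseteq {\goth A}$ separately. The first is exactly Lemma~\ref{lproof}(i), which moreover gives $K({\goth A})=K(\ai ge{})$; and since ${\goth A}$ is the integral closure of $A_0$ in $K(\es S{\g^e})$ by Lemma~\ref{lproof}(iii), it is integrally closed in its own fraction field $K(\ai ge{})$. Hence, to get the reverse inclusion it suffices to prove that $\ai ge{}$ is a \emph{finite} module over ${\goth A}$: an element of $\ai ge{}\subseteq K({\goth A})$ that is integral over the integrally closed ring ${\goth A}$ already lies in ${\goth A}$, so finiteness forces $\ai ge{}={\goth A}$.

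To prove finiteness I would apply Proposition~\ref{pgf1} with $R=\k$, $A=\ai ge{}$ and $B={\goth A}$. The standing hypotheses of Section~\ref{gf} hold for $\ai ge{}$: it is a graded integral domain with $\ai ge{}^{[0]}=\k$ and finite-dimensional graded pieces, and it is finitely generated by assumption. Thus Proposition~\ref{pgf1} reduces everything to the single condition that $\ai ge{}_+$ be the radical of $\ai ge{}\,{\goth A}_+$. Because ${\goth A}$ is integral over $A_0$, a homogeneous $z\in{\goth A}_+$ satisfies a homogeneous integral relation with coefficients in $A_{0,+}$, whence $z^n\in\ai ge{}\,A_{0,+}$ for some $n$; so $\ai ge{}\,{\goth A}_+$ and $\ai ge{}\,A_{0,+}$ have the same radical. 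Pulling back along the dominant map $\g^f\to \mathrm{Specm}(\ai ge{})$ attached to $\ai ge{}\subseteq \es S{\g^e}$, the required condition becomes the equality ${\cal N}_0={\cal N}'$, where ${\cal N}'$ is the nullvariety of $\ai ge{}_+$ in $\g^f$. Since $A_0\subseteq \ai ge{}$ one always has ${\cal N}'\subseteq {\cal N}_0$, so everything comes down to the inclusion ${\cal N}_0\subseteq {\cal N}'$, i.e. to the statement that every homogeneous invariant in $\ai ge{}_+$ vanishes on ${\cal N}_0$. (Granting this, one could alternatively conclude by applying Lemma~\ref{lproof}(ii) to a finite generating set of $\ai ge{}$.)

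For the surrounding dimension count I would use that $A_0=\varepsilon(A)$, both being generated over $\k$ by the initial components $\ie p$, $p\in\ai g{}{}$, so that ${\cal N}_0$ is the variety ${\cal N}$ of Subsection~\ref{sg2}. Lemma~\ref{l2sg2}(i) then shows every component of ${\cal N}_0$ has dimension at least $r-\rg$, while the equidimensional family ${\cal V}_*$ of Lemma~\ref{lsg2}, whose fibres over $t\neq 0$ are copies of the $(r-\rg)$-dimensional intersection of the nilpotent cone with the Slodowy slice, pins $\dim {\cal N}_0$ to $r-\rg$; and $\ai ge{}$ being finitely generated of dimension $\rg$ forces $\dim {\cal N}'\geq r-\rg$ by the fibre-dimension theorem. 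To upgrade this numerical coincidence into the set-theoretic inclusion ${\cal N}_0\subseteq {\cal N}'$, I would run the $\gm$-equivariant argument already used in Lemma~\ref{lproof}(ii) and Proposition~\ref{pgf1}: the inclusion ${\goth A}\hookrightarrow \ai ge{}$ induces a birational (Lemma~\ref{lproof}(i)), $\gm$-equivariant morphism $\beta\colon \mathrm{Specm}(\ai ge{})\to \mathrm{Specm}({\goth A})$ carrying cone point to cone point; once $\beta$ is quasi-finite over the cone point, Zariski's main theorem~\cite{Mu} makes it an open immersion, and $\gm$-equivariance (every orbit closure meeting the cone point) makes it surjective, hence an isomorphism, which is exactly ${\cal N}_0={\cal N}'$.

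The hard part will be precisely this quasi-finiteness of $\beta$ over the cone point, equivalently the inclusion ${\cal N}_0\subseteq {\cal N}'$. The mere equality $\dim {\cal N}_0=\dim {\cal N}'=r-\rg$ is not enough, because a priori $\beta$ could contract an extra $(r-\rg)$-dimensional component of ${\cal N}_0$ lying outside ${\cal N}'$; indeed, without finite generation $\ai ge{}$ need not be a finite ${\goth A}$-module at all. Thus the decisive point is to make the finite generation of $\ai ge{}$ interact with the geometry of the initial symbols $\ie p$ cutting out ${\cal N}_0$, so as to rule out such contracted components; this is the crux of the proof, and the remaining steps are formal once it is in place.
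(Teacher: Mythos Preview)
Your overall strategy matches the paper's: ${\goth A}\subseteq\ai ge{}$ with equal fraction fields by Lemma~\ref{lproof}(i), ${\goth A}$ is integrally closed by Lemma~\ref{lproof}(iii), so it suffices to show that $\ai ge{}$ is a finite ${\goth A}$-module, and you correctly isolate the crux as the finiteness of $\beta^{-1}({\goth A}_+)$ in ${\rm Specm}(\ai ge{})$. But your attack on this crux has a genuine gap. First, the equality $A_0=\varepsilon(A)$ is not available here: only the inclusion $A_0\subseteq\varepsilon(A)$ holds a priori (cf.\ Lemma~\ref{lsg1}(v) and the proof of Theorem~\ref{ti5}), and the reverse inclusion comes out of Theorem~\ref{ti6}, which in the paper's logic is applied \emph{after} the present corollary. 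Second, and more seriously, nothing in the hypotheses bounds $\dim{\cal N}_0$ from above: Lemma~\ref{lsg2} controls only ${\cal V}_*$, not the possible extra components of ${\cal V}$ sitting over $t=0$, so your claim that ${\cal V}_*$ ``pins $\dim{\cal N}_0$ to $r-\rg$'' is unjustified. In fact the equality ${\cal N}_0={\cal N}'$ is obtained in the proof of Theorem~\ref{ti5} \emph{as a consequence} of the present corollary together with goodness of $e$; invoking it here is circular. You yourself observe that even equality of dimensions would not give the set-theoretic inclusion ${\cal N}_0\subseteq{\cal N}'$, and you provide no further mechanism.

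The paper resolves the crux by a purely algebraic height argument inside $\ai ge{}$, making no reference to ${\goth g}^{f}$ or to any dimension hypothesis on ${\cal N}_0$. Since ${\goth A}$ and $\ai ge{}$ share a fraction field they share Krull dimension $d$, and ${\goth A}_+$ has height $d$ in ${\goth A}$. For each minimal prime ${\goth p}'_{j}$ of $\ai ge{}$ over $\ai ge{}{\goth A}_+$ one builds (Claim~\ref{clproof}) a strict chain $\{0\}={\goth q}_0\subsetneq\cdots\subsetneq{\goth q}_d\subseteq{\goth p}'_{j}$ of primes of $\ai ge{}$: at step $i$ one picks $c_i\in{\goth A}_+\setminus{\goth q}_{i-1}$, which is possible because the contracted chain ${\goth A}\cap{\goth q}_0\subsetneq\cdots\subsetneq{\goth A}\cap{\goth q}_{i-1}$ in ${\goth A}$ has length $i-1<d=\mathrm{ht}\,{\goth A}_+$, and one lets ${\goth q}_i\subseteq{\goth p}'_{j}$ be a minimal prime over ${\goth q}_{i-1}+(c_i)$. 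Hence every such ${\goth p}'_{j}$ is maximal, so $C:=\ai ge{}/\ai ge{}{\goth A}_+$ is a zero-dimensional finitely generated $\k$-algebra, hence finite-dimensional; a graded complement $V$ to $\ai ge{}{\goth A}_+$ then gives $\ai ge{}=V{\goth A}$, and one concludes exactly as you outlined.
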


\begin{proof}
Let $C$ be the quotient of $\ai ge{}$ by the ideal $\ai ge{}{\goth A}_{+}$. 
By hypothesis, $C$ is finitely generated. Then it has finitely many minimal prime
ideals. Denote them by $\poi {{\goth p}}1{,\ldots,}{m}{}{}{}$. For 
$a$ in the radical of $\ai ge{}{\goth A}_{+}$, $a$ is equal to $0$ on ${\cal N}_0$. 
Moreover, it is in $\ai ge{}_{+}$. Then, by Lemma~\ref{lproof}(ii), $a$ is in 
${\goth A}_{+}$. As a result, $C$ is a reduced algebra and the canonical map
$$ C \longrightarrow \poi {C/{\goth p}}1{\times \cdots \times }{m}{}{}{}$$
is injective. Since ${\goth A}$ and $\ai ge{}$ have the same fraction field, they have 
the same Krull dimension. Denote by $d$ this dimension 
and by ${\goth p}'_{j}$, for $j=1,\ldots,m$, the 
inverse image of ${\goth p}_{j}$ in $\ai ge{}$ by the quotient map 
$\ai ge{} \to C$. 

\begin{claim}\label{clproof}
Let $j=1,\ldots,m$. For $i=1,\ldots,d$, there exists a sequence 
$\poi c1{,\ldots,}{i}{}{}{}$ of elements of ${\goth A}_{+}$ and an increasing sequence
$$ \{0\} = \poi {{\goth q}}0{ \subsetneq \cdots \subsetneq }{i}{}{}{} 
\subset {\goth p}'_{j}$$ 
of prime ideals of of $\ai ge{}$ such that $c_{i}$ is not ${\goth q}_{i-1}$ and 
$\poi c1{,\ldots,}{j}{}{}{}$ are in ${\goth q}_{j}$ for $j=1,\ldots,i$.
\end{claim}

\begin{proof}[Proof of Claim~\ref{clproof}] 
Prove the claim by induction on $i$. Let $c_{1}$ be in ${\goth A}_{+}\setminus \{0\}$. 
As ${\goth A}_{+}$ is contained in ${\goth p}'_{j}$, there exists a minimal prime ideal 
${\goth q}_{1}$ of $\ai ge{}$, contained in ${\goth p}'_{j}$ and containing $c_{1}$. 
Suppose $i>1$ and the claim true for $i-1$. As the sequence 
$$ \{0\} = \poi {{\goth A}_{+}\cap {\goth q}}1{\subsetneq \cdots \subsetneq}{i-1}{}{}{}
\subset {\goth A}_{+}$$
is an increasing sequence of prime ideals of ${\goth A}_{+}$ and ${\goth A}_{+}$ has 
height $d>i-1$, ${\goth A}_{+}$ is not contained in ${\goth q}_{i-1}$. Let $c_{i}$ be
in ${\goth A}_{+}\setminus {\goth q}_{i-1}$ and ${\goth q}_{i}$ the minimal prime 
ideal of $\ai ge{}$ contained in ${\goth p}'_{j}$ and containing $c_{i}$ and 
${\goth q}_{i-1}$. So by the induction hypothesis, the sequence 
$c_1,\ldots,c_i$ satisfies the conditions of the claim. This concludes the proof. 
\end{proof}

By the claim, ${\goth p}'_{j}$ has height at least $d$ for $j=1,\ldots,m$. 
Hence 
$\poi {{\goth p}}1{,\ldots,}{m}{}{}{}$ are maximal ideals of $C$. As a result, 
the $\k$-algebra $C$ is finite dimensional. Let $V$ be a graded complement to
$\ai ge{}{\goth A}_{+}$ in $\es S{{\goth g}^{e}}$. 
From the equality ${\rm S}(\g^{e}) = V + {\rm S}(\g^{e})^{\g^{e}} {\goth A}_{+}$, 
we get that ${\rm S}(\g^{e}) = V{\goth A} + 
{\rm S}(\g^{e})^{\g^{e}}{\goth A}_{+} ^{m}$ for any nonnegative 
integer $m$ by induction on $m$. 
Hence $\ai ge{}=V{\goth A}$ so that $\ai ge{}$ is a finite extension of ${\goth A}$. 
Since ${\goth A}$ is integrally closed in the fraction field of $\ai ge{}$, 
${\goth A}=\ai ge{}$.  
\end{proof}

\begin{proof}[Proof of Theorem \ref{ti5}]
The ``if'' part results from \cite[Theorem 1.5]{CM1} (or, here, Theorem \ref{ti4}). 

Suppose that $e$ is good. By Definition \ref{di} and Theorem \ref{ti3}, 
$\ai ge{}$ is a polynomial algebra and the nullvariety of $\ai ge{}_{+}$ in 
${\goth g}^{f}$ is equidimensional of dimension $r-\rg$. On the other hand, 
by Lemma~\ref{lproof}(iii), $\mathfrak{A}$ is the integral closure of $A_0$ in the 
fraction field of ${\rm S}(\g^{e})$. Hence the nullvarieties of $\mathfrak{A}_+$ and 
$A_{0,+}$ in $\g^{f}$ are the same.  But by~Corollary~\ref{cproof}, 
$\mathfrak{A}=\ai ge{}$, so ${\cal N}_0$ has dimension $r-\rg$ since $e$ is good. 
On the other hand, $A_0$ is contained in $\varepsilon(A)$ by construction of 
$\varepsilon(A)$, and $\varepsilon(A)$ is contained in ${\rm S}(\g^{e})^{\g^{e}}$ by 
\cite[Proposition 0.1]{PPY}, whence ${\cal N}= {\cal N} _0$.

As a result, ${\cal N}$ has dimension $r-\ell$ and so by Theorem~\ref{ti6}, for some 
homogeneous generating sequence $\poi q1{,\ldots,}{\rg}{}{}{}$ of $\ai g{}{}$, the 
element $\poi {\ie q}1{,\ldots,}{\rg}{}{}{}$ are algebraically independent over $\k$.
\end{proof}


\begin{thebibliography}{DK00}
\bibitem[B98]{Bou} N.~Bourbaki, 
{\em Alg\`ebre commutative, Chapitre 10, \'El\'ements de math\'ematiques}, 
Masson (1998), Paris.

\bibitem[CM10]{CM} J.-Y.~Charbonnel and A.~Moreau, 
{\em The index of centralizers of elements of reductive 
Lie algebras}, Documenta Mathematica, {\bf 15} (2010), 387-421. 

\bibitem[CM16]{CM1} J.-Y.~Charbonnel and A.~Moreau, 
{\em The symmetric invariants of centralizers and Slodowy grading}, 
Math. Zeitschrift {\bf 282} (2016), {\bf n$^{\circ}$1-2}, 273-339. 

\bibitem[JS10]{JS} A.~Joseph and D.~Shafrir, 
{\em Polynomiality of invariants, unimodularity and adapted pairs}, Transformation 
Groups, {\bf 15}, (2010), 851--882.

\bibitem[Ma86]{Mat} H.~Matsumura, 
{\em Commutative ring theory} 
Cambridge studies in advanced mathematics (1986), {\bf n$^{\circ}$8}, Cambridge
University Press, Cambridge, London, New York, New Rochelle, Melbourne,
Sydney.

\bibitem[Mu88]{Mu} D.~Mumford, 
{\em The Red Book of Varieties and Schemes}, Lecture Notes
in Mathematics (1988), {\bf n$^{\circ}$ 1358}, Springer-Verlag, Berlin,
Heidelberg, New York, London, Paris, Tokyo.

\bibitem[PPY07]{PPY} D.I.~Panyushev, A.~Premet and O.~Yakimova, 
{\em On symmetric invariants of centralizers in reductive Lie algebras}, 
Journal of Algebra {\bf 313} (2007), 343--391.

\bibitem[Y07]{Y3} O.~Yakimova, 
{\em A counterexample to Premet's and Joseph's conjecture},
Bulletin of the  London Mathematical Society {\bf 39} (2007), 749--754.

\bibitem[Y16]{Y4} O.~Yakimova, 
{\em Symmetric invariants of $\mathbb{Z}_2$-contractions and other semi-direct products}, 
preprint 2016. 
\end{thebibliography}
\end{document}